\let\Gamma=\varGamma
\let\Omega=\varOmega
\let\Sigma=\varSigma
\newlength{\short}
\newlength{\shorter}
\newcommand{\trs}[1]{\mathfrak{tr}_{#1}}
\newcommand{\trsh}[2][]{\trs{#1h_{#2}}}
\newcommand{\Grf}{\mathfrak{Gr}}
\newcommand{\grf}[1]{\mathfrak{gr}_{#1}}
\newcommand{\grfh}[2][]{\grf{#1h_{#2}}}
\newcommand{\ttt}{\mbf{\tau}}
\newcommand{\Mat}[2]{\text{\boldd{$M_{#1#2}$}}}
\newcommand{\dTodd}[2]{\text{\boldd{$\textit{Gol}_{#1#2}$}}}
\newcommand{\LL}[1]{\def\test{#1}\def\tst{2}\textit{\textbf{L}}%
\ifx\test\tst\else^{(1#1)}\fi}
\newcommand{\NN}[1]{\def\test{#1}\def\tst{2}\textit{\textbf{N}}%
\ifx\test\tst\else^{(1#1)}\fi}
\def\Qtrp[#1,#2,#3]{{\ll}{#1},#2,#3{\gg}}  
\def\trp[#1,#2,#3]{{[\![}#1,#2,#3{]\!]}}
\definecolor{darkgreen}{rgb}{0,0.5,0}
\definecolor{bluegreen}{rgb}{0,0.2,0.8}
\definecolor{darkred}{rgb}{0.8,0,0}
\definecolor{newercolor}{rgb}{0.2,0,1}
\definecolor{darkyellow}{rgb}{0.7,0.7,0}
\definecolor{orange}{rgb}{0.9,0.4,0}
\newcommand{\bmid}{\mathrel{\big|}}
\let\Gamma=\varGamma
\let\Omega=\varOmega
\newcommand{\EE}[2][]{\mathbf{E}_{#2}}
\newcommand{\0}[1]{\textup{\textbf{\uppercase{#1}}}}
\newcommand{\3}[1]{\textup{\textbf{3\uppercase{#1}}}}
\newcommand{\4}[1]{\overline{#1}}   
\newcommand{\5}[1]{\widehat{#1}}
\newcommand{\too}{\longrightarrow}
\newcommand{\boldd}[1]{{\mathversion{bold}\textbf{#1}}}
\newcommand{\mbf}[1]{\text{\boldmath $#1$\unboldmath}}
\newcommand{\lie}[3]{\def\test{#2}\def\tst{G}\ifx\test\tst{{}^{#1}#2_{#3}}
\else{{}^{#1}\!#2_{#3}}\fi}
\let\oldcirc=\circ
\renewcommand{\circ}{\mathchoice
    {\mathbin{\scriptstyle\oldcirc}}{\mathbin{\scriptstyle\oldcirc}}
    {\mathbin{\scriptscriptstyle\oldcirc}}
    {\mathbin{\scriptscriptstyle\oldcirc}}}
\newlength{\upto}\newlength{\dnto}
\numberwithin{equation}{section}
\def\beq#1\eeq{\begin{equation*}#1\end{equation*}}
\def\beqq#1\eeqq{\begin{equation}#1\end{equation}}
\let\emptyset=\varnothing
\renewcommand{\:}{\colon}   
\newcommand{\longline}{\bigskip\centerline{\hbox to 5cm{\hrulefill}}\bigskip}
\newcommand{\mxtwo}[4]{\left(\begin{smallmatrix}#1&#2\\#3&#4
\end{smallmatrix}\right)}
\newcommand{\mxthree}[9]{\left(\begin{smallmatrix}#1&#2&#3\\#4&#5&#6\\
#7&#8&#9\end{smallmatrix}\right)}
\newcommand{\Mxtwo}[4]{\begin{pmatrix}#1&#2\\#3&#4\end{pmatrix}}
\newcommand{\coltwo}[2]{\left(\begin{smallmatrix}#1\\#2
\end{smallmatrix}\right)}
\newcommand{\colthree}[3]{\left(\begin{smallmatrix}#1\\#2\\#3
\end{smallmatrix}\right)}
\newcommand{\Coltwo}[2]{\begin{pmatrix}#1\\#2\end{pmatrix}}
\newcommand{\Colthree}[3]{\begin{pmatrix}#1\\#2\\#3\end{pmatrix}}
\newcommand{\mxfoura}[8]{\left(\begin{smallmatrix}#1&#2&#3&#4\\#5&#6&#7&#8\\}
\newcommand{\mxfourb}[8]{#1&#2&#3&#4\\#5&#6&#7&#8\end{smallmatrix}\right)}
\DeclareMathAlphabet\EuR{U}{eur}{m}{n}
\SetMathAlphabet\EuR{bold}{U}{eur}{b}{n}
\newcommand{\higherlim}[2]{\displaystyle\setbox1=\hbox{\rm lim}
	\setbox2=\hbox to \wd1{\leftarrowfill} \ht2=0pt \dp2=-1pt
	\setbox3=\hbox{$\scriptstyle{#1}$}
	\def\test{#1}\ifx\test\empty
	\mathop{\mathop{\vtop{\baselineskip=5pt\box1\box2}}}\nolimits^{#2}
	\else
	\ifdim\wd1<\wd3
	\mathop{\hphantom{^{#2}}\vtop{\baselineskip=5pt\box1\box2}^{#2}}_{#1}
	\else
	\mathop{\mathop{\vtop{\baselineskip=5pt\box1\box2}}_{#1}}%
	\nolimits^{#2}
	\fi\fi}
\newcommand{\higherlimm}[2]{\setbox1=\hbox{\rm lim}
	\setbox2=\hbox to \wd1{\leftarrowfill} \ht2=0pt \dp2=-1pt
	\mathop{\mathop{\vtop{\baselineskip=5pt\box1\box2}}}\limits_{#1}
	\nolimits^{#2}}
\newcounter{let} \setcounter{let}{0}
\loop\stepcounter{let}
\edef\csname cal\alph{let}\endcsname%
\loop\stepcounter{let}
\edef\csname scr\alph{let}\endcsname%
\loop\stepcounter{let}
\edef\csname frak\alph{let}\endcsname%
\newcommand{\tdef}[2][]{\expandafter\newcommand\csname#2\endcsname%
{#1\textup{#2}}}
\newcommand{\fdef}[1]{\expandafter\newcommand\csname#1\endcsname%
{\mathfrak{#1}}}
\newcommand{\bbdef}[1]{\expandafter\newcommand%
\csname#1\endcsname{\mathbb{#1}}}
\newcommand{\itdef}[1]{\expandafter\newcommand\csname#1\endcsname%
{\textit{#1}}}
\newcommand{\gee}{\varepsilon}
\newcommand{\gen}[1]{\langle{#1}\rangle}
\newcommand{\Gen}[1]{\bigl\langle{#1}\bigr\rangle}
\let\nsg=\normal
\let\nnsg=\ntrianglelefteq
\newcommand{\syl}[2]{\textup{Syl}_{#1}(#2)}
\newcommand{\sylp}[1]{\syl{p}{#1}}
\newcommand{\autf}[1][]{\Aut_{\calf_{#1}}}
\newcommand{\outf}[1][]{\Out_{\calf_{#1}}}
\newcommand{\homf}[1][]{\Hom_{\calf_{#1}}}
\newcommand{\isof}[1][]{\Iso_{\calf_{#1}}}
\newcommand{\sminus}{\smallsetminus}
\newcommand{\defeq}{\overset{\textup{def}}{=}}
\newcommand{\longleft}[1]{\;{\leftarrow%
\count255=0 \loop \mathrel{\mkern-6mu}%
    \relbar\advance\count255 by1\ifnum\count255<#1\repeat}\;}
\newcommand{\longright}[1]{\;{\count255=0 \loop \relbar\mathrel{\mkern-6mu}%
    \advance\count255 by1\ifnum\count255<#1\repeat\rightarrow}\;}
\newcommand{\Right}[2]{\overset{#2}{\longright#1}}
\newcommand{\RIGHT}[3]{\mathrel{\mathop{\kern0pt\longright#1}
	\limits^{#2}_{#3}}}
\newcommand{\LEFT}[3]{\mathrel{\mathop{\kern0pt\longleft#1}\limits^{#2}_{#3}}
}
\newcommand{\longleftright}[1]{\;{\leftarrow\mathrel{\mkern-6mu}%
    \count255=0\loop\relbar\mathrel{\mkern-6mu}%
    \advance\count255 by1\ifnum\count255<#1\repeat\rightarrow}\;} 
\newcommand{\onto}[1]{\;{\count255=0 \loop \relbar\joinrel
    \advance\count255 by1
    \ifnum\count255<#1 \repeat \twoheadrightarrow}\;}
\newcommand{\RLEFT}[3]{\mathrel{%
   \mathop{\vcenter{\baselineskip=0pt\hbox{$\kern0pt\longright#1$}%
   \hbox{$\kern0pt\longleft#1$}}}\limits^{#2}_{#3}}}
\numberwithin{table}{section}
\newenvironment{Table}[1][]{\stepcounter{equation}\begin{table}[#1]}
{\end{table}}
\renewenvironment{enumerate}[1][]
{\begin{enumerat}[#1]\setlength{\itemsep}{6pt}}{\end{enumerat}}
\renewenvironment{itemize}[1][-15]
{\begin{itemiz}\setlength{\itemsep}{6pt}\setlength{\itemindent}{#1pt}}
{\end{itemiz}}
\newenvironment{enuma}{\begin{enumerate}[{\rm(a) }]}{\end{enumerate}}
\newtheorem{Thm}{Theorem}[section]
\newtheorem{Prop}[Thm]{Proposition}
\newtheorem{Cor}[Thm]{Corollary}
\newtheorem{Lem}[Thm]{Lemma}
\newtheorem{Claim}[Thm]{Claim}
\newtheorem{Ass}[Thm]{Assumption}
\newtheorem{Conj}[Thm]{Conjecture}
\newtheorem{Not}[Thm]{Notation}
\newtheorem{Hyp}[Thm]{Hypotheses}
\newtheorem{Thmm}{Theorem}
\theoremstyle{definition}
\newtheorem{Defi}[Thm]{Definition} 
\newtheorem{Rmk}[Thm]{Remark}
\newtheorem{Ex}[Thm]{Example}
\theoremstyle{remark}
\def\Qpr[#1,#2]{[\![#1,#2]\!]}
\newcommand{\UU}[2]{\scru_{#1}(#2)}
\newcommand{\VV}[2]{\scrt_{#1}(#2)}
\newcommand{\WW}[2]{\scrw_{#1}(#2)}
\newcommand{\RR}[3][]{\scrr^{#1}_{#2}(#3)}  
\newcommand{\hRR}[3][]{\5\scrr^{#1}_{#2}(#3)}  
\title{Nonrealizability of certain representations in fusion systems}
\author{Bob Oliver}
\address{Universit\'e Sorbonne Paris Nord, LAGA, UMR 7539 du CNRS, 
99, Av. J.-B. Cl\'ement, 93430 Villetaneuse, France.}
\email{bobol@math.univ-paris13.fr}
\thanks{B. Oliver is partially supported by UMR 7539 of the CNRS. Part of 
this work was carried out at the Isaac Newton Institute for Mathematical 
Sciences during the programme GRA2, supported by EPSRC grant nr. 
EP/K032208/1.}
\subjclass[2020]{Primary 20D20. Secondary 20C20, 20D05, 20E45} 
\keywords{finite groups, Sylow subgroups, fusion, finite simple groups, 
modular representations.}
\begin{document}

\begin{abstract} 
For a finite abelian $p$-group $A$ and a subgroup $\Gamma\le\Aut(A)$, we 
say that the pair $(\Gamma,A)$ is fusion realizable if there is a saturated 
fusion system $\calf$ over a finite $p$-group $S\ge A$ such that 
$C_S(A)=A$, $\autf(A)=\Gamma$ as subgroups of $\Aut(A)$, and $A\nnsg\calf$. 
In this paper, we develop tools to show that certain representations are 
not fusion realizable in this sense. For example, we show, for $p=2$ or $3$ 
and $\Gamma$ one of the Mathieu groups, that the only $\F_p\Gamma$-modules 
that are fusion realizable (up to extensions by trivial modules) are the 
Todd modules and in some cases their duals. 
\end{abstract}

\maketitle

Fix a prime $p$. A saturated fusion system over a finite $p$-group $S$ is a 
category whose objects are the subgroups of $S$, and whose morphisms are 
injective homomorphisms between those subgroups that satisfy certain axioms 
formulated by Puig \cite{Puig}, motivated in part by the Sylow theorems for 
finite groups. See Definition \ref{d:s.f.s.} for more details. 

Consider a pair $(\Gamma,A)$, where $A$ is a finite abelian $p$-group and 
$\Gamma\le\Aut(A)$ is a group of automorphisms. We say that $(\Gamma,A)$ is 
\emph{fusion realizable} if there is a saturated fusion system $\calf$ over 
some finite $p$-group $S\ge A$ such that $C_S(A)=A$, $A\nnsg\calf$, and 
$\autf(A)=\Gamma$ as groups of automorphisms of $A$. We also say that 
$(\Gamma,A)$ is \emph{realized by $\calf$} in this situation. 

In an earlier paper \cite{O-todd}, we considered the special case where 
$p=3$, $O^{3'}(\Gamma)\cong2M_{12}$, $M_{11}$, or $A_6$, and $A$ is an 
elementary abelian $3$-group of rank $6$, $5$, or $4$, respectively, and 
classified the saturated fusion systems that realize some pair $(\Gamma,A)$ 
of this form. In this paper, we take the opposite approach, and develop 
tools that we use to show that ``most'' $\F_p\Gamma$-modules are not fusion 
realizable; i.e., cannot be realized by any saturated fusion system. 

For example, in Definition \ref{d:K&R} and Proposition 
\ref{p:not.str.cl.3a}, we define certain sets $\RR{T}{A}$, for $A$ an 
abelian $p$-group and $T\le\Aut(A)$ a $p$-subgroup, with the property that 
$\RR{T}{A}\ne\emptyset$ if there is a fusion realizable pair $(\Gamma,A)$ 
where $T\in\sylp{\Gamma}$. As one of the consequences of this proposition, 
we show (Corollary \ref{c:not.str.cl.2}) that if $A$ is elementary abelian 
and $(\Gamma,A)$ is fusion realizable, then there is $m\ge1$ and an 
elementary abelian $p$-subgroup $B\le\Gamma$ of rank $m$ such that for each 
$g\in B^\#$, the action of $g$ on $A$ has at most $m$ nontrivial Jordan 
blocks.

Theorems \ref{ThA} and \ref{ThB} as stated below are our main applications 
so far of these tools. For example, as one special case of Theorem 
\ref{ThA}, we show that the Golay modules for $M_{22}$ and $M_{23}$ are not 
fusion realizable. In contrast, the Todd modules for $M_{22}$ and $M_{23}$ 
(dual to the Golay modules) are realized by the fusion systems of the 
Fischer groups $\Fi_{22}$ and $\Fi_{23}$, and the Golay module for 
$\Aut(M_{22})$ (a case not covered by the statement of Theorem \ref{ThA}) 
is realized by the fusion system of the Conway group $\Co_2$.

\begin{Thmm}[Theorem \ref{t:M11-24}] \label{ThA}
Fix a prime $p$, and let $\Gamma$ be a finite group such that 
$\Gamma_0=O^{p'}(\Gamma)$ is quasisimple and $\Gamma_0/Z(\Gamma_0)$ is one 
of Mathieu's five sporadic groups. Let $A$ be an 
$\F_p\Gamma$-module such that $(\Gamma,A)$ is fusion realizable, 
and set $A_0=[\Gamma_0,A]/C_{[\Gamma_0,A]}(\Gamma_0)$. Then either 
\begin{itemize} 

\item $p=2$, and $A_0$ is the Todd module for 
$\Gamma\cong M_{22}$, $M_{23}$, or $M_{24}$ or the Golay module for 
$\Gamma\cong M_{24}$; or 

\item $p=3$, $\Gamma\cong M_{11}$, $M_{11}\times C_2$, or $2M_{12}$, and 
and $A_0$ is the Todd module or Golay module for $\Gamma_0$; or 

\item $p=11$, $\Gamma_0\cong 2M_{12}$ or $2M_{22}$, 
$\Gamma/Z(\Gamma_0)\cong\Aut(M_{12})\times C_5$ or $\Aut(M_{22})\times 
C_5$, and $A_0$ is a $10$-dimensional simple $\F_{11}\Gamma$-module.

\end{itemize}
\end{Thmm}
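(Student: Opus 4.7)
The plan is to rule out every pair $(\Gamma,A)$ not on the list by a case analysis indexed by $\Gamma_0$ and $p$, using the necessary conditions for fusion realizability developed earlier. The two main tools are the non-emptiness of $\RR{T}{A}$ for some $T\in\sylp{\Gamma}$ (Proposition \ref{p:not.str.cl.3a}) and the Jordan-block bound (Corollary \ref{c:not.str.cl.2}): any fusion realizable $(\Gamma,A)$ must contain an elementary abelian $p$-subgroup $B\le\Gamma$ of some rank $m\ge1$ such that every $g\in B^\#$ acts on $A$ with at most $m$ nontrivial Jordan blocks.

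I would first reduce to the case $A=A_0$. Trivial $\Gamma_0$-constituents at the socle or top can be absorbed into the ambient $p$-group $S$ without altering $\autf(A)=\Gamma$ or the condition $A\nnsg\calf$, so the essential problem is to classify the irreducible $\F_p\Gamma_0$-modules (and their extensions to $\Gamma$) for which both necessary conditions can hold. The primes to consider are those dividing $|\Gamma_0|$, namely $p\in\{2,3,5,7,11,23\}$ for the Mathieu groups. For $p\in\{5,7,23\}$, and for $p=11$ outside the $2M_{12}$ and $2M_{22}$ covers, the Sylow $p$-subgroup is cyclic of order $p$; the Jordan-block bound then forces every nontrivial element to act with a single Jordan block on $A_0$, and comparison with the modular character table of the Mathieu groups eliminates every candidate except the two listed $10$-dimensional $\F_{11}$-modules. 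For $p=3$, the Sylow is small enough that the Jordan-block test, combined with enumeration of irreducibles via the modular Atlas, isolates exactly the Todd and Golay modules for $M_{11}$ and $2M_{12}$. For $p=2$, a parallel but more involved analysis using elementary abelian $2$-subgroups of rank $2$, $3$, and $4$ is needed.

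The main obstacle will be the characteristic-$2$ case for the larger Mathieu groups. Several $2$-modular irreducibles of $M_{22}$, $M_{23}$, and $M_{24}$ (of dimensions $34$, $44$, $120$, $220$, and so on) survive the Jordan-block bound alone, so for each such candidate I would need to invoke Proposition \ref{p:not.str.cl.3a} directly and show $\RR{T}{A}=\emptyset$, which requires detailed control of the orbit structure of a Sylow $2$-subgroup on $A$ and of its fixed-point subgroups. The delicate point is to distinguish the Golay modules for $M_{22}$ and $M_{23}$ (which must be shown unrealizable) from the Golay module for $M_{24}$ (realizable, via the fusion system of $\Co_1$): these modules have superficially similar invariant-theoretic behaviour, but only the $M_{24}$ case provides enough ``room'' for a fixed-point subgroup not to force $A\nsg\calf$. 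Resolving this distinction, case by case across the surviving modules, is the crux of the argument.
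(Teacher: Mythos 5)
Your overall strategy (reduce to $A_0$, then case-analyze by $p$ and $\Gamma_0$, applying the Jordan-block bound of Corollary \ref{c:not.str.cl.2} and resorting to $\RR{T}{A}$ only for the few survivors) is the right shape, but there is a concrete factual error in your $p=2$ case that misidentifies where the work actually lies.

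You claim that ``several $2$-modular irreducibles of $M_{22}$, $M_{23}$, and $M_{24}$ (of dimensions $34$, $44$, $120$, $220$, and so on) survive the Jordan-block bound alone.'' They do not. The bound from Lemmas \ref{l:M12,M24}(a) and \ref{l:J(x)ge...}(a) gives $\scrj_{A_0}(\tau)\ge\tfrac25(\chi_{A_0}(1)-\chi_{A_0}(\0{5a}))$, so for example the $34$-dimensional module for $M_{22}$ has $\scrj_{A_0}(\tau)\ge\tfrac25(34+1)=14>4=\rk_2(M_{22})$, the $44$-dimensional module for $M_{24}$ gives $\scrj_{A_0}(\tau)\ge\tfrac25\cdot45=18>6=\rk_2(M_{24})$, and so on. As Table \ref{tbl:JA(t)-2} records, the Jordan-block bound already kills every module of dimension larger than that of the Todd/Golay modules, so the only cases needing the more detailed $\RR[+]{T}{A}$ argument at $p=2$ are the Golay modules for $M_{22}$ and $M_{23}$ (the dual Todd modules) and the $6$-dimensional $\F_4\,3M_{22}$-module, which are exactly the modules handled in Proposition \ref{p:M22-23}. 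Your plan, which anticipates hand-to-hand combat with a long list of big modules, misreads the strength of Corollary \ref{c:not.str.cl.2}.

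Two further gaps. First, for the primes $p>3$ you invoke only the single-Jordan-block condition $|C_A(T)\cap[T,A]|=p$; this is necessary but not sufficient to pin down the cases in the theorem. Lemma \ref{l:indp} gives a second, independent condition $|N_\Gamma(T)/C_\Gamma(T)|=p-1$, and it is the combination (``$\Gamma\in\scrg_p^\wedge$ and $A$ minimally active,'' in the language of \cite{indp2}) that allows the appeal to \cite[Proposition 7.1]{indp2} to finish this case. Second, your reduction to $A=A_0$ by ``absorbing trivial constituents into $S$'' glosses over a genuine difficulty when $Z(\Gamma_0)$ has order divisible by $p$: when $\Gamma_0\cong 2M_{12}$ or $2M_{22}$ at $p=2$, a faithful $\F_2\Gamma$-module has the centre acting nontrivially, and passing to $A_0$ loses that information. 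The paper handles this with the last statement of Lemma \ref{l:pG-rep} (to worsen the Jordan-block count when $Z(\Gamma)\le B$) and, for $2M_{22}$ and $4M_{22}$, with the vanishing theorem $H^1(\4G;A_0)=0$ or $H^1(\4G;A_0^*)=0$ from \cite[Lemma 6.1]{MS} to rule out faithful indecomposable extensions. Without an argument along these lines, the central-extension cases are not addressed by your outline.
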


When $p=2$ or $3$, the nonrealizability of $(\Gamma,A)$ in Theorem 
\ref{ThA} is shown in all cases by proving that the set $\scrr_T(A)$ 
mentioned above is empty for $T\in\sylp\Gamma$. For $p>3$, it follows from 
results in \cite{indp2}.

Theorem \ref{ThB} is a restatement of a theorem of O'Nan \cite[Lemma 
1.10]{ONan} in the context of fusion systems, included here to illustrate 
how these methods apply when $A$ is not elementary abelian. Its proof is 
similar to O'Nan's, but is shortened by using results in Section 
\ref{s:A<|F}. 

\begin{Thmm}[Theorem \ref{t:Alp}] \label{ThB}
Assume, for some $n\ge3$, that $A=\gen{v_1,v_2,v_3}\cong C_{2^n}\times 
C_{2^n}\times C_{2^n}$, and that $S=A\gen{s,t}$ is an extension of $A$ by 
$D_8$ with action as described in Table \ref{tbl:D8onA}. Then $A$ is normal in 
every saturated fusion system over $S$. Thus there is no $\Gamma\le\Aut(A)$ 
with $\Aut_S(A)\in\syl2{\Gamma}$ such that $(\Gamma,A)$ is fusion 
realizable.
\end{Thmm}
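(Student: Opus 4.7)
The plan is to reduce Theorem~\ref{t:Alp} to the assertion that the set $\RR{T}{A}$ of Definition~\ref{d:K&R} is empty, where $T = \Aut_S(A)$, and then to verify this emptiness by a direct case analysis. First I would check that the action of $\gen{s,t}$ on $A$ specified in Table~\ref{tbl:D8onA} is faithful, so that $C_S(A) = A$ and $T = \Aut_S(A) \cong S/A \cong D_8$. Any fusion realization of a pair $(\Gamma, A)$ with $\Aut_S(A) \in \syl2\Gamma$ would give a saturated fusion system $\calf$ over some $S' \supseteq A$ satisfying $C_{S'}(A) = A$, $\Aut_{S'}(A) = T$, $\autf(A) = \Gamma$, and $A\nnsg\calf$, regardless of the extension class of $S'$. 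By Proposition~\ref{p:not.str.cl.3a}, the existence of such an $\calf$ forces $\RR{T}{A} \ne \emptyset$. The first conclusion of the theorem (normality of $A$ in every saturated fusion system over $S$) is the special case $S' = S$, and the concluding non-realizability assertion is the general case; both therefore reduce to showing $\RR{T}{A} = \emptyset$.

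To verify $\RR{T}{A} = \emptyset$, I would enumerate the $T$-invariant subgroups $V \le A$, using the explicit matrices for the $s$- and $t$-actions on $\{v_1, v_2, v_3\}$ from Table~\ref{tbl:D8onA}. Since the action of $T \cong D_8$ on $A \cong C_{2^n}^3$ is faithful and rigid, the $T$-invariant subgroups form a short list, obtained by scaling a few distinguished $T$-invariant subgroups (e.g.\ fixed subgroups of the involutions of $T$ and the rank-two $T$-invariant summand) by $2$-power multiples. For each candidate $V$, Definition~\ref{d:K&R} demands an automorphism $\alpha \in N_{\Aut(A)}(T)$ of a very specific form, acting in prescribed ways on $V$ and on $A/V$ and satisfying the required cocycle/centralizer conditions relative to $T$. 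I would show that no such $\alpha$ exists, by exploiting the fact that each nontrivial element of $T$ has a proper fixed-point subgroup in $A$ of small rank, which tightly constrains how $\alpha$ can permute the characteristic layers of $A$.

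The main obstacle will be the case analysis itself. The hypothesis $n \ge 3$ is used precisely to rule out the low-exponent degeneracies that would otherwise produce spurious $\alpha$'s --- for instance, situations in which $2A$ accidentally coincides with a smaller characteristic subgroup of $A$, opening up extra normalizing automorphisms of $T$. Once $\RR{T}{A} = \emptyset$ is established, both conclusions of Theorem~\ref{t:Alp} follow immediately from the reduction via Proposition~\ref{p:not.str.cl.3a}: every saturated fusion system over $S$ has $A$ as a normal subgroup, and no $\Gamma$ with $\Aut_S(A) \in \syl2\Gamma$ makes $(\Gamma,A)$ fusion realizable.
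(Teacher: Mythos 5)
Your proposal rests on the claim that $\RR{T}{A}=\emptyset$, but that set is in fact nonempty, so the proposed reduction breaks down at the very first step. To see this, take $A_*=\gen{v_1v_3}=C_A(s)$ and consider the set
\[
\scrr \;=\; \bigl\{\, (c_{s^2},\,\gen{c_s},\,A_*),\;\; (c_s,\,\gen{c_{s^2}},\,A_*),\;\; (c_{s^3},\,\gen{c_{s^2}},\,A_*)\,\bigr\}.
\]
Since $\4{v_1}=\4{v_3}^{-1}$ in $A/A_*\cong C_{2^n}^2$, one checks directly from Table \ref{tbl:D8onA} that $c_{s^2}$ acts by inversion on $A/A_*$ (giving $|C_{A/A_*}(c_{s^2})|=4=|\gen{c_s}|$), while $c_s$ acts on $A/A_*$ with matrix $\pmx0{-1}10$ (giving $|C_{A/A_*}(c_s)|=2=|\gen{c_{s^2}}|$). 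Both $\gen{c_s}\cong C_4$ and $\gen{c_{s^2}}\cong C_2$ are isomorphic to subgroups of $A$, and $A_*\le C_A(\gen{s})$. So $\scrr\subseteq\hRR{T}{A}$, and $\scrr$ visibly satisfies the closure condition~($*$) of Definition \ref{d:K&R} since the only first entries appearing are $c_s,c_{s^2},c_{s^3}$ and the only second entry whose nonidentity elements must be covered is $\gen{c_s}$. Hence $\scrr\subseteq\RR{T}{A}$ and $\RR{T}{A}\ne\emptyset$. (Your paraphrase of Definition \ref{d:K&R} as demanding an automorphism $\alpha\in N_{\Aut(A)}(T)$ is also not what the definition says; it concerns triples $(\tau,B,A_*)$ subject only to a centralizer-size equation and the condition~($*$).)

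The paper's proof therefore does not, and cannot, conclude by showing $\RR{T}{A}=\emptyset$. What it actually does is exploit the \emph{additional} information supplied by Proposition \ref{p:not.str.cl.3a}(c): not merely that some triple lies in $\RR{T}{A}$, but that there is a fully centralized $Z\le A$ with $A\nnsg C_\calf(Z)$ such that the relevant triples $(c_u^A,\Aut_U(A),U\cap A)$ arise inside $C_\calf(Z)$ with $U\cap A\le Z\le C_A(\Aut_U(A))$. Feeding $\tau=c_{s^2}$ into the numerical constraints of Proposition \ref{p:not.str.cl.3b} and comparing with Table \ref{tbl:CA(H)} then rules out $B\in\{\gen{s^2,t},\gen{s^2,st}\}$ and forces $B=\gen{c_s}$, $A_*=\gen{v_1v_3}$, hence $Z=\gen{v_1v_3}$. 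The decisive step is now to pass to the saturated quotient fusion system $C_\calf(Z)/Z$ over $C_S(Z)/Z\cong C_{2^n}^2\rtimes C_4$, where $A/Z$ is still not normal, and to invoke the preliminary Lemma \ref{l:ON} to get a contradiction. That two-variable reduction (pin down $Z$, then quotient and cite the rank-two lemma) is the heart of the argument and is exactly the piece your proposal is missing; a straight computation that $\RR{T}{A}=\emptyset$ is not available.
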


The paper is organized as follows. 
After summarizing in Section \ref{s:background} the basic definitions and 
properties of fusion systems that will be needed, we state and prove our 
main criteria for fusion realizability in Section \ref{s:A<|F}. We then 
look at representations of Mathieu groups in Section \ref{s:Mathieu} and 
prove Theorem \ref{ThA} (Theorem \ref{t:M11-24}), and study Alperin's 
$2$-groups in Section \ref{s:Alp} and prove Theorem \ref{ThB} (Theorem 
\ref{t:Alp}). We finish with three appendices: Appendix \ref{s:JV(x)} with 
some general results on representations, and Appendices \ref{s:Todd-F2} and 
\ref{s:3M22} where we set up notation to work with the Golay modules for 
$M_{22}$ and $M_{23}$, and the $6$-dimensional $\F_43M_{22}$-module, 
respectively.

\bigskip

\noindent\textbf{Notation and terminology:} Most of our notation for 
working with groups is fairly standard. When $P\le G$ and $x\in N_G(P)$, we 
let $c_x^P\in\Aut(P)$ denote conjugation by $x$ on the left: 
$c_x^P(g)=\9xg=xgx^{-1}$. Also, $\sylp{G}$ is the set of Sylow 
$p$-subgroups of a finite group $G$, and $G^\#=G\sminus\{1\}$. Other 
notation used here includes: \begin{itemize} 

\item $E_{p^m}$ is always an elementary abelian $p$-group of rank $m$; 

\item $A\rtimes B$ and $A.B$ denote a semidirect product and an 
arbitrary extension of $A$ by $B$; and 

\item $2M_{12}$, $nM_{22}$, and $2A_4$ denote (nonsplit) central extensions 
of $C_2$ or $C_n$ by the groups $M_{12}$, $M_{22}$, or $A_4$, respectively. 

\end{itemize}
Also, composition of functions and homomorphisms is always written from 
right to left.

\bigskip

\noindent\textbf{Thanks:} The author would like to thank the Newton 
Institute in Cambridge for its hospitality while he was finishing the 
writeup of this paper. He would also like to thank the referee for 
carefully reading the paper and making several suggestions for 
improvements.

\section{Background definitions and results}
\label{s:background}

We recall here some of the basic definitions and properties of 
saturated fusion systems. Our main reference is \cite{AKO}, although most 
of the results are also shown in \cite{Craven}. 

A \emph{fusion system} $\calf$ over a finite $p$-group $S$ is a 
category whose objects are the subgroups of $S$, such that for each $P,Q\le 
S$, 
\begin{itemize}
\item $\Hom_S(P,Q) \subseteq\homf(P,Q)\subseteq\Inj(P,Q)$; and 

\item every morphism in $\calf$ is the composite of an $\calf$-isomorphism 
followed by an inclusion.

\end{itemize} 
Here, $\Hom_S(P,Q) = \{c_g\in\Hom(P,Q) \,|\, g\in S,~ \9gP\le Q \}$. We 
also write $\isof(P,Q)$ for the set of $\calf$-isomorphisms from $P$ to 
$Q$, and $\autf(P)=\isof(P,P)$.

In order for fusion systems to be very useful, we need to assume they satisfy 
the following saturation properties, motivated by the Sylow theorems and 
first formulated by Puig \cite{Puig}. 

\begin{Defi} \label{d:s.f.s.}
Let $\calf$ be a fusion system over a finite $p$-group $S$.
\begin{enuma}
\item Two subgroups $P,Q\le{}S$ are \emph{$\calf$-conjugate} if 
$\isof(P,Q)\ne\emptyset$, and two elements $x,y\in S$ are 
$\calf$-conjugate if there is $\varphi\in\homf(\gen{x},\gen{y})$ such that 
$\varphi(x)=y$. The $\calf$-conjugacy classes of $P\le S$ and $x\in S$ are 
denoted $P^\calf$ and $x^\calf$, respectively. 

\item A subgroup $P\le S$ is \emph{fully normalized} in $\calf$ 
(\emph{fully centralized} in $\calf$) if $|N_S(P)|\ge|N_S(Q)|$ for each 
$Q\in P^\calf$ ($|C_S(P)|\ge|C_S(Q)|$ for each $Q\in P^\calf$). 

\item The fusion system $\calf$ is \emph{saturated} if it satisfies the 
following two conditions:\medskip
\begin{itemize}

\item \textup{(Sylow axiom)} For each subgroup $P\le S$ fully normalized in 
$\calf$, $P$ is fully centralized and $\Aut_S(P)\in\sylp{\autf(P)}$.

\item \textup{(extension axiom)} For each isomorphism $\varphi\in\isof(P,Q)$ 
in $\calf$ such that $Q$ is fully centralized in $\calf$, $\varphi$ extends 
to a morphism $\4\varphi\in\homf(N_\varphi,S)$ where 
	\[ N_\varphi = \{ g\in N_S(P) \,|\, \varphi c_g \varphi^{-1} \in 
	\Aut_S(Q) \}. \]
\end{itemize}
\end{enuma}
\end{Defi}

Definition \ref{d:s.f.s.} is the definition first given in \cite{BLO2}, and 
is used here since it seems to be the easiest to apply for our purposes. It 
is slightly different from that given in \cite[Definition I.2.2]{AKO}, but 
the two are equivalent by \cite[Proposition I.2.5]{AKO}. Its equivalence 
with Puig's original definition is shown in \cite[Proposition I.9.3]{AKO}.

As one example, the fusion system of a finite group $G$ with respect to a Sylow 
$p$-subgroup $S\le G$ is the category $\calf_S(G)$ whose objects are the 
subgroups of $S$, and whose morphisms are those homomorphisms between 
subgroups that are induced by conjugation in $G$. It is clearly a fusion 
system and was shown by Puig to be saturated. (See \cite[Proposition 
1.3]{BLO2} for a proof of saturation in terms of Definition 
\ref{d:s.f.s.}.)

We will also need to work with certain classes of subgroups in a fusion 
system. Recall, for a pair of finite groups $H<G$, that $H$ is \emph{strongly 
$p$-embedded in $G$} if $p\bmid|H|$, and $p\nmid|H\cap\9gH|$ for $g\in 
G\sminus H$.

\begin{Defi} \label{d:subgroups}
Let $\calf$ be a fusion system over a finite $p$-group $S$. For 
$P\le S$, 
\begin{itemize}

\item $P$ is \emph{$\calf$-centric} if $C_S(Q)\le Q$ for each $Q\in P^\calf$; 

\item $P$ is \emph{$\calf$-essential} if $P$ is $\calf$-centric and fully 
normalized in $\calf$ and the group $\outf(P)=\autf(P)/\Inn(P)$ contains a 
strongly $p$-embedded subgroup; 

\item $P$ is \emph{weakly closed in $\calf$} if $P^\calf=\{P\}$; 

\item $P$ is \emph{strongly closed in $\calf$} if for each $x\in P$, 
$x^\calf\subseteq P$; 

\item $P$ is \emph{central} in $\calf$ if each 
$\varphi\in\homf(Q,R)$, for $Q,R\le S$, extends to some 
$\4\varphi\in\homf(QP,RP)$ such that $\4\varphi|_P=\Id_P$; and 

\item $P$ is \emph{normal in $\calf$} ($P\nsg\calf$) if each morphism in $\calf$ 
extends to a morphism that sends $P$ to itself.
\end{itemize}
We also let $\calf^c$ and $\EE\calf$ be the sets of subgroups of $S$ that 
are $\calf$-centric or $\calf$-essential, respectively.
\end{Defi}

The following is one version of the Alperin-Goldschmidt fusion theorem for 
fusion systems. 

\begin{Thm}[{\cite[Theorem I.3.6]{AKO}}] \label{t:AFT}
Let $\calf$ be a saturated fusion system over a finite $p$-group $S$. Then 
each morphism in $\calf$ is a composite of restrictions of automorphisms 
$\alpha\in\autf(R)$ for $R\in\EE\calf\cup\{S\}$. 
\end{Thm}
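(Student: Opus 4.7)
The plan is to apply the Alperin--Goldschmidt theorem (Theorem~\ref{t:AFT}) to reduce $A\nsg\calf$ to a finite list of checks. Since every morphism in $\calf$ is a composite of restrictions of automorphisms of subgroups in $\EE\calf\cup\{S\}$, the subgroup $A$ will be normal in $\calf$ once we verify: (i) $\autf(S)$ preserves $A$; (ii) every $P\in\EE\calf$ contains $A$; and (iii) for every such essential $P$, $\autf(P)$ preserves $A$. Given (ii) and (iii), any $\alpha\in\autf(P)$ or $\alpha\in\autf(S)$ already restricts to a morphism whose obvious extension sends $A$ to $A$, so the same holds for all composites of restrictions.

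For (i), I would show that $A$ is a characteristic subgroup of $S$. Using the explicit $D_8$-action in Table~\ref{tbl:D8onA} together with the hypothesis $n\ge3$, $A$ should be identifiable from the internal structure of $S$ alone---for example as the unique abelian subgroup of $S$ isomorphic to $(C_{2^n})^3$, or as the centralizer in $S$ of a suitable characteristic subgroup, such as a well-chosen term in the derived or lower central series. The key point is that the $D_8$-action is faithful and the homocyclic structure $A\cong C_{2^n}^3$ with $n\ge3$ is rigid enough to be recoverable from the commutator and power structure of $S$.

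For (ii) and (iii), the task is to classify the possible $\calf$-essential subgroups $P\le S$. Any such $P$ is $\calf$-centric and fully normalized, and $\outf(P)$ contains a strongly $2$-embedded subgroup. For essentials $P$ that do contain $A$, the characteristicity argument of step~(i) transfers to $P$: $A$ is the unique abelian subgroup of $P$ of its isomorphism type (or admits an analogous internal description), so $\autf(P)$ must preserve $A$, giving (iii). For $P$ not containing $A$, I would combine the centricity condition $C_S(P)\le P$ with the explicit $D_8$-action on $A$ to control $P\cap A$ and the image of $PA/A$ in $D_8$, then analyze $N_S(P)/P$ to conclude that $\outf(P)$ cannot contain a strongly $2$-embedded subgroup, drawing on the general essential-subgroup criteria established in Section~\ref{s:A<|F}.

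The main obstacle is the case analysis in step~(ii): several $S$-conjugacy classes of centric subgroups meeting $A$ properly are a priori candidates to be essential, and ruling out each of them relies on the detailed action of the $D_8$-generators $s,t$ on $A$ recorded in Table~\ref{tbl:D8onA}, together with the hypothesis $n\ge3$ (which eliminates certain small-exponent coincidences that could otherwise produce essentials). Once (i)--(iii) are in place, Alperin--Goldschmidt yields $A\nsg\calf$, and the final assertion on fusion realizability is then immediate from the definition of a fusion realizable pair $(\Gamma,A)$, which requires $A\nnsg\calf$.
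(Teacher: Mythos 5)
Your proposal does not prove the stated theorem; it presupposes it. The statement at issue is the Alperin--Goldschmidt fusion theorem itself: every morphism in a saturated fusion system $\calf$ over $S$ is a composite of restrictions of automorphisms $\alpha\in\autf(R)$ with $R\in\EE\calf\cup\{S\}$. Your opening sentence, ``The plan is to apply the Alperin--Goldschmidt theorem (Theorem~\ref{t:AFT})\dots'', and all that follows, instead sketch a way to \emph{use} this theorem as a black box to show that the homocyclic abelian subgroup $A$ of Table~\ref{tbl:D8onA} is normal in every saturated fusion system over the Alperin $2$-group $S$ --- that is a different result, namely Theorem~\ref{t:Alp}. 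You are arguing for an application of the theorem, not for the theorem.

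For the record, the paper itself gives no proof of Theorem~\ref{t:AFT}; it is a cited background result from [AKO, Theorem~I.3.6], so there is no in-paper argument to compare against. An actual proof would go by downward induction on the order of the source subgroup of a morphism, using the saturation axioms of Definition~\ref{d:s.f.s.} to move to a fully normalized $\calf$-conjugate, applying the extension axiom to extend to $N_\varphi$, and splitting off an automorphism of a fully normalized $\calf$-centric subgroup whose outer automorphism group contains a strongly $p$-embedded subgroup (i.e.\ an essential subgroup). None of that reduction appears in your outline. And even read as an attempt at Theorem~\ref{t:Alp}, your route via a direct classification of essential subgroups is not the one the paper follows there: the paper instead applies Proposition~\ref{p:not.str.cl.3a} and the inequality \eqref{e:B2} from Proposition~\ref{p:not.str.cl.3b} to pin down the triple $(\tau,B,A_*)$, identifies $Z=\gen{v_1v_3}$, and then passes to the quotient fusion system $C_\calf(Z)/Z$ to reduce to Lemma~\ref{l:ON}.
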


The next proposition is more technical. 

\begin{Prop}[{\cite[Lemma I.2.6(c)]{AKO}}] \label{p:Hom(NSP,S)}
Let $\calf$ be a saturated fusion system over a finite $p$-group $S$. Then 
for each $P\le S$, and each $Q\in P^\calf$ fully normalized in $\calf$, 
there is $\psi\in\homf(N_S(P),S)$ such that $\psi(P)=Q$.
\end{Prop}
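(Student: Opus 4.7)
The plan is to take any $\calf$-isomorphism $\varphi_0\:P\to Q$, adjust it on the target side by an automorphism of $Q$ so that the resulting isomorphism has $N_S(P)$ contained in its ``extensibility domain,'' and then invoke the extension axiom. Since $Q$ is fully normalized, the Sylow axiom in Definition \ref{d:s.f.s.}(c) tells us that $Q$ is also fully centralized in $\calf$ (so the extension axiom will apply to any $\calf$-isomorphism into $Q$) and that $\Aut_S(Q)\in\sylp{\autf(Q)}$. This last fact is what makes the modification argument work.

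The key step: since $Q\in P^\calf$, pick any $\varphi_0\in\isof(P,Q)$. Then $\varphi_0\Aut_S(P)\varphi_0^{-1}$ is a $p$-subgroup of $\autf(Q)$, and because $\Aut_S(Q)$ is Sylow in $\autf(Q)$, there is $\chi\in\autf(Q)$ with
\[
\chi\bigl(\varphi_0\Aut_S(P)\varphi_0^{-1}\bigr)\chi^{-1}\le\Aut_S(Q).
\]
Set $\varphi=\chi\circ\varphi_0\in\isof(P,Q)$. For each $g\in N_S(P)$ one has $c_g^P\in\Aut_S(P)$, and the displayed inclusion gives $\varphi c_g^P\varphi^{-1}\in\Aut_S(Q)$. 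Hence $N_S(P)\le N_\varphi$, where $N_\varphi=\{g\in N_S(P)\,|\,\varphi c_g\varphi^{-1}\in\Aut_S(Q)\}$ is the subgroup appearing in the extension axiom.

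Now apply the extension axiom to $\varphi$: since $Q$ is fully centralized, $\varphi$ extends to some $\4\varphi\in\homf(N_\varphi,S)$. Restricting to $N_S(P)\le N_\varphi$ and setting $\psi=\4\varphi|_{N_S(P)}$ yields $\psi\in\homf(N_S(P),S)$ with $\psi(P)=\varphi(P)=Q$, as required. The only substantive point is the Sylow replacement of $\varphi_0$ by $\chi\varphi_0$, which is needed precisely to guarantee that the extensibility domain $N_\varphi$ swallows all of $N_S(P)$; everything else is unpacking definitions.
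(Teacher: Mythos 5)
Your proof is correct, and it is essentially the standard argument: the paper itself gives no proof but simply cites \cite[Lemma I.2.6(c)]{AKO}, where the same argument appears — modify $\varphi_0$ by a Sylow conjugation in $\autf(Q)$ so that $\Aut_S(P)$ is carried into $\Aut_S(Q)$, observe that this forces $N_\varphi=N_S(P)$, and then apply the extension axiom (valid because full normalization implies full centralization). Nothing to add.
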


Normal $p$-subgroups in a fusion system are strongly closed, but the 
converse does not always hold. The following is one situation where it does 
hold. For a much more detailed list of conditions under which strongly 
closed subgroups in a fusion system are normal, see \cite[Theorem 
B]{Kizmaz}.

\begin{Lem}[{\cite[Corollary I.4.7(a)]{AKO}}] \label{l:s.cl.=>normal}
Let $\calf$ be a saturated fusion system over a finite $p$-group $S$. If 
$A\nsg S$ is an abelian subgroup that is strongly closed in $\calf$, then 
$A\nsg\calf$. 
\end{Lem}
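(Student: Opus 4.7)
My plan is to use the Alperin--Goldschmidt fusion theorem (Theorem~\ref{t:AFT}) to reduce to a purely local question about essential subgroups, and then resolve that question via a $p$-group fixed-point argument. First I would upgrade strong closure to weak closure: any $\calf$-isomorphism $\varphi\in\isof(A,A')$ with $A'\le S$ satisfies $\varphi(a)\in a^\calf\subseteq A$ for each $a\in A$, so $\varphi(A)\subseteq A$ and hence $A'=A$ by cardinality. Thus $A^\calf=\{A\}$, so $A$ is fully normalized and (by the Sylow axiom) fully centralized. By Theorem~\ref{t:AFT}, it suffices to show that for each $R\in\EE\calf\cup\{S\}$ and each $\alpha\in\autf(R)$, $\alpha$ extends to a morphism sending $A$ to $A$. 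When $R=S$, strong closure gives $\alpha(A)=A$ directly, so $\alpha$ itself suffices. When $R\in\EE\calf$, once I show $A\le R$, strong closure forces $\alpha(A)\le A$, hence $\alpha(A)=A$ by cardinality, and again $\alpha$ itself is the required extension. The whole argument thus reduces to the single claim:
\[
(\ast)\qquad A\le R\qquad\text{for every } R\in\EE\calf.
\]

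To prove $(\ast)$, I would argue by contradiction: suppose some $R\in\EE\calf$ has $Q:=A\cap R\lneq A$, and produce a nontrivial normal $p$-subgroup of $\outf(R)$. Since $\outf(R)$ has a strongly $p$-embedded subgroup, $O_p(\outf(R))=1$, so any such $p$-subgroup gives a contradiction. By strong closure, $\autf(R)$ preserves $Q$, and therefore descends to a homomorphism into $\Aut(Q)\times\Aut(R/Q)$; let $K\nsg\autf(R)$ be its kernel. A quick iteration---writing $\phi(r)=rq_r$ with $q_r\in Q$ and checking $\phi^n(r)=rq_r^n$ by induction---shows that $K$ is a $p$-group. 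To find a non-inner element of $K$, set $A_0=A\cap N_S(R)$. Since $R$ is a $p$-group acting by conjugation on the nontrivial $p$-group $A/Q$ (the action is well-defined because $A\nsg S$ and $Q$ is $R$-invariant), the standard fixed-point theorem gives $(A/Q)^R=A_0/Q\neq 1$, so $A_0\supsetneq Q$. For each $a\in A_0$, the automorphism $c_a|_R$ lies in $K$: it is trivial on $Q$ since $A$ is abelian with $Q\le A$, and trivial on $R/Q$ since $[a,R]\le A\cap R=Q$. Finally, $\calf$-centricity of $R$ gives $C_S(R)\le R$, which precludes $\Aut_{A_0}(R)\le\Inn(R)$: otherwise each $a\in A_0$ would satisfy $c_a|_R=c_r|_R$ for some $r\in R$, forcing $ar^{-1}\in C_S(R)\le R$ and hence $A_0\le R\cap A=Q$, a contradiction. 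Thus $K\cdot\Inn(R)/\Inn(R)$ is a nontrivial normal $p$-subgroup of $\outf(R)$, the contradiction sought.

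The step I expect to be the main obstacle is resisting the temptation to attack the essential case directly with the extension axiom. The natural first move is to try to extend $\alpha\in\autf(R)$ to a morphism on $RA$ via the normalizer $N_\alpha$; but the requirement $A\le N_\alpha$ unravels to $\alpha c_a\alpha^{-1}\in\Aut_S(R)$ for each $a\in A$, which is essentially equivalent to---and no easier than---the conclusion $A\le R$ itself. The decisive idea that breaks this circularity is to give up on extending $\alpha$ entirely, and instead manufacture the forbidden normal $p$-subgroup of $\outf(R)$ from the $R$-fixed points in $A/Q$, turning a structural obstacle into a direct contradiction with the strongly $p$-embedded hypothesis.
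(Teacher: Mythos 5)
The paper does not reprove this lemma; it is cited from [AKO, Corollary I.4.7(a)]. Your self-contained argument is correct and follows essentially the same route as the cited proof: reduce via the Alperin--Goldschmidt fusion theorem to showing $A\le R$ for each $\calf$-essential $R$, and if $Q=A\cap R<A$, derive a contradiction by exhibiting (from $A_0=A\cap N_S(R)\supsetneq Q$, nontrivial by the $p$-group fixed-point argument on $A/Q$) a nontrivial normal $p$-subgroup of $\outf(R)$, which is impossible since $\outf(R)$ has a strongly $p$-embedded subgroup and hence trivial $p$-core.
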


We next look at centralizers of $p$-subgroups in fusion systems. Normalizer 
subsystems are defined in a similar way (see \cite[\S I.5]{AKO}), but 
will not be needed here. 

\begin{Defi} \label{d:NF(Q)}
Let $\calf$ be a fusion system over a finite $p$-group $S$. For each 
$Q\le S$, the \emph{centralizer fusion subsystem} $C_\calf(Q)\le\calf$ is 
the fusion subsystem over $C_S(Q)$ defined by setting 
	\[ \Hom_{C_\calf(Q)}(P,R) = \bigl\{ \varphi|_P \,\big|\, 
	\varphi\in\homf(PQ,RQ),~ \varphi(P)\le R,~ \varphi|_Q=\Id_Q 
	\bigr\}. \]
\end{Defi}

Note that a subgroup $Q\le S$ is central in $\calf$ if and only 
if $C_\calf(Q)=\calf$. 

\begin{Thm}[{\cite[Theorem I.5.5]{AKO}}] \label{t:NF(Q)}
Let $\calf$ be a saturated fusion system over a finite $p$-group $S$, 
and fix $Q\le S$. Then $C_\calf(Q)$ is saturated if $Q$ is 
fully centralized in $\calf$. 
\end{Thm}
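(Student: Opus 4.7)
The plan is to verify the (Sylow) and (extension) axioms of Definition \ref{d:s.f.s.} for $C_\calf(Q)$ as a fusion system over $T\defeq C_S(Q)$, using the saturation of $\calf$ together with the hypothesis that $Q$ is fully centralized in $\calf$. That $C_\calf(Q)$ is in fact a fusion system (i.e., that it contains $\Hom_T(-,-)$ and that every morphism factors as an isomorphism followed by an inclusion) is immediate from the definition. Throughout, the central observation is that by Definition \ref{d:NF(Q)}, every $\varphi\in\Hom_{C_\calf(Q)}(P,R)$ is the restriction to $P$ of some $\til\varphi\in\homf(PQ,RQ)$ with $\til\varphi|_Q=\Id_Q$; thus questions about $P\le T$ in $C_\calf(Q)$ translate into questions about $PQ\le S$ in $\calf$, restricted to $Q$-fixing morphisms.

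The core technical step is a ``transfer lemma'': for each $P\le T$, there is $\psi\in\Hom_{C_\calf(Q)}(N_T(P),T)$ such that $\psi(P)Q$ is fully normalized in $\calf$ and $\psi$ carries $P$ to a $C_\calf(Q)$-conjugate of $P$. To prove this, I first use Proposition \ref{p:Hom(NSP,S)} to obtain $\chi\in\homf(N_S(PQ),S)$ sending $PQ$ to a fully $\calf$-normalized subgroup $R=\chi(PQ)$. Then $\chi(Q)$ is $\calf$-conjugate to $Q$, so since $Q$ is fully centralized (hence fully normalized via the Sylow axiom) one can produce $\alpha\in\autf(R)$ with $\alpha\chi|_Q=\Id_Q$: indeed, by Proposition \ref{p:Hom(NSP,S)} applied inside $R$ one moves $\chi(Q)$ back to $Q$ inside $R$, and then absorbs the residual automorphism of $Q$ using an element of $\Aut_S(R)$ produced by the Sylow axiom for $\calf$ at $R$ (here one uses that $R\ge Q$ is fully normalized). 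The composite $\alpha\chi$ restricted to $N_T(P)\subseteq N_S(PQ)$ is the desired $\psi$.

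With this transfer lemma in hand, the two axioms follow by a standard pattern. For the Sylow axiom, given $P\le T$ fully normalized in $C_\calf(Q)$, the transfer lemma lets me assume that $PQ$ is itself fully normalized in $\calf$. Then $\Aut_S(PQ)\in\sylp{\autf(PQ)}$, and $\Aut_{C_\calf(Q)}(P)$ is identified with the subgroup of $\autf(PQ)$ consisting of automorphisms that normalize $P$ and fix $Q$ pointwise, while $\Aut_T(P)$ is identified with the corresponding subgroup of $\Aut_S(PQ)$; a Sylow-theoretic argument using that elements of $\Aut_T(P)$ already centralize $Q$ gives $\Aut_T(P)\in\sylp{\Aut_{C_\calf(Q)}(P)}$, and comparing $|C_T(P)|$ with $|C_T(P')|$ for arbitrary $P'\in P^{C_\calf(Q)}$ (again after transfer) yields that $P$ is also fully centralized in $C_\calf(Q)$. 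For the extension axiom, given $\varphi\in\isof[C_\calf(Q)](P,P')$ with $P'$ fully centralized in $C_\calf(Q)$, I lift to $\til\varphi\in\isof(PQ,P'Q)$ with $\til\varphi|_Q=\Id_Q$. Any $g\in N_\varphi^{C_\calf(Q)}$ lies in $N_T(P)\le N_S(PQ)$ and satisfies $\til\varphi c_g\til\varphi^{-1}\in\Aut_S(P'Q)$, so $g\in N_{\til\varphi}$; the extension axiom for $\calf$ extends $\til\varphi$ over $N_{\til\varphi}$, and a further adjustment by a $Q$-fixing automorphism of $P'Q$ (available because $P'Q$ can be arranged to be fully normalized in $\calf$ via the transfer lemma, and $Q$ is fully centralized) produces an extension fixing $Q$ pointwise, whose restriction to $N_\varphi^{C_\calf(Q)}$ is the required morphism in $C_\calf(Q)$.

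The main obstacle is the $Q$-fixing adjustment in the transfer lemma: one must modify a morphism of $\calf$ so that it restricts to the identity on $Q$ rather than merely sending $Q$ to a $\calf$-conjugate. This is precisely the step where full centralization of $Q$ is used in an essential way, via the Sylow axiom for $\calf$ at subgroups containing $Q$, and getting this adjustment right is what makes the whole argument go through.
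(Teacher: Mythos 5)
The paper does not prove this theorem; it is stated as a citation to \cite[Theorem I.5.5]{AKO} (which in fact treats the more general $K$-normalizer subsystems $N_\calf^K(Q)$, with $C_\calf(Q)$ being the case $K=1$). Your overall plan --- verify the Sylow and extension axioms for $C_\calf(Q)$ by lifting to $\calf$-morphisms on the subgroups $PQ$ and using a ``transfer lemma'' to reduce to the case where $PQ$ is fully $\calf$-normalized --- is the right strategy and matches the spirit of the argument in \cite{AKO}. But the sketch of the transfer lemma, which you yourself flag as the crux of the whole proof, has genuine gaps.

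First, the parenthetical ``since $Q$ is fully centralized (hence fully normalized via the Sylow axiom)'' is false: the Sylow axiom in Definition \ref{d:s.f.s.} gives fully normalized $\Rightarrow$ fully centralized, not the converse. A fully centralized subgroup is receptive but need not be fully automized, so it need not be fully normalized. Second, you assert ``$R \ge Q$'' for $R=\chi(PQ)$, but Proposition \ref{p:Hom(NSP,S)} gives no control over where $\chi$ sends $Q$; the subgroup $\chi(Q)\le R$ is merely $\calf$-conjugate to $Q$, and nothing forces $Q$ itself to lie in $R$. Third, the existence of $\alpha\in\autf(R)$ with $\alpha\chi|_Q=\Id_Q$ is not established: even granting $Q\le R$, you would need $\chi(Q)$ and $Q$ to be conjugate in $\autf(R)$ and then need to cancel the residual automorphism of $Q$, and neither is justified as written. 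The extension axiom applied to $Q$ (using that $Q$ is fully centralized) does provide $\eta\in\homf(C_S(\chi(Q)),S)$ with $\eta|_{\chi(Q)}=(\chi|_Q)^{-1}$, but when $Q$ is not abelian the subgroup $C_S(\chi(Q))$ need not contain $R=\chi(PQ)$, so $\eta$ does not restrict to an automorphism of $R$, and the composite $\eta\chi$ need not carry $PQ$ to a fully normalized subgroup. In short, the one step where full centralization of $Q$ must do real work is exactly the step where your argument breaks down; the actual proof in \cite{AKO} chooses the $C_\calf(Q)$-conjugate of $P$ more carefully (maximizing a normalizer in $C_S(Q)$ rather than in $S$) precisely to avoid these issues.
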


Weakly closed abelian subgroups play a central role in the paper, and the 
following lemma is of crucial importance when working with them.

\begin{Lem} \label{l:A-w.cl.}
Let $\calf$ be a saturated fusion system over a finite $p$-group $S$, and 
assume $A\le S$ is an abelian subgroup that is weakly closed in 
$\calf$. 
\begin{enuma} 

\item If $R\le S$ is fully normalized and $\calf$-conjugate to some $Q\le 
A$, then $R\le A$. 

\item For each $P,Q\le A$, each $\varphi\in\homf(P,Q)$ extends to some 
$\4\varphi\in\autf(A)$.

\end{enuma}
\end{Lem}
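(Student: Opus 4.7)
The plan is to use Proposition~\ref{p:Hom(NSP,S)} together with the weak closure of $A$ to reduce both statements to a fully normalized subgroup $P_0\le A$, and then (for part (b)) apply the extension axiom at $P_0$. The key observation throughout is that since $A$ is abelian, $A\le N_S(P)$ for every $P\le A$, so any morphism produced by Proposition~\ref{p:Hom(NSP,S)} from such a normalizer is automatically defined on $A$ and, by weak closure, restricts to an element of $\autf(A)$.

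For part (a), I would apply Proposition~\ref{p:Hom(NSP,S)} with source $Q$ and target the fully normalized member $R$ of $Q^\calf$: this yields $\psi\in\homf(N_S(Q),S)$ with $\psi(Q)=R$. Since $Q\le A$ and $A$ is abelian, $A\le C_S(Q)\le N_S(Q)$, so $\psi$ is defined on $A$. Its image $\psi(A)$ is $\calf$-conjugate to $A$, hence equals $A$ by weak closure, and therefore $R=\psi(Q)\le\psi(A)=A$.

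For part (b), after replacing $Q$ by $\varphi(P)$ I may assume $\varphi$ is an $\calf$-isomorphism onto its image. Choose a fully normalized representative $P_0\in P^\calf$; part (a) guarantees $P_0\le A$. Two more applications of Proposition~\ref{p:Hom(NSP,S)} produce morphisms $\psi_P\in\homf(N_S(P),S)$ with $\psi_P(P)=P_0$ and $\psi_Q\in\homf(N_S(\varphi(P)),S)$ with $\psi_Q(\varphi(P))=P_0$; both are defined on $A$ exactly as in part (a), and by weak closure their restrictions give automorphisms $\alpha_P,\alpha_Q\in\autf(A)$ sending $P$ and $\varphi(P)$ onto $P_0$. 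Form the conjugate $\beta:=(\alpha_Q\circ\varphi\circ\alpha_P^{-1})|_{P_0}\in\autf(P_0)$.

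I would then trigger the extension axiom on $\beta$. Since $P_0$ is fully normalized it is fully centralized by the Sylow axiom, so $\beta$ extends to some $\bar\beta\in\homf(N_\beta,S)$, where $N_\beta=\{g\in N_S(P_0)\mid \beta c_g\beta^{-1}\in\Aut_S(P_0)\}$. Abelianness of $A$ gives $c_g^{P_0}=\Id_{P_0}$ for every $g\in A$, which forces $A\le N_\beta$; weak closure then yields $\bar\beta|_A\in\autf(A)$, and $\bar\varphi:=\alpha_Q^{-1}\circ(\bar\beta|_A)\circ\alpha_P\in\autf(A)$ restricts to $\varphi$ on $P$. I do not expect a serious obstacle here; the only point to verify carefully is that the abelianness of $A$ forces $A\le N_\beta$, which is exactly what lets the extension axiom do the work.
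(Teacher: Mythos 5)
Your proof is correct and rests on exactly the same pillars as the paper's: abelianness of $A$ forces $A\le C_S(P)\le N_S(P)$ (and $A\le N_\beta$), and weak closure forces the extension to carry $A$ to $A$. The only difference is architectural, in part (b): the paper picks a fully centralized $R\in Q^\calf$ and extends the two isomorphisms $\psi$ and $\psi\varphi$ (both with target $R$) directly by the extension axiom and then composes, while you first move $P$ and $\varphi(P)$ onto a common fully normalized $P_0$ via Proposition~\ref{p:Hom(NSP,S)}, conjugate $\varphi$ to $\beta\in\autf(P_0)$, and extend once — one more intermediate conjugation, but the same underlying idea.
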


\begin{proof} \textbf{(a) }  Assume $Q\le A$ and $R\le S$ are 
$\calf$-conjugate, and $R$ is fully normalized in $\calf$. By the extension 
axiom, each $\psi\in\isof(Q,R)$ extends to some $\4\psi\in\homf(C_S(Q),S)$. 
Then $C_S(Q)\ge A$ since $A$ is abelian, $\4\psi(A)=A$ since $A$ is weakly 
closed in $\calf$, and so $R=\4\psi(Q)\le A$. 

\smallskip

\noindent\textbf{(b) } Assume $P,Q\le A$ and $\varphi\in\homf(P,Q)$, and 
choose $R\in Q^\calf$ that is fully centralized in $\calf$. Thus $R\le A$ 
by (a), and there is $\psi\in\isof(Q,R)$. By the extension axiom again, 
$\psi$ extends to $\5\psi\in\homf(A,S)$ and $\psi\varphi$ extends to 
$\5\varphi\in\homf(A,S)$, and $\5\psi(A)=A=\5\varphi(A)$ since $A$ is 
weakly closed. Then $\5\psi^{-1}\5\varphi\in\autf(A)$, and 
$(\5\psi^{-1}\5\varphi)|_P=\psi^{-1}(\psi\varphi)=\varphi$. 
\end{proof}

The proof of the next lemma gives another example of how the extension 
axiom can be used. 

\begin{Lem} \label{l:f.cent.+}
Let $\calf$ be a saturated fusion system over a finite $p$-group $S$, and 
let $A_0\le A_1\le S$ be a pair of abelian subgroups. If $A_0$ is fully 
centralized in $\calf$ and $A_1$ is fully centralized in $C_\calf(A_0)$, 
then $A_1$ is fully centralized in $\calf$.
\end{Lem}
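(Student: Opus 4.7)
The plan is to show $|C_S(A_1)|\ge|C_S(B)|$ for an arbitrary $B\in A_1^\calf$. The strategy is: starting from a $\calf$-isomorphism $\varphi\colon A_1\to B$, transport $B$ into $C_S(A_0)$ by extending $\varphi|_{A_0}^{-1}$, obtaining a subgroup $B'\le C_S(A_0)$ that is $C_\calf(A_0)$-conjugate to $A_1$; then the hypothesis on $A_1$ inside the saturated subsystem $C_\calf(A_0)$ (saturated by Theorem \ref{t:NF(Q)}) provides the desired bound.

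Fix $\varphi\in\isof(A_1,B)$ and set $B_0=\varphi(A_0)$. Since $A_0$ is fully centralized in $\calf$, the extension axiom applied to $\psi:=(\varphi|_{A_0})^{-1}\colon B_0\to A_0$ yields $\5\psi\in\homf(N_\psi,S)$ extending $\psi$. Because $B$ is abelian and $B_0\le B$, we have $B\le C_S(B_0)$; since every element of $C_S(B_0)$ conjugates trivially on $B_0$, also $C_S(B_0)\subseteq N_\psi$. Hence $\5\psi$ is defined on $B$ and on $C_S(B)$. Set $B'=\5\psi(B)$. Then $A_0=\5\psi(B_0)\le B'$, and $[B,B_0]=1$ forces $[B',A_0]=1$, so $B'\le C_S(A_0)$. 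The composite $\tau:=\5\psi\circ\varphi\colon A_1\to B'$ is an $\calf$-isomorphism with $\tau|_{A_0}=\psi\circ\varphi|_{A_0}=\Id_{A_0}$, and since $A_0\le A_1\cap B'$ one has $A_1A_0=A_1$ and $B'A_0=B'$, so Definition \ref{d:NF(Q)} identifies $\tau$ as an isomorphism in $C_\calf(A_0)$. In particular $B'$ is $C_\calf(A_0)$-conjugate to $A_1$.

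Applying the hypothesis that $A_1$ is fully centralized in $C_\calf(A_0)$ to the conjugate $B'$, combined with the identities $C_{C_S(A_0)}(A_1)=C_S(A_1)$ and $C_{C_S(A_0)}(B')=C_S(B')$ (both following from $A_0\le A_1\cap B'$), we obtain $|C_S(A_1)|\ge|C_S(B')|$. Finally, $\5\psi$ restricts to an injective homomorphism $C_S(B)\hookrightarrow S$ with image in $C_S(B')$ (for $x\in C_S(B)$, $[\5\psi(x),B']=\5\psi([x,B])=1$), hence $|C_S(B)|\le|C_S(B')|\le|C_S(A_1)|$, as required. The only mildly delicate step is verifying that $\tau$ qualifies as a morphism in $C_\calf(A_0)$; once one observes $\tau|_{A_0}=\Id$ and that both source and target absorb $A_0$, the remainder is a straightforward bookkeeping of centralizer orders.
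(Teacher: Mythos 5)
Your proof is correct, and it follows essentially the same strategy as the paper's: transport the $\calf$-conjugate of $A_1$ into $C_S(A_0)$ by extending (the inverse of) its restriction to $A_0$, land in a $C_\calf(A_0)$-conjugate of $A_1$, then invoke the hypothesis that $A_1$ is fully centralized in $C_\calf(A_0)$. The one difference is organizational: the paper first passes to a fully centralized representative $B_1\in A_1^\calf$, which costs a second application of the extension axiom (extending $\chi\colon A_1\to B_1$ over $C_S(A_1)$), and then compares $C_S(A_1)$ with $C_S(B_1)$ via the equality $\psi\varphi(C_S(A_1))=C_{C_S(A_0)}(\psi(B_1))$. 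You instead work with an arbitrary $B\in A_1^\calf$ and bound $|C_S(B)|\le|C_S(B')|$ directly from the injectivity of $\widehat\psi$ on $C_S(B)\le C_S(B_0)\subseteq N_\psi$, so only one use of the extension axiom is needed. That is a genuine (if modest) streamlining of the argument. All the verifications you flag as delicate — that $C_S(B_0)\subseteq N_\psi$, that $\tau$ is a $C_\calf(A_0)$-isomorphism because $\tau|_{A_0}=\Id_{A_0}$ and $A_1A_0=A_1$, $B'A_0=B'$, and that $C_{C_S(A_0)}(A_1)=C_S(A_1)$, $C_{C_S(A_0)}(B')=C_S(B')$ follow from $A_0\le A_1\cap B'$ — are correct.
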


\begin{proof} Choose $B_1\in A_1^\calf$ that is fully centralized in 
$\calf$, fix $\chi\in\isof(A_1,B_1)$, and set $B_0=\chi(A_0)$. By the 
extension axiom and since $A_0$ and $B_1$ are both fully centralized in 
$\calf$, there are $\varphi\in\homf(C_S(A_1),C_S(B_1))$ and 
$\psi\in\homf(C_S(B_0),C_S(A_0))$ such that $\varphi|_{A_1}=\chi$ and 
$\psi|_{B_0}=(\chi|_{A_0})^{-1}$. Since $C_S(B_1)\le C_S(B_0)$, the 
composite $\psi\varphi$ lies in $\Hom_{C_\calf(A_0)}(C_S(A_1),C_S(A_0))$.

Since $A_1$ is fully centralized in $C_\calf(A_0)$, 
	\[ \psi\varphi(C_S(A_1))=C_{C_S(A_0)}(\psi(B_1))
	=C_S(\psi(B_1))\ge\psi(C_S(B_1)), \] 
and hence $\varphi(C_S(A_1))\ge C_S(B_1)$. So $A_1$ is fully centralized in 
$\calf$ since $B_1$ is. 
\end{proof}

We will need to work with quotient fusion systems in Section \ref{s:Alp}, 
but only quotients by subgroups normal in the fusion system.

\begin{Defi} \label{d:F/Q}
Let $\calf$ be a fusion system, and assume $Q\nsg S$ is normal in $\calf$. 
Let $\calf/Q$ be the fusion system over $S/Q$ where for each $P,R\le S$ 
containing $Q$, we set
	\begin{multline*} 
	\Hom_{\calf/Q}(P/Q,R/Q) =\\ \bigl\{\varphi/Q\in\Hom(P/Q,R/Q) 
	\,\bigl|\, \varphi\in\homf(P,Q), ~ (\varphi/Q)(gQ)=\varphi(g)Q 
	~\forall\,g\in P \bigr\}.
	\end{multline*}
\end{Defi}

We refer to \cite[Proposition II.5.11]{Craven} for the proof that $\calf/Q$ 
is saturated whenever $\calf$ is. In fact, this definition and the 
saturation of $\calf/Q$ hold whenever $Q$ is weakly closed in $\calf$. This 
is not surprising, since we are looking only at morphisms in $\calf$ 
between subgroups containing $Q$, so that $\calf/Q=N_\calf(Q)/Q$.

\section{Some criteria for realizing representations}
\label{s:A<|F}

In this section, we state and prove our main technical results: the tools 
we later use to show that certain representations cannot be realized by any 
saturated fusion systems. Before doing that, we start by defining more 
formally what we mean by ``realizability''. 

\begin{Defi} \label{d:realize}
Fix a prime $p$, a finite abelian $p$-group $A$, and a subgroup 
$\Gamma\le\Aut(A)$. The pair $(\Gamma,A)$ is \emph{realized} by a saturated 
fusion system $\calf$ over a finite $p$-group $S$ if there is an abelian 
subgroup $B\le S$ such that $C_S(B)=B$ and $B\nnsg\calf$, and such that 
$(\autf(B),B)\cong(\Gamma,A)$. The pair $(\Gamma,A)$ is \emph{fusion 
realizable} if it is realized by some saturated fusion system over a finite 
$p$-group. 
\end{Defi}

If we drop the condition that $C_S(B)=B$, then it is easy to see that 
every pair $(\Gamma,A)$ can be realized by a saturated fusion system. For 
example, if $m>1$ is prime to $p$, then the fusion system $\calf$ of 
$(A\rtimes\Gamma)\wr C_m$ contains a subgroup isomorphic to $A$ with 
automizer isomorphic to $\Gamma$ which is not normal in $\calf$. Hence the 
importance of that condition in Definition \ref{d:realize}, although it 
seems possible that we would get similar results if it were replaced by the 
condition that $B$ be weakly closed. 

It is not yet clear to us whether the condition ``$B\nnsg\calf$'' is the 
optimal one to use in Definition \ref{d:realize}. It could be 
replaced by the slightly stronger condition that $\Omega_1(B)\nnsg\calf$, 
or by the even stronger condition that $O_p(\calf)=1$. In the cases dealt 
with in Theorems \ref{ThA} and \ref{ThB}, the result is the same 
independently of which definition we choose, but that probably does not 
hold in other situations. 

When applying Definition \ref{d:realize}, rather than assuming $(\Gamma,A)$ 
and $(\autf(B),B)$ are abstractly isomorphic, it will in practice be more 
convenient to say that $(\Gamma,A)$ is realized by a fusion system $\calf$ 
over $S$ if $S$ contains $A$ as a subgroup and $\autf(A)=\Gamma$. 


We are now ready to start developing tools for showing that certain pairs 
$(\Gamma,A)$ are not (weakly) fusion realizable. The starting point for all 
results in this section is the following proposition. It was inspired in 
part by \cite[Corollary 4]{Goldschmidt} and its proof, and also in part by 
arguments in \cite[\S\,1]{ONan}.

\begin{Prop} \label{p:not.str.cl.2}
Let $\calf$ be a saturated fusion system over a finite $p$-group $S$, and 
let $A\le S$ be an abelian subgroup. Assume $A\nnsg\calf$, and consider the 
sets 
	\begin{align*} 
	\scru &= \UU{\calf}{A} = \bigl\{ 1\ne U\le N_S(A) \,\big|\, U\nleq A,~ 
	\homf(U,A) \ne\emptyset \bigr\} \\
	\scrt &= \VV{\calf}{A} = \bigl\{ t\in N_S(A)\sminus A \,\big|\, 
	t^\calf\cap A\ne\emptyset \bigr\} 
	= \bigl\{ t\in N_S(A)\sminus A \,\big|\, \gen{t}\in\scru \bigr\} \\ 
	\scrw &= \WW{\calf}{A} = \bigl\{ (t,U,A_*) \,\big|\, 
	t\in\scrt,~ U\in\scru,~ C_A(t) \ge A_*\in(U\cap A)^\calf, ~ \\
	& \hskip100mm |UA/A|=|C_{A/A_*}(t)| \bigr\}.
	\end{align*}
Then $\scru\ne\emptyset$, $\scrt\ne\emptyset$, and 
$\scrw\ne\emptyset$, and the following hold. 
\begin{enuma} 

\item If $A$ is not weakly closed in $\calf$, there is $U\in 
A^\calf\sminus\{A\}$ such that $[U,A]\le U\cap A$, and such that 
$(t,U,U\cap A)\in\scrw$ for each $t\in U\sminus A$. 

\item If $A$ is weakly closed in $\calf$, then for each $t\in\scrt$, there 
are $U\in\scru$ and $A_*\le A$ such that $(t,U,A_*)\in\scrw$. 

\item If $A$ is weakly closed in $\calf$, then there is a subgroup 
$Z\le A$, fully centralized in $\calf$, such that $A\nnsg 
C_\calf(Z)$, and such that $U\cap A\le Z$ for each 
$U\in\UU{{C_\calf(Z)}}{A}$. In particular, $A_*=U\cap A$ for each 
$(t,U,A_*)\in\WW{{C_\calf(Z)}}{A}\subseteq \WW{\calf}{A}$. 

\end{enuma}
Thus in all cases, there are $t\in\scrt$ and $U\in\scru$ such that 
$(t,U,U\cap A)\in\scrw$.
\end{Prop}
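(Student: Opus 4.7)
The plan is to prove (a), (b), (c) separately. Nonemptiness of $\scru,\scrt,\scrw$ and the concluding ``Thus'' statement then follow: $t\in\scrt\iff\gen{t}\in\scru$ makes $\scru$ and $\scrt$ simultaneously nonempty, and the required triple in $\WW{\calf}{A}$ comes from (a) when $A$ is not weakly closed, and from (c) (via $\WW{{C_\calf(Z)}}{A}\subseteq\WW{\calf}{A}$) when $A$ is weakly closed. In the weakly closed case one needs a preliminary observation to produce elements of $\scrt$: a weakly closed abelian subgroup is $S$-invariant (since $S$-conjugation is a $\calf$-morphism), so $A\nsg S$ and $N_S(A)=S$; then Lemma \ref{l:s.cl.=>normal} together with $A\nnsg\calf$ forces $A$ to fail strong closure, yielding some $x\in A$ with a $\calf$-conjugate $t\in S\sminus A=N_S(A)\sminus A$, hence $t\in\scrt$.

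For (a), I would take $U\in A^\calf\sminus\{A\}$ maximizing $|U\cap A|$, set $D=U\cap A$, and prove $[U,A]\le D$ (equivalently, $U\le N_S(A)$ and $A\le N_S(U)$). The strategy is a $p$-group argument inside $\gen{A,U}$: if $u\in U\sminus N_S(A)$, then $U':=uAu^{-1}\in A^\calf\sminus\{A\}$, and analyzing the commutator map $a\mapsto[u,a]$ on $A$ together with the normalizer growth property in $\gen{A,U'}$ (using that $u$ centralizes $D\le U$ because $U$ is abelian) must contradict the maximality of $|U\cap A|$. Once $[U,A]\le D$ is established, the verification that $(t,U,U\cap A)\in\scrw$ for $t\in U\sminus A$ is routine: $[U,A]\le A$ gives $U\le N_S(A)$; any $\sigma\in\isof(U,A)$ shows $U\in\scru$ and $\sigma(t)\in A$ so $t\in\scrt$; $U$ abelian gives $U\cap A\le C_A(t)$; and $|UA/A|=|U|/|D|=|A|/|D|=|C_{A/D}(t)|$, the last equality because $[U,A]\le D$ forces every element of $U$ (hence $t$) to act trivially on $A/D$.

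For (b), given $t\in\scrt$ with $\varphi\in\homf(\gen{t},A)$ and $\varphi(t)=s\in A$, I would first use Lemma \ref{l:A-w.cl.} (valid since $A$ is weakly closed and abelian) to replace $s$ by a fully centralized $\calf$-conjugate inside $A$, then apply the extension axiom to extend $\varphi$ to $\bar\varphi$ on $N_\varphi\supseteq\gen{t}\cdot C_A(t)$. Take $U$ maximal subject to $\gen{t}\le U\le N_\varphi\cap N_S(A)$ and $\bar\varphi(U)\le A$, and set $A_*=\bar\varphi(U\cap A)$. Most $\scrw$-conditions are immediate; the critical identity $|UA/A|=|C_{A/A_*}(t)|$ follows from the maximality of $U$: for $x\in N_S(A)$, adjoining $x$ to $U$ while keeping $\bar\varphi(x)\in A$ is possible exactly when $c_x$ commutes with $c_t$ on $A$ modulo $A_*$, i.e.\ iff $xA\in C_{A/A_*}(t)$, so maximality makes these two sets coincide in size.

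For (c), I would build $Z$ by iteration. Set $Z_0=1$, so $C_\calf(Z_0)=\calf$. Given $Z_i\le A$ fully centralized in $\calf$ with $A\nnsg C_\calf(Z_i)$, apply (b) to the saturated subsystem $C_\calf(Z_i)$ (saturation by Theorem \ref{t:NF(Q)}; $A$ remains weakly closed inside the subsystem) to obtain $(t,U,A_*)\in\WW{{C_\calf(Z_i)}}{A}$. If $A_*\nleq Z_i$, let $Z_{i+1}$ be a fully-centralized-in-$\calf$ $\calf$-conjugate of $Z_iA_*$ inside $A$ (Lemma \ref{l:A-w.cl.} keeps it inside $A$, Lemma \ref{l:f.cent.+} ensures full centralization); otherwise terminate with $Z=Z_i$. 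Termination is guaranteed since $|Z_i|$ strictly increases and is bounded by $|A|$. At termination, $U\cap A\le Z$ for each $U\in\UU{{C_\calf(Z)}}{A}$, and then $A_*=U\cap A$ for $(t,U,A_*)\in\WW{{C_\calf(Z)}}{A}$, since the $C_\calf(Z)$-morphism between $U\cap A\le Z$ and $A_*$ restricts to the identity on $U\cap A$. The main obstacle is the $p$-group argument in (a): standard maximality of $|U\cap A|$ does not instantly force $[U,A]\le D$ and one must carefully track the commutator structure in $\gen{A,U}$; a secondary difficulty is the precise order-matching in (b), where the maximality of $U$ must be combined delicately with the extension axiom.
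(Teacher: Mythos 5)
Your overall decomposition into (a)/(b)/(c) matches the paper's, and your preliminary observations (weakly closed $\Rightarrow A\nsg S$; Lemma \ref{l:s.cl.=>normal} forces failure of strong closure) are correct. However, there are genuine gaps in all three parts, and the single most important one is (a).

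In (a), you propose to maximize $|U\cap A|$ over $U\in A^\calf\sminus\{A\}$ and extract $[U,A]\le U\cap A$ by a bare $p$-group argument. This does not work: if $A\nsg S$ (the main case), taking $a\in A\sminus N_S(U)$ and setting $U'=aUa^{-1}$ gives $D=U\cap A\le U'\cap A$, but nothing forces the inclusion to be strict, so maximality yields no contradiction. The classical version of this argument (maximal Sylow intersections) uses Sylow's theorem in an ambient group $N_G(D)$, which has no direct fusion-system analogue without invoking saturation of a normalizer subsystem or, as the paper does, Alperin's fusion theorem (Theorem \ref{t:AFT}). The paper's route is to use Theorem \ref{t:AFT} to produce $R$ with $A\le R$ and $\alpha\in\autf(R)$ with $\alpha(A)\ne A$, then observe that $A$ and $\alpha(A)$ are both normal in $R$ (either because $A\nsg S$, or by choosing $R=N_S(A)$ and $\alpha=c_x$ with $x\in N_S(R)\sminus R$ when $A\nnsg S$), which gives $[A,\alpha(A)]\le A\cap\alpha(A)$ automatically. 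That fusion-theoretic input is the missing idea in your plan.

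In (b), your $A_*=\bar\varphi(U\cap A)$ is $\calf$-conjugate to $U\cap A$ but you have not verified $A_*\le C_A(t)$: since $\bar\varphi$ moves $t$ to $s$, what you can say is that $A_*$ is centralized by $s$, not by $t$. (Taking $A_*=U\cap A$ instead would fix this, since any $U\in\scru$ is abelian.) The order identity $|UA/A|=|C_{A/A_*}(t)|$ is asserted from maximality of $U$, but the claim that adjoining $x\in N_S(A)$ is possible ``exactly when $xA\in C_{A/A_*}(t)$'' is not established; the $\calf$-morphism $\bar\varphi$ constrains $x$ in a different way (via $N_\varphi$ and the requirement $\bar\varphi(x)\in A$) and the equivalence you want is not automatic. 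The paper instead fixes $V\in\scru$ containing $t$ with $|V\cap A|$ maximal, sets $A_*=V\cap A$ (which is $\le C_A(t)$ because $V$ is abelian), and builds a second subgroup $U=\varphi(N_A(A_*\gen{t}))$ by conjugating a fully normalized copy of $A_*\gen{t}$ into $A$; maximality of $|V\cap A|$ is what pins down $U\cap A$ and gives the order identity.

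In (c), your iteration terminates when the single triple $(t,U,A_*)$ returned by (b) satisfies $A_*\le Z_i$, but the conclusion requires $U\cap A\le Z$ for every $U\in\UU{{C_\calf(Z)}}{A}$, not just the one $U$ that (b) happens to hand you for one $t$. The paper takes $Z$ maximal among fully centralized subgroups of $A$ with $A\nnsg C_\calf(Z)$, and then for an arbitrary $U\in\UU{{C_\calf(Z)}}{A}$ reduces to the case where $U\cap A$ is fully centralized, finds (via Lemma \ref{l:A-w.cl.}(b)) an $\alpha\in\autf[0](A)$ with $\alpha|_{U\cap A}=\varphi|_{U\cap A}$, so that $\alpha^{-1}\varphi\in\Hom_{C_\calf(U\cap A)}(U,A)$; since $U\nleq A$ this gives $A\nnsg C_\calf(U\cap A)$, forcing $U\cap A=Z$ by maximality. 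This ``agree with $\varphi$ on $U\cap A$ and then descend to $C_\calf(U\cap A)$'' step is the crux of (c), and it is not present in your sketch.
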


\begin{proof} By Lemma \ref{l:s.cl.=>normal} and since $A\nnsg\calf$, $A$ 
is not strongly closed. So $\scru\ne\emptyset$ and $\scrt\ne\emptyset$ if 
$A\nsg S$, and we will show when proving (a) that this also holds if 
$A\nnsg S$. The last statement, and the claim $\scrw\ne\emptyset$, follow 
from (a) when $A$ is not weakly closed in $\calf$, and from (b) and (c) 
otherwise.

\smallskip

\noindent\textbf{(a) } If $A$ is not weakly closed in $\calf$, then there 
is $\varphi\in\homf(A,S)$ such that $\varphi(A)\ne A$. So by Theorem 
\ref{t:AFT} (Alperin's fusion theorem), there are $R\le S$ and 
$\alpha\in\autf(R)$ such that $A\le R$ and $\alpha(A)\ne A$. In the special 
case where $A\nnsg S$, we take $R=N_S(A)$, and set $\alpha=c_x^R$ for 
some $x\in N_S(R)\sminus R$. So in all cases, we can arrange that $A\nsg R$ 
and hence $\alpha(A)\le N_S(A)$. 


Set $U=\alpha(A)\in\scru$ and $A_*=U\cap A$. Then $[A,U]\le A_*$ since $A$ 
and $U$ are both normal in $R$. So for each $t\in U\sminus A\subseteq\scrt$, 
we have $A_*\le C_A(U)\le C_A(t)$ and $|UA/A| = |U/A_*| = |A/A_*| = 
|C_{A/A_*}(t)|$, proving that $(t,U,A_*)\in\scrw$.

\smallskip

\noindent\textbf{(b) } Assume $A$ is weakly closed in $\calf$ (in 
particular, $A\nsg S$). Fix $t\in\scrt$, and let $\scru_t$ be the set of 
all $U\in\scru$ such that $t\in U$. Choose $V\in\scru_t$ such that $|V\cap 
A|$ is maximal among all $|U\cap A|$ for $U\in\scru_t$. Set $A_*=V\cap A$ 
and $U_2^*=N_A(A_*\gen{t})$. Then $A_*\gen{t}\cap A\le V\cap A=A_*$, and so 
	\beqq U_2^*/A_* = \bigl\{x\in A\,\big|\,[x,t]\in 
	A_*\bigr\} \big/ A_* = C_{A/A_*}(t) \ne 1, \label{e:U2*/A*} \eeqq
where $C_{A/A_*}(t)\ne1$ since $A/A_*$ and $t$ both have $p$-power order. 

Choose $W\in (A_*\gen{t})^\calf$ such that $W$ is fully normalized in 
$\calf$. Then $W\le A$ by Lemma \ref{l:A-w.cl.}(a) and since $A$ is weakly 
closed. Let $\varphi\in\homf(N_S(A_*\gen{t}),S)$ be such that 
$\varphi(A_*\gen{t})=W$ (see Proposition \ref{p:Hom(NSP,S)}). 

Set $U=\varphi(U_2^*)$ and $U_1^*=\varphi^{-1}(U\cap A)$. Then 
	\[ \varphi(A_*) \le \varphi(U_2^*)\cap A = U\cap A = 
	\varphi(U_1^*), \]
so $A_*\le U_1^*\le U_2^*\le A$. Also, $U_1^*\gen{t}\in\scru_t$ since 
$\varphi(U_1^*\gen{t})=(U\cap A)\gen{\varphi(t)}\le A$, and hence 
	\[ |U_1^*| \le |U_1^*\gen{t}\cap A| \le |V\cap A| = |A_*| \]
by the maximality assumption on $V$. Thus $U_1^*=A_*<U_2^*$ where the 
strict inclusion holds by \eqref{e:U2*/A*}, and 
$A_*=U_1^*\in(U\cap A)^\calf$. 

Now, $U\cap A=\varphi(U_1^*)<\varphi(U_2^*)=U$, so $U\nleq A$. 
Since $U=\varphi(U_2^*)$ where $U_2^*\le A$, this shows that 
$U\in\scru$. Also, $U\cap A=\varphi(A_*)$, and so $UA/A\cong U/(U\cap 
A)\cong U_2^*/A_*=C_{A/A_*}(t)$. Thus $(t,U,A_*)\in\scrw$. 

\smallskip

\noindent\textbf{(c) } Again assume $A$ is weakly closed in $\calf$, and 
let $Z$ be maximal among all subgroups of $A$ fully centralized in 
$\calf$ such that $A\nnsg C_\calf(Z)$. Set $\calf_0=C_\calf(Z)$ 
and $S_0=C_S(Z)$ for short. Recall that $\calf_0$ is saturated 
since $Z$ is fully centralized in $\calf$ (Theorem \ref{t:NF(Q)}). 

Fix $U\in\UU{{\calf_0}}{A}$, choose a morphism $\varphi\in\homf[0](U,A)$, 
and set $A_*=U\cap A$. We must show that $A_*\le Z$. Since 
$UZ\in\UU{{\calf_0}}{A}$, we can assume $U\ge Z$. 

Choose $B_*\in(A_*)^{\calf_0}$ that is fully normalized in $\calf_0$. Then 
$B_*\le A$ by Lemma \ref{l:A-w.cl.}(a) and since $A$ is weakly closed. By 
Proposition \ref{p:Hom(NSP,S)}, there is 
$\chi\in\Hom_{\calf_0}(N_{S_0}(A_*),S_0)$ such that $\chi(A_*)=B_*$. Then 
$\chi(A)=A$ since $A$ is weakly closed, so 
$\chi\varphi(\chi|_U)^{-1}\in\homf[0](\chi(U),A)$ where 
$Z\le\chi(U)\nleq A$ and $B_*=\chi(U\cap A)=\chi(U)\cap A$, and where 
$B_*\le Z$ if and only if $A_*\le Z$. Upon replacing $U$ by $\chi(U)$ 
and $\varphi$ by $\chi\varphi(\chi|_U)^{-1}$, we are now reduced to showing 
that $A_*\le Z$ when $A_*=U\cap A$ is fully centralized in $\calf_0$, and 
hence in $\calf$ by Lemma \ref{l:f.cent.+}. 

By Lemma \ref{l:A-w.cl.}(b), there is an automorphism 
$\alpha\in\autf[0](A)$ such that $\alpha|_{A_*}=\varphi|_{A_*}$, hence such 
that $\alpha^{-1}\varphi\in\Hom_{C_\calf(A_*)}(U,A)$. Since $U\nleq A$, 
this implies that $A\nnsg C_\calf(A_*)$, and so $A_*=Z$ by the maximality 
assumption on $Z$. 

In particular, for each $(t,U,A_*)\in\WW{{\calf_0}}{A}$, since 
$U\cap A\le Z$ and $A_*\in(U\cap A)^{\calf_0}$, we have $U\cap 
A=A_*\le Z$.
\end{proof}

We now reformulate the criteria in Proposition \ref{p:not.str.cl.2} in 
terms of $A$ and $\autf(A)$ only; i.e., in terms that do not involve the 
fusion system $\calf$ or its Sylow group $S$.

\begin{Defi} \label{d:K&R}
Fix a finite abelian $p$-group $A$ and a $p$-subgroup $T\le\Aut(A)$. Set 
	\begin{align*} 
	\hRR[+]{T}{A} &= \bigl\{ (\tau,B,A_*) \,\big|\, 
	\tau\in T^\#,~  
	B\le T,~ \textup{$\gen{\tau}$ and $B$ isomorphic to subgroups of $A$,}~ 
	\\[-1mm] 
	&\hskip60mm 
	A_*\le C_A(\gen{B,\tau}),~ |B|\ge|C_{A/A_*}(\tau)| \bigr\} 
	\\[1mm]
	\hRR{T}{A} &= \bigl\{ (\tau,B,A_*)\in\hRR[+]{T}{A} 
	\,\big|\, |B|=|C_{A/A_*}(\tau)| \bigr\}.
	\end{align*}
Let $\RR{T}{A}$ be the largest subset 
$\calr\subseteq\hRR{T}{A}$ that satisfies the condition
	\beq \textup{for each $(\tau,B,A_*)\in\calr$ and each $\tau_1\in 
	B^\#$, there is $(\tau_1,B_1,A_{*1})\in\calr$.} \tag{$*$} \eeq
Similarly, let $\RR[+]{T}{A}$ be the largest subset $\calr\subseteq\hRR[+]{T}{A}$ 
that satisfies ($*$).
\end{Defi}

If $\scrr_1$ and $\scrr_2$ are two subsets of $\hRR{T}{A}$ or of 
$\hRR[+]{T}{A}$ that satisfy ($*$), then their union also satisfies ($*$). 
So there are unique largest subsets $\RR{T}{A}\subseteq\RR[+]{T}{A}$ that 
satisfy the condition.

\begin{Prop} \label{p:not.str.cl.3a}
Let $\calf$ be a saturated fusion system over a finite $p$-group $S$, and 
assume $A\le S$ is an abelian subgroup such that $C_S(A)=A$ and 
$A\nnsg\calf$. Then $\RR{\Aut_S(A)}{A}\ne\emptyset$, and hence 
$\RR[+]{\Aut_S(A)}{A}\ne\emptyset$. More precisely, the following hold, 
where $T=\Aut_S(A)$:
\begin{enuma} 

\item In all cases, if $(t,U,A_*)\in\WW{\calf}{A}$ is such that $U\cap 
A=A_*$, then $(c_t^A,\Aut_U(A),A_*)\in\hRR{T}{A}$. 

\item If $A$ is not weakly closed in $\calf$, then there is a subgroup $U\in 
A^\calf\sminus\{A\}$ such that $(c_t^A,\Aut_U(A),A\cap 
U)\in\RR{T}{A}$ for each $t\in U\sminus A$.

\item If $A$ is weakly closed in $\calf$, then there is a subgroup 
$Z\le A$ fully centralized in $\calf$ such that $A\nnsg 
C_\calf(Z)$, and such that for each 
$t\in\VV{{C_\calf(Z)}}{A}$, there is $U\in\UU{{C_\calf(Z)}}{A}$ 
such that 
	\[ U\cap A\le Z \qquad\textup{and}\qquad 
	(c_t^A,\Aut_U(A),U\cap A)\in\RR{C_T(Z)}{A} 
	\subseteq \RR{T}{A}. \]

\end{enuma}
\end{Prop}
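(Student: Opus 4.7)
The plan is to translate Proposition~\ref{p:not.str.cl.2} directly into the language of Definition~\ref{d:K&R}. Throughout I write $T=\Aut_S(A)$ and use the identification $T\cong N_S(A)/A$ coming from $C_S(A)=A$, under which $c_s^A$ corresponds to $sA$.

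For part~(a), I would verify each clause of the definition of $\hRR{T}{A}$ for the triple $(c_t^A,\Aut_U(A),A_*)$. Most are routine: $c_t^A\ne1$ because $t\notin C_S(A)=A$; $\Aut_U(A)\le T$ because $U\le N_S(A)$; the groups $\gen{c_t^A}$ and $\Aut_U(A)\cong U/(U\cap A)$ are subquotients of $A$ via any injective $\varphi\in\homf(U,A)$, hence isomorphic to subgroups of $A$ by the structure theorem for finite abelian $p$-groups; and $|\Aut_U(A)|=|UA/A|=|C_{A/A_*}(t)|$ is read off from the definition of $\WW{\calf}{A}$. The one nontrivial step---and the main obstacle of the whole proof---is showing $A_*\le C_A(\gen{\Aut_U(A),c_t^A})$. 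The inclusion $A_*\le C_A(c_t^A)$ is built into $\WW{\calf}{A}$, so the remaining content is that $A_*=U\cap A$ is centralized by \emph{all} of $U$. For this, for any $u\in U$ and $a\in U\cap A$, the identity $\varphi([u,a])=[\varphi(u),\varphi(a)]=1$ (valid because $\varphi(u),\varphi(a)\in A$ and $A$ is abelian) combined with injectivity of $\varphi$ forces $[u,a]=1$.

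Part~(b) then follows immediately. Proposition~\ref{p:not.str.cl.2}(a) provides a single $U\in A^\calf\sminus\{A\}$ such that $(t,U,U\cap A)\in\WW{\calf}{A}$ for every $t\in U\sminus A$, and part~(a) places each triple $(c_t^A,\Aut_U(A),U\cap A)$ into $\hRR{T}{A}$. The collection
\[ \calr=\bigl\{(c_t^A,\Aut_U(A),U\cap A)\,\big|\,t\in U\sminus A\bigr\} \]
satisfies condition $(*)$ because every $\tau_1\in\Aut_U(A)^\#$ has the form $c_{t_1}^A$ for some $t_1\in U\sminus A$, and the corresponding triple (with the same $B$ and $A_*$) lies in $\calr$. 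Hence $\calr\subseteq\RR{T}{A}$.

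For part~(c), I would apply Proposition~\ref{p:not.str.cl.2}(c) to produce $Z$, set $\calf_0=C_\calf(Z)$ (saturated by Theorem~\ref{t:NF(Q)}, with $A$ still abelian, weakly closed, and not normal in $\calf_0$), and for each $t\in\VV{\calf_0}{A}$ apply Proposition~\ref{p:not.str.cl.2}(b) inside $\calf_0$ to obtain $U\in\UU{\calf_0}{A}$ with $(t,U,A_*)\in\WW{\calf_0}{A}$; the closing sentence of Proposition~\ref{p:not.str.cl.2}(c) then forces $A_*=U\cap A\le Z$. Part~(a) applied inside $\calf_0$ yields $(c_t^A,\Aut_U(A),U\cap A)\in\hRR{T}{A}$, and since $U\le C_S(Z)$ we have $\Aut_U(A)\le C_T(Z)$. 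To verify $(*)$, for any $\tau_1=c_{t_1}^A\in\Aut_U(A)^\#$ the element $t_1\in U\sminus A$ lies in $\VV{\calf_0}{A}$ (restrict any $\Hom_{\calf_0}$-morphism $U\to A$ to $\gen{t_1}$), so the construction can be iterated. Taking the union of the triples produced over all $t\in\VV{\calf_0}{A}$ then yields a subset of $\hRR{C_T(Z)}{A}$ closed under $(*)$, hence contained in $\RR{C_T(Z)}{A}\subseteq\RR{T}{A}$.
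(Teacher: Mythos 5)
Your proof is correct and follows essentially the same route as the paper: translate each clause of the definition of $\hRR{T}{A}$ into the data coming from $\WW{\calf}{A}$, using $C_S(A)=A$ to identify $\Aut_U(A)\cong UA/A$, then build a set closed under condition~$(*)$ out of all the triples produced by Proposition~\ref{p:not.str.cl.2}(a) (or, in the weakly closed case, out of those produced inside $C_\calf(Z)$). The only cosmetic difference is that the paper treats $U\cap A\le C_A(\Aut_U(A))$ as immediate (because $U$, being $\calf$-isomorphic to a subgroup of $A$, is abelian), whereas you unpack it via an explicit injective homomorphism; and your phrase ``via any injective $\varphi\in\homf(U,A)$'' should really invoke the separate morphism $\gen{t}\to A$ guaranteed by $t\in\VV{\calf}{A}$ when justifying that $\gen{c_t^A}$ embeds in $A$, since $t$ need not lie in $U$ in part~(a).
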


\begin{proof} Let $\calf$ be a saturated fusion system over a finite 
$p$-group $S$ as above. Thus $A\le S$ is such that $C_S(A)=A$ and 
$A\nnsg\calf$. Once we have proven points (a), (b), and (c), it will then 
follow immediately that $\RR{T}{A}\ne\emptyset$.

\smallskip

\noindent\textbf{(a) } Fix $(t,U,A_*)\in\WW{\calf}{A}$ such that 
$A_*=U\cap A$, and set $\tau=c_t^A\in T$ and $B=\Aut_U(A)\le T$. Then 
$A_*=U\cap A\le C_A(B)$. Also, 
by definition of $\WW{\calf}{A}$, we have $A_*\le C_A(t)=C_A(\tau)$ and 
$|UA/A|=|C_{A/A_*}(t)|=|C_{A/A_*}(\tau)|$. 

By definition of $\VV{\calf}{A}$ and $\UU{\calf}{A}$, the subgroups 
$\gen{\tau}$ and $B$ are both isomorphic to subgroups of $A$. So to prove 
that $(\tau,B,A_*)\in\hRR{T}{A}$, it remains only to show that 
$|UA/A|=|B|$. But $C_S(A)=A$ by assumption, so $|B|=|\Aut_U(A)|=|UA/A|$.

\smallskip

\noindent\textbf{(b) } If $A$ is not weakly closed in $\calf$, then by 
Proposition \ref{p:not.str.cl.2}(a), there is $U\in A^\calf\sminus\{A\}$ 
such that $[U,A]\le U\cap A$, and such that $(t,U,U\cap A) 
\in\WW{\calf}{A}$ for each $t\in U\sminus A$. Thus $(c_t^A,\Aut_U(A),U\cap 
A)\in\hRR{\calf}{A}$ for each $t\in U\sminus A$ by (a). 

Now set $\scrr = \{ (\tau,\Aut_U(A),U\cap A) \,|\, \tau\in 
B^\# \}\subseteq\hRR{\calf}{A}$. Then $\scrr$ satisfies condition ($*$) in 
Definition \ref{d:K&R}, so $\RR{T}{A}\supseteq\scrr\ne\emptyset$. 

\smallskip

\noindent\textbf{(c) } Assume $A$ is weakly closed in $\calf$, and let 
$Z\le A$ be as in Proposition \ref{p:not.str.cl.2}(c). Thus $Z$ 
is fully centralized in $\calf$, $A\nnsg C_\calf(Z)$, and $U\cap A\le Z$ 
for each $U\in\scru_{C_\calf(Z)}(A)$.

Let $\scrt=\VV{{C_\calf(Z)}}{A}\ne\emptyset$, 
$\scru=\UU{{C_\calf(Z)}}{A}\ne\emptyset$, and 
$\scrw=\WW{{C_\calf(Z)}}{A}\ne\emptyset$ be as in Proposition 
\ref{p:not.str.cl.2}, and set 
	\[ \scrr = \bigl\{ (c_t^A,\Aut_U(A),U\cap 
	A) \,\big|\, t\in\scrt,~ U\in\scru,~ (t,U,A_*)\in\scrw \bigr\}, \]
where $A_*\in(U\cap A)^{C_\calf(Z)}$ and hence $A_*=U\cap A$ since $U\cap 
A\le Z$. By (a), $\scrr\subseteq\hRR{C_T(Z)}{A}$. By Proposition 
\ref{p:not.str.cl.2}(b,c), for each $t\in\scrt$, there is $U\in\scru$ such 
that $(t,U,U\cap A)\in\scrw$. So $\scrr\ne\emptyset$, and condition ($*$) 
in Definition \ref{d:K&R} holds for the pair $\scrr$. Thus $\scrr \subseteq 
\RR{C_T(Z)}{A} \subseteq \RR{T}{A}$. 
\end{proof}

The next proposition is our main reason for defining $\RR[+]{T}{A}$.

\begin{Prop} \label{p:not.str.cl.3c}
Fix a finite abelian $p$-group $A$ and a $p$-subgroup $T\le\Aut(A)$. 
Let $A_1<A_2\le A$ be $T$-invariant subgroups such that $T$ 
acts faithfully on $A_2/A_1$. If $\RR[+]{T}{A}\ne\emptyset$, then 
$\RR[+]{T}{A_2/A_1}\ne\emptyset$. More precisely, 
	\[ \RR[+]{T}{A_2/A_1} \supseteq 
	\bigl\{(\tau,B,(A_*A_1\cap A_2)/A_1) \,\big|\, 
	(\tau,B,A_*)\in\RR[+]{T}{A} \bigr\}. \]
\end{Prop}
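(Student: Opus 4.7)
Set $\bar A := A_2/A_1$, and for $(\tau,B,A_*)\in\RR[+]{T}{A}$ set $\bar A_* := (A_*A_1\cap A_2)/A_1$. The plan is to show that the set $\scrr:=\{(\tau,B,\bar A_*)\,|\,(\tau,B,A_*)\in\RR[+]{T}{A}\}$ of image triples is contained in $\hRR[+]{T}{\bar A}$ and satisfies condition $(*)$ of Definition \ref{d:K&R}; by the defining maximality of $\RR[+]{T}{\bar A}$, this will then give the claimed inclusion.

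Most of the defining conditions for membership in $\hRR[+]{T}{\bar A}$ transfer readily. The conditions $\tau\in T^\#$ and $B\le T$ are inherited verbatim, and the requirements that $\gen\tau$ and $B$ be isomorphic to subgroups of $\bar A$ follow from the analogous conditions in $A$ using the faithful action of $T$ on $\bar A$. The centralizer condition $\bar A_*\le C_{\bar A}(\gen{B,\tau})$ is a direct calculation: for $\sigma\in\gen{B,\tau}$ and $x=a_*+a_1\in A_*A_1\cap A_2$ with $a_*\in A_*$, $a_1\in A_1$, one has $\sigma(x)-x=\sigma(a_1)-a_1\in A_1$ since $\sigma$ fixes $A_*$ pointwise and preserves $A_1$, so $\sigma$ acts trivially on the class of $x$ in $\bar A$.

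The main technical step is the numerical inequality $|B|\ge|C_{\bar A/\bar A_*}(\tau)|$. Setting $M:=A/A_*$ and $M_i:=(A_i+A_*)/A_*\le M$ for $i=1,2$, the second isomorphism theorem (using $A_1\le A_2$) identifies $\bar A/\bar A_*\cong M_2/M_1$ as $\tau$-modules. The inclusion $M_2\le M$ gives $|M_2^\tau|\le|M^\tau|$; for the quotient, the long exact cohomology sequence for $\gen\tau$ applied to $0\to M_1\to M_2\to M_2/M_1\to 0$ yields $|(M_2/M_1)^\tau|\le|M_2^\tau/M_1^\tau|\cdot|H^1(\gen\tau,M_1)|$. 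For finite abelian $p$-modules under a cyclic $p$-group action, the computation $|H^1(\gen\tau,M_1)|=|M_1^\tau|/|N_\tau M_1|$, with $N_\tau=1+\tau+\cdots+\tau^{|\tau|-1}$, shows $|H^1(\gen\tau,M_1)|\le|M_1^\tau|$. Combining these, $|C_{\bar A/\bar A_*}(\tau)|=|(M_2/M_1)^\tau|\le|M_2^\tau|\le|M^\tau|=|C_{A/A_*}(\tau)|\le|B|$.

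Finally, $\scrr$ inherits condition $(*)$ directly: given $(\tau,B,\bar A_*)\in\scrr$ arising from $(\tau,B,A_*)\in\RR[+]{T}{A}$ and an element $\tau_1\in B^\#$, applying $(*)$ inside $\RR[+]{T}{A}$ produces some $(\tau_1,B_1,A_{*,1})\in\RR[+]{T}{A}$, whose image $(\tau_1,B_1,\overline{A_{*,1}})$ lies in $\scrr$ with first coordinate $\tau_1$. Maximality of $\RR[+]{T}{\bar A}$ then yields $\scrr\subseteq\RR[+]{T}{\bar A}$, proving the proposition. The main obstacle is the cohomological comparison of centralizer sizes across the subquotient $M_2/M_1$, resting on the Herbrand-quotient-style bound $|H^1(\gen\tau,M_1)|\le|M_1^\tau|$ for a cyclic $p$-group acting on a finite abelian $p$-group.
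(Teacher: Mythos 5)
Your proof follows essentially the same route as the paper. The key numerical step, showing $|C_{A_2/(A_*A_1\cap A_2)}(\tau)|\le|C_{A/A_*}(\tau)|\le|B|$, is established by you via the long exact cohomology sequence and the Herbrand-quotient observation $|H^1(\gen{\tau},M_1)|=|H^2(\gen{\tau},M_1)|\le|M_1^\tau|$; this is precisely the content of the paper's Lemma~\ref{l:C(A/A0)G}, which the paper invokes as a black box. Your subquotient $(A_2+A_*)/(A_1+A_*)$ of $A/A_*$ and the paper's $A_2/(A_*A_1\cap A_2)$, realized as a quotient of $A_2/(A_*\cap A_2)\cong A_2A_*/A_*$, are the same $\tau$-module, so the two computations coincide. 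Your direct verification of $(A_*A_1\cap A_2)/A_1\le C_{A_2/A_1}(\gen{B,\tau})$ is correct, and the transfer of condition $(*)$ is routine, as you say.

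The one place where you go beyond the paper is also where there is a genuine gap. You assert that the condition ``$\gen{\tau}$ and $B$ isomorphic to subgroups of $A_2/A_1$'' follows ``from the analogous conditions in $A$ using the faithful action of $T$ on $A_2/A_1$.'' This is not so: faithfulness yields only an embedding $T\hookrightarrow\Aut(A_2/A_1)$, not an embedding of subgroups of $T$ into the abelian group $A_2/A_1$, and the claim can actually fail. Take $A=E_{p^6}$, $A_1=0$, $A_2\cong E_{p^5}$, and $T=B\cong E_{p^6}$ a maximal elementary abelian unipotent subgroup of $\Aut(A_2)\cong\GL_5(\F_p)$, extended to act trivially on a complement of $A_2$ in $A$. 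Then $\bigl\{(\tau,B,0)\,\big|\,\tau\in B^\#\bigr\}\subseteq\RR[+]{T}{A}$ (the numerical inequality is automatic since $|B|=|A|$), yet $B\cong E_{p^6}$ does not embed in $A_2/A_1\cong E_{p^5}$, so these triples are not in $\hRR[+]{T}{A_2/A_1}$ and the displayed inclusion fails. The paper's own proof is also silent on this membership requirement, so this is not a defect peculiar to your write-up; but your attributed justification via faithfulness is incorrect, and the embedding condition should be treated as an additional hypothesis (one that does hold in all of the paper's applications, where $A$ is elementary abelian and $\rk(T)$ is small relative to $\rk(A_2/A_1)$) rather than a consequence.
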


\begin{proof} Assume $A_1<A_2\le A$ are as above. If 
$(\tau,B,A_*)\in\hRR[+]{T}{A}$, then 
	\[ |C_{A_2/(A_*A_1\cap A_2)}(\tau)|\le|C_{A_2/(A_*\cap A_2)}(\tau)|
	= |C_{A_2A_*/A_*}(\tau)|\le |C_{A/A_*}(\tau)| \le |B|: \]
the first inequality by Lemma \ref{l:C(A/A0)G} and the second by inclusion. 
So $(\tau,B,(A_*A_1\cap A_2)/A_1)\in \hRR[+]{T}{A_2/A_1}$. 

In particular, if $\scrr$ satisfies condition ($*$) in Definition 
\ref{d:K&R} for the pair $(T,A)$, then $\scrr'$ satisfies ($*$) 
for $(T,A_2/A_1)$ where 
	\beq \scrr' = \bigl\{(\tau,B,(A_*A_1\cap A_2)/A_1) \,\big|\, 
	(\tau,B,A_*)\in\scrr \bigr\}. \qedhere \eeq
\end{proof}

It remains to find some strong necessary conditions on 
$A$ and $T$ for the set $\RR{T}{A}$ or $\RR[+]{T}{A}$ to be nonempty.

\begin{Prop} \label{p:not.str.cl.3b}
Fix a finite abelian $p$-group $A$ and a subgroup $T\le\Aut(A)$. 
Then for each $(\tau,B,A_*)\in\hRR{T}{A}$, 
	\beqq 
	|B| = \frac{|A|}{|A_*[\tau,A]|} 
	\quad\textup{and}\quad 
	\frac{|B|}{|C_A(\tau)\cap[\tau,A]|} =
	\frac{|C_A(\tau)[\tau,A]|}{|A_*[\tau,A]|}, 
	\label{e:B1} \eeqq
while for each $(\tau,B,A_*)\in\hRR[+]{T}{A}$, 
	\beqq 
	|B| \ge \frac{|A|}{|A_*[\tau,A]|} 
	\quad\textup{and}\quad 
	\frac{|B|}{|C_A(\tau)\cap[\tau,A]|} \ge 
	\frac{|C_A(\tau)[\tau,A]|}{|A_*[\tau,A]|} \ge 1. 
	\label{e:B1+} \eeqq
In particular, for each $(\tau,B,A_*)\in\hRR[+]{T}{A}$, 
	\beqq |B| \ge |C_A(\tau)\cap[\tau,A]|, \label{e:B2} \eeqq
and $|B|\ge|[\tau,A]|$ if $p=2$ and $A$ is elementary abelian. 
\end{Prop}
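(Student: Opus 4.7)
The plan is to compute $|C_{A/A_*}(\tau)|$ directly in terms of $|A|$, $|A_*|$, $|[\tau,A]|$, and $|C_A(\tau)|$, and then read off the bounds on $|B|$ from the defining constraints in Definition~\ref{d:K&R}. The hypothesis $A_*\le C_A(\gen{B,\tau})$ implies in particular that $\tau$ fixes $A_*$ pointwise, so $\tau$ descends to an automorphism of $A/A_*$. Since $A$ is abelian, the map $\sigma\colon A\to A$ defined by $\sigma(a)=\tau(a)a^{-1}$ is a homomorphism with kernel $C_A(\tau)$ and image $[\tau,A]$. Passing to the quotient, the induced map on $A/A_*$ has kernel $C_{A/A_*}(\tau)$ and image $[\tau,A]A_*/A_*$, whence
\[
|C_{A/A_*}(\tau)|=\frac{|A/A_*|}{|[\tau,A]A_*/A_*|}=\frac{|A|}{|A_*[\tau,A]|}.
\]
The first identity in \eqref{e:B1} and the first inequality in \eqref{e:B1+} now follow from the defining conditions $|B|=|C_{A/A_*}(\tau)|$ and $|B|\ge|C_{A/A_*}(\tau)|$ respectively.

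For the second identity in \eqref{e:B1}, I would combine the first isomorphism theorem $|A|=|C_A(\tau)|\cdot|[\tau,A]|$ with the standard product formula $|X||Y|=|XY|\cdot|X\cap Y|$, applied to $X=C_A(\tau)$ and $Y=[\tau,A]$ inside the abelian group $A$. This gives $|A|=|C_A(\tau)[\tau,A]|\cdot|C_A(\tau)\cap[\tau,A]|$; substituting into the first identity and dividing by $|C_A(\tau)\cap[\tau,A]|$ produces the stated equality. The second inequality in \eqref{e:B1+} is proved identically, and the trailing bound $|C_A(\tau)[\tau,A]|\ge|A_*[\tau,A]|$ is immediate from $A_*\le C_A(\tau)$.

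Inequality \eqref{e:B2} is an immediate consequence of \eqref{e:B1+}. For the final claim I would invoke the defining requirement that $\gen{\tau}$ embed into $A$: when $p=2$ and $A$ is elementary abelian, this forces $\tau$ to have order $2$. In characteristic $2$ we then have $(\tau-1)^2=\tau^2-2\tau+1=0$ as an endomorphism of $A$, so $[\tau,A]=(\tau-1)(A)$ is contained in $\ker(\tau-1)=C_A(\tau)$. Hence $C_A(\tau)\cap[\tau,A]=[\tau,A]$, and \eqref{e:B2} yields $|B|\ge|[\tau,A]|$.

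There is essentially no serious obstacle; the argument reduces to counting orders of subgroups of the abelian group $A$, combined with the two standard identities above. The only step that requires a little care is extracting the bound on $\ord(\tau)$ from the embedding requirement in Definition~\ref{d:K&R}, which is precisely what makes the elementary abelian $p=2$ bound work.
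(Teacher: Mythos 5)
Your proof is correct and follows essentially the same route as the paper's: both arguments rest on the observation that $\varphi_\tau(a)=[\tau,a]$ is an endomorphism of the abelian group $A$ with kernel $C_A(\tau)$ and image $[\tau,A]$, derive $|C_{A/A_*}(\tau)|=|A|/|A_*[\tau,A]|$ from this (the paper computes $\varphi_\tau^{-1}(A_*)/A_*$ directly, you pass to the induced map on $A/A_*$ — an immaterial difference), and then rewrite $|A|$ via the standard product formula $|A|=|C_A(\tau)[\tau,A]|\cdot|C_A(\tau)\cap[\tau,A]|$. Your explicit derivation of $|\tau|=2$ from the embedding requirement, and of $[\tau,A]\le C_A(\tau)$ from $(\tau-1)^2=0$ in characteristic $2$, is a welcome unpacking of the one-line remark that closes the paper's proof.
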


\begin{proof} For each $\tau\in T^\#$, let $\varphi_{\tau}\in\End(A)$ 
be the map $\varphi_{\tau}(a)=[\tau,a]$. For each $A_*\le C_A(\tau)$, we 
have $C_A(\tau)=\Ker(\varphi_{\tau})$ and 
$C_{A/A_*}(\tau)=\varphi_{\tau}^{-1}(A_*)/A_*$, and hence 
	\beqq \begin{split} 
	|C_{A/A_*}(\tau)| 
	= \frac{|C_A(\tau)|\cdot|A_*\cap[\tau,A]|}{|A_*|} 
	&= \frac{|C_A(\tau)|\cdot|[\tau,A]|}{|A_*[\tau,A]|} 
	= \dfrac{|A|}{|A_*[\tau,A]|} \\[2mm]
	&\hskip20mm=\dfrac{|C_A(\tau)[\tau,A]| \cdot|C_A(\tau)\cap[\tau,A]|}
	{|A_*[\tau,A]|}. 
	\label{e:B3} \end{split} \eeqq
Since $|B|\ge|C_{A/A_*}(\tau)|$ for each 
$(\tau,B,A_*)\in\hRR[+]{T}{A}$ with equality if 
$(\tau,B,A_*)\in\hRR{T}{A}$, points \eqref{e:B1} and \eqref{e:B1+} 
follow immediately from \eqref{e:B3} (and since $A_*\le C_A(\tau)$). 
Inequality \eqref{e:B2} follows from \eqref{e:B1+}, and the last statement 
holds since $[\tau,A]\le C_A(\tau)$ if $p=2$ and $A$ is elementary abelian.
\end{proof}

The following corollary describes one easy consequence of the above 
results. 

\begin{Cor} \label{c:not.str.cl.}
Fix a finite abelian $p$-group $A$ and a $p$-subgroup $T\le\Aut(A)$ such 
that $\RR[+]{T}{A}\ne\emptyset$. Then there is $B_0\le T$, isomorphic to a 
subgroup of $A$, such that $|B_0|\ge|C_A(\tau)\cap[\tau,A]|$ for each 
$\tau\in B_0^\#$.
\end{Cor}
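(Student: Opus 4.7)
The plan is to take $B_0$ to be the second coordinate of a triple in $\RR[+]{T}{A}$ for which this second coordinate has maximal order. More precisely, I would fix $(\tau,B,A_*)\in\RR[+]{T}{A}$ with $|B|$ as large as possible among all triples in the (finite, nonempty) set $\RR[+]{T}{A}$, and then set $B_0 := B$.

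By the definition of $\hRR[+]{T}{A}\supseteq\RR[+]{T}{A}$, the subgroup $B\le T$ is required to be isomorphic to a subgroup of $A$, so the first requirement on $B_0$ is immediate. For the inequality, fix $\tau_1\in B_0^\#$. The defining closure condition $(*)$ for $\RR[+]{T}{A}$ in Definition \ref{d:K&R}, applied to $(\tau,B,A_*)$ and $\tau_1\in B^\#=B_0^\#$, yields some triple $(\tau_1,B_1,A_{*1})\in\RR[+]{T}{A}$. Inequality \eqref{e:B2} from Proposition \ref{p:not.str.cl.3b} then gives $|B_1|\ge|C_A(\tau_1)\cap[\tau_1,A]|$, while the maximality of $|B|$ forces $|B_0|=|B|\ge|B_1|$. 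Combining these two inequalities produces the desired bound $|B_0|\ge|C_A(\tau_1)\cap[\tau_1,A]|$.

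There is no serious obstacle here: the corollary is essentially a repackaging of the two facts already established, namely the closure property $(*)$ enjoyed by $\RR[+]{T}{A}$ and the bound \eqref{e:B2}. The one conceptual point worth flagging is that $(*)$ only produces a pointwise replacement triple (a $B_1$ depending on the chosen $\tau_1$), not a single $B$ that works uniformly across all $\tau_1\in B^\#$; choosing $B$ of maximal order is precisely the device that converts this pointwise closure statement into the uniform bound claimed by the corollary.
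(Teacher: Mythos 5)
Your proof is correct and is essentially the paper's argument in dual form: the paper chooses a triple $(\tau_0,B_0,A_{*0})\in\RR[+]{T}{A}$ maximizing $|C_A(\tau_0)\cap[\tau_0,A]|$ and then applies \eqref{e:B2} to that fixed triple, whereas you maximize $|B|$ and apply \eqref{e:B2} to the auxiliary triple produced by condition~$(*)$. Both proofs rest on exactly the same two ingredients, $(*)$ and \eqref{e:B2}, combined via an extremal choice, so there is no meaningful difference in substance or difficulty.
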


\begin{proof} Assume $\RR[+]{T}{A}\ne\emptyset$. Choose 
$(\tau_0,B_0,A_{*0})\in\RR[+]{T}{A}$ such that 
$|C_A(\tau_0)\cap[\tau_0,A]|$ is the largest possible. By condition 
($*$) in Definition \ref{d:K&R}, for each $\tau\in B_0^\#$, there is 
$(\tau,B,A_*)\in\RR[+]{T}{A}$, and hence 
	\[ |C_A(\tau)\cap[\tau,A]| \le |C_A(\tau_0)\cap[\tau_0,A]| 
	\le |B_0|, \]
where the second inequality holds by \eqref{e:B2}. 
\end{proof}

We can think of the inequality $|B_0|\ge|C_A(\tau)\cap[\tau,A]|$ in 
Corollary \ref{c:not.str.cl.} as a generalization of the condition 
$|Z(S)\cap[S,S]|=p$ in \cite[Lemma 2.3(b)]{indp1}. More precisely, when 
$A$ has index $p$ in $S$ and $S$ is 
nonabelian, the corollary says that $|C_A(\tau)\cap[A,\tau]|=p$ 
for $\tau\in S\sminus A$, and hence that $|Z(S)\cap[S,S]|=p$.

We next look at the case where $A$ is elementary abelian. For 
$\tau\in\End(A)$, we regard $A$ as an $\F_p[X]$-module, and let the 
``Jordan blocks'' for $\tau$ be the factors under some decomposition of $A$ 
as a product of indecomposable submodules. As usual, by ``nontrivial Jordan 
blocks'' we really mean ``Jordan blocks with nontrivial action''. 

The following notation will be used when reformulating Corollary 
\ref{c:not.str.cl.} in terms of Jordan blocks. 

\begin{Not} \label{d:JV(x)}
Let $A$ be an elementary abelian $p$-group, and let $\tau\in\Aut(A)$ be an 
automorphism of $p$-power order. Set 
$\scrj_A(\tau)=\rk(C_A(\tau)\cap[\tau,A])$: the number of nontrivial Jordan 
blocks for the action of $\tau$ on $A$. 
\end{Not}

In these terms, Corollary \ref{c:not.str.cl.} takes the following form 
when $A$ is elementary abelian:

\begin{Cor} \label{c:not.str.cl.2}
Assume $\Gamma$ is a finite group such that $\Gamma=O^{p'}(\Gamma)$, and 
let $A$ be a finite faithful $\F_p\Gamma$-module. Assume there is a 
saturated fusion system $\calf$ over a finite $p$-group $S$ that realizes 
$(\Gamma,A)$ as in Definition \ref{d:realize}. Then there are $m\ge1$ and 
an elementary abelian $p$-subgroup $B\le\Gamma$ of rank $m$ such 
that $\scrj_A(\tau)\le m$ for each $\tau\in B^\#$. 
\end{Cor}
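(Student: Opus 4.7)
The plan is to interpret Corollary \ref{c:not.str.cl.} in the rank language of Notation \ref{d:JV(x)}. Set $T=\Aut_S(A)$. Since $(\Gamma,A)$ is realized by $\calf$ in the sense of Definition \ref{d:realize}, the subgroup $A\le S$ is abelian with $C_S(A)=A$, $A\nnsg\calf$, and $\autf(A)=\Gamma$; in particular $T\le\Gamma$. Proposition \ref{p:not.str.cl.3a} then yields $\RR{T}{A}\ne\emptyset$, and a fortiori $\RR[+]{T}{A}\ne\emptyset$.

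Applying Corollary \ref{c:not.str.cl.} produces a subgroup $B\le T$, isomorphic to a subgroup of $A$, such that $|B|\ge|C_A(\tau)\cap[\tau,A]|$ for every $\tau\in B^\#$. Because $A$ is an elementary abelian $p$-group and $B$ embeds into $A$, $B$ is elementary abelian, and $B\le T\le\Gamma$. Setting $m=\rk(B)$, the displayed inequality reads $p^{\scrj_A(\tau)}\le p^m$, which is exactly $\scrj_A(\tau)\le m$ for each $\tau\in B^\#$.

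The only delicate point is that $m\ge1$, i.e.\ that $B\ne1$. Tracing through the proof of Corollary \ref{c:not.str.cl.}, $B$ arises as the second coordinate of some $(\tau_0,B,A_{*0})\in\RR[+]{T}{A}$ with $\tau_0\ne1$. Since $T\le\Aut(A)$ acts faithfully on $A$, $[\tau_0,A]$ is nontrivial; it is a $\tau_0$-invariant elementary abelian $p$-subgroup of $A$ on which the $p$-element $\tau_0$ acts, so $C_A(\tau_0)\cap[\tau_0,A]=C_{[\tau_0,A]}(\tau_0)\ne1$, and hence $|B|\ge p$. The substantive content of the corollary is entirely contained in Propositions \ref{p:not.str.cl.3a} and \ref{p:not.str.cl.3b}; the present statement is a short translation from orders to ranks, and the only potential pitfall is the nontriviality of $B$, which is handled by the faithful action of $\tau_0$.
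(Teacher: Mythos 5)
Your proof is correct and follows essentially the same route as the paper: set $T=\Aut_S(A)$, invoke Proposition \ref{p:not.str.cl.3a} to get $\RR[+]{T}{A}\ne\emptyset$, apply Corollary \ref{c:not.str.cl.}, and translate orders into ranks using that $A$ (and hence any $B$ isomorphic to a subgroup of $A$) is elementary abelian. Your explicit justification of $m\ge1$ --- that the $B$ produced by Corollary \ref{c:not.str.cl.} satisfies $|B|\ge|C_A(\tau_0)\cap[\tau_0,A]|\ge p$ by \eqref{e:B2} together with faithfulness of $T$ on $A$ --- fills in a detail the paper leaves implicit, and is a worthwhile addition.
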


\begin{proof} Since $\calf$ realizes $(\Gamma,A)$, we can arrange that 
$A\le S$, $A\nnsg\calf$, and $\autf(A)=\Gamma$. Set 
$T=\Aut_S(A)\in\sylp\Gamma$. Then $\RR[+]{T}A\ne\emptyset$ by Proposition 
\ref{p:not.str.cl.3a}. So by Corollary \ref{c:not.str.cl.}, there is an 
elementary abelian $p$-subgroup $B\le \Gamma$ such that 
$|B|\ge|C_{A}(\tau)\cap[\tau,A]|$ for all $\tau\in B^\#$. Thus $\rk(B)\ge 
\rk\bigl(C_{A}(\tau)\cap[A,\tau]\bigr) = \scrj_{A}(\tau)$ for each $\tau\in 
B^\#$. 
\end{proof}

The special case of fusion realizability when $|T|=p$ was already handled 
in the earlier papers \cite{indp1} and \cite{indp2}. We state the main 
conditions found in those papers: 

\begin{Lem} \label{l:indp}
Fix a finite abelian $p$-group $A$ and subgroups $\Gamma\le\Aut(A)$ and 
$T\in\sylp\Gamma$, and assume that $|T|=p$ and $|[T,A]|>p$. If $(\Gamma,A)$ 
is fusion realizable, then 
	\[ |C_A(T)\cap[T,A]|=p \qquad\textup{and}\qquad 
	|N_\Gamma(T)/C_\Gamma(T)|=p-1. \]
\end{Lem}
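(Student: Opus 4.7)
The plan is to extract a saturated fusion system $\calf$ over some $p$-group $S$ that realizes $(\Gamma,A)$, so that $A\le S$, $C_S(A)=A$, $\autf(A)=\Gamma$, and $A\nnsg\calf$. After replacing $A$ by a $\calf$-conjugate if necessary (abelianness preserves $C_S(A)=A$ under such a replacement), we may assume $A$ is fully normalized in $\calf$; the Sylow axiom then gives $\Aut_S(A)\in\sylp{\Gamma}$, and we identify $T=\Aut_S(A)$. Proposition \ref{p:not.str.cl.3a} then delivers $\RR[+]{T}{A}\ne\emptyset$, which is the input required for the machinery of Section \ref{s:A<|F}.

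For the first equality $|C_A(T)\cap[T,A]|=p$, I would unwind the proof of Corollary \ref{c:not.str.cl.}: pick $(\tau_0,B_0,A_{*0})\in\RR[+]{T}{A}$ maximizing $|C_A(\tau_0)\cap[\tau_0,A]|$. Inequality \eqref{e:B2} gives $|B_0|\ge|C_A(\tau_0)\cap[\tau_0,A]|$. The hypothesis $|[T,A]|>p$ in particular forces $[\tau_0,A]=[T,A]\ne1$, and since $\tau_0$ is a nontrivial $p$-element acting on the nontrivial $p$-group $[\tau_0,A]$, the fixed subgroup $C_A(\tau_0)\cap[\tau_0,A]$ is nontrivial, so $|B_0|\ge p$. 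Because $|T|=p$, this forces $B_0=T$, and then Corollary \ref{c:not.str.cl.} yields $|C_A(\tau)\cap[\tau,A]|\le p$ for every $\tau\in T^\#$. The reverse inequality is the same fixed-point observation applied to any generator of $T$, so combining the two bounds gives the claimed equality.

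The hard part will be the second equality $|N_\Gamma(T)/C_\Gamma(T)|=p-1$. Since this quotient embeds canonically into $\Aut(T)\cong C_{p-1}$, the real content is surjectivity, which does not follow from the criteria of Section \ref{s:A<|F} in a purely formal way. My plan is to invoke the detailed structural analysis, carried out in \cite{indp1} and \cite{indp2}, of saturated fusion systems over a finite $p$-group $S$ in which an abelian subgroup of index $p$ is not normal. The governing mechanism is that, by Alperin's fusion theorem (Theorem \ref{t:AFT}), the non-normality of $A$ combined with $|\Aut_S(A)|=p$ forces the existence of essential subgroups of $S$ whose automizers, via the extension and saturation axioms, must collectively realize every unit of $\F_p^\times$ acting on $T$. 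Reproducing that analysis in full is beyond the scope of this sketch, so I would cite the two indicated papers for the second equality.
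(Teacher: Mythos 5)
Your overall approach matches the paper's: use Proposition \ref{p:not.str.cl.3a} to obtain $\RR[+]{T}{A}\ne\emptyset$, derive the first equality from Corollary \ref{c:not.str.cl.} together with the elementary fact that a nontrivial $p$-element acting on a nontrivial $p$-group has nontrivial fixed points, and defer the second equality to the structural analysis of \cite{indp1,indp2}. The paper in fact dispatches the first equality with a single sentence citing Corollary \ref{c:not.str.cl.}; your unwinding is correct and fills in what the paper leaves implicit (that $B_0\ne1$ is forced, hence $B_0=T$, and that the lower bound $|C_A(T)\cap[T,A]|\ge p$ comes from the $p$-group fixed-point theorem).

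For the second equality there is a gap. The results in \cite{indp1,indp2} concern saturated fusion systems over a $p$-group $S$ containing an abelian subgroup of index $p$, and before invoking them you must verify that $A$ is such a subgroup, i.e.\ that $A\nsg S$ and $|S/A|=p$. This is precisely where the hypothesis $|[T,A]|>p$ is used. Since $|\Aut_S(A)|=|T|=p$, we have $|N_S(A)/A|=p$; and since $|A/C_A(T)|=|[T,A]|>p$, the subgroup $A$ is the \emph{unique} abelian subgroup of index $p$ in $N_S(A)$. Hence $A\nsg S$, for otherwise conjugating by an element of $N_S(N_S(A))\sminus N_S(A)$ would produce a second such subgroup. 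Only then is one in the setting of \cite{indp2}, and the paper cites \cite[Lemmas 2.2(a), 2.6(a)]{indp2} to identify the possible essential subgroups and conclude $\Aut_\Gamma(T)=\Aut(T)$, giving $|N_\Gamma(T)/C_\Gamma(T)|=p-1$. Your sketch points at the right references but skips the step of proving $A\nsg S$; note also that you invoked $|[T,A]|>p$ only to observe $[\tau_0,A]\ne1$, which already follows from faithfulness of $T\le\Aut(A)$ and needs no hypothesis — the real role of $|[T,A]|>p$ is the normality argument just described.
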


\begin{proof} The first equality is just a special case of Corollary 
\ref{c:not.str.cl.}. 

To see the second equality, assume that $(\Gamma,A)$ is realized by the 
fusion system $\calf$ over $S\ge A$. In particular, we can assume that 
$\Aut_S(A)=T$, and so $|N_S(A)/A|=|T|=p$. Also, $|A/C_A(T)|=|[T,A]|>p$ by 
assumption, so $A$ is the only abelian subgroup of index $p$ in $N_S(A)$. 
Hence $A\nsg S$, since otherwise $A\ne\9xA\le N_S(A)$ for $x\in 
N_S(N_S(A))\sminus N_S(A)$. 

By Theorem \ref{t:AFT} and since $A\nnsg\calf$ (recall $\calf$ realizes 
$(\Gamma,A)$), there must be some 
$\calf$-essential subgroup $P\le S$ other than $A$, and by \cite[Lemma 
2.2(a)]{indp2}, $P\in\calh\cup\calb$ where the classes $\calh$ and $\calb$ 
of subgroups of $S$ are defined in \cite[Notation 2.1]{indp2}. 
By \cite[Lemma 2.6(a)]{indp2} (and in terms of Notation 2.4 in \cite{indp2}), 
we have $\mu(\autf^{(P)}(S))=\Delta_t$ for $t=0$ or $-1$, and from the 
definition of $\mu$ it then follows that $\Aut_\Gamma(T)=\Aut(T)$ and hence 
has order $p-1$. 
\end{proof}

\section{Representations of Mathieu groups} 
\label{s:Mathieu}

We next look at representations of the Mathieu groups $M_n$ and their 
central extensions. The main theorem is stated for an arbitrary prime $p$, 
but we focus attention mostly on the cases $p=2,3$, since the others 
follow from Lemma \ref{l:indp} and results in \cite{indp2}.

We will apply Corollary \ref{c:not.str.cl.2} in most cases, using Lemma 
\ref{l:J(x)ge...} and the character tables in \cite{modatlas} to find lower 
bounds for $\scrj_A(x)$ when $|x|=2$ or $3$. The notation $\2x$ and $\3x$ 
refers to the classes as named in the Atlas \cite{atlas} and in 
\cite{modatlas}. In the following lemma, we restrict attention to $M_{12}$ 
and $M_{24}$ since they are the only Mathieu groups with more than one 
conjugacy class of elements of order $2$ or $3$.

\begin{Lem} \label{l:M12,M24}
Assume $\Gamma\cong M_{12}$ or $M_{24}$. Then 
\begin{enuma} 

\item each element of order $2$ in $\Gamma$ is contained in some 
$H_1\le\Gamma$ with $H_1\cong D_{10}$; and

\item each element of order $3$ in $\Gamma$ is contained in some 
$H_3\le\Gamma$ with $H_3\cong A_4$, and with elements of order $2$ in class 
\2a (if $\Gamma\cong M_{12}$) or \2b (if $\Gamma\cong M_{24}$). 

\end{enuma}
\end{Lem}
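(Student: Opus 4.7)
The plan is to verify the lemma by direct inspection of the subgroup structure and conjugacy class data of $M_{12}$ and $M_{24}$ as recorded in the Atlas \cite{atlas}, using the natural permutation representation on $12$, resp.\ $24$, points to identify classes via cycle type.

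For part (a), my starting point would be that in both $M_{12}$ and $M_{24}$ the Sylow $5$-normalizer is a Frobenius group $F_{20} = 5{:}4$, whose unique index-$2$ subgroup is a $D_{10}$. This already gives one $\Gamma$-conjugacy class of $D_{10}$ subgroups; a cycle-type calculation shows that the involutions in this $D_{10}$ act by inverting each $5$-cycle of the order-$5$ element while fixing its remaining points, hence have cycle type $2^4 1^4$ in $M_{12}$ and $2^8 1^8$ in $M_{24}$, i.e., class \2a. To cover the fixed-point-free class \2b, I would exhibit a second family of $D_{10}$'s: in $M_{12}$ one takes a $D_{10}$ inside the maximal subgroup $L_2(11)$ in which the involution swaps the two $5$-cycles (and the two fixed points) of an order-$5$ element, producing cycle type $2^6$; for $M_{24}$ the analogous argument uses $L_2(23) \le M_{24}$ and yields type $2^{12}$. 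Since all Sylow $5$-subgroups are $\Gamma$-conjugate, every involution of $\Gamma$ is inverted by some element of order $5$, and hence lies in a $D_{10}$.

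For part (b), finding an $A_4$-subgroup of $\Gamma$ containing a chosen element $g$ of order $3$ is equivalent to finding a Klein four-subgroup $V \le \Gamma$ normalized but not centralized by $\langle g\rangle$. I would carry this out for each of the two $\Gamma$-classes of elements of order $3$ (classes \3a and \3b), and in each case verify by cycle-type computation that $V$ is generated by involutions of the class claimed. For $M_{12}$, both $3$-classes can be placed inside an $A_4$ whose Klein four-group consists of involutions with fixed points (class \2a), using $A_4 \le M_{11}$ and $A_4 \le 3^2{:}2S_4$ respectively. For $M_{24}$, both $3$-classes can be placed inside an $A_4$ whose Klein four-group consists of fixed-point-free involutions (class \2b), realized inside the trio-stabilizer $2^6{:}3.S_6$ and the sextet-stabilizer $2^6{:}3.S_6$ (or inside the maximal subgroup $M_{22}{:}2$), where the Klein group lies in the natural $2^6$ normal subgroup.

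The main obstacle is not conceptual but bookkeeping: unambiguously identifying the Atlas class of each involution produced. This reduces to a direct computation of cycle structures in the natural permutation representation for explicit choices of generators, which is routine given the Atlas data and can be checked mechanically.
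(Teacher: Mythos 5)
You take a genuinely different route from the paper. You propose to verify the claims directly in both $M_{12}$ and $M_{24}$ by exhibiting $D_{10}$'s and $A_4$'s inside known maximal subgroups and reading off classes from cycle types. The paper instead reduces to $M_{12}$ alone: since the outer automorphism of $M_{12}$ fixes each relevant class, the inclusion $\Aut(M_{12}) \hookrightarrow M_{24}$ sends distinct classes to distinct classes, so it suffices to argue in $M_{12}$. The paper then works inside the centralizers $C_\Gamma(\2a) \cong 2 \times \Sigma_5$ and $C_\Gamma(\3b) \cong 3 \times A_4$ (taken from \cite{GL}), which makes the class identifications essentially automatic and avoids a case split between the two groups.

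There is, however, a genuine gap in your proof of (b) for $M_{12}$. You have the Atlas class labels backwards: in the degree-$12$ action, $\2a$ is the fixed-point-free class (cycle type $2^6$), and $\2b$ is the one with four fixed points ($2^4 1^4$) --- see Table \ref{tbl:2A2B}. The Lemma asks for an $A_4 \le M_{12}$ whose Klein four-group consists of $\2a$ involutions, i.e.\ fixed-point-free ones. You propose $A_4 \le M_{11}$, but $M_{11}$ is a point stabilizer, so every involution in $M_{11}$ fixes a point and therefore lies in $\2b$. Thus the Klein four-group of any such $A_4$ is in the wrong class, and no relabeling fixes this: the required $A_4$ simply cannot lie inside $M_{11}$. (The same mix-up appears in your part (a), where you call $2^4 1^4$ ``class $\2a$'' and $2^6$ ``class $\2b$'', but it is harmless there since (a) makes no assertion about which class is which.) The paper's argument gets the class right by observing that $N = O_2(C_\Gamma(g)) \cong E_4$ for $g \in \3b$ acts freely on the four $\gen{g}$-orbits, so each $1 \ne n \in N$ acts freely on all $12$ points and hence lies in $\2a$. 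Your $M_{24}$ sketch has an analogous unfinished step: not every involution in the $2^6$ of the sextet group is fixed-point-free, so one would still have to check that the chosen Klein four consists only of $\2b$ elements.
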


\begin{proof} Let $n=12,24$ be such that $\Gamma\cong M_n$, and let $X$ be 
a $5$-fold transitive $\Gamma$-set of order $n$. In each case, $\Gamma$ has 
two classes of elements of order $2$ and two classes of elements of order 
$3$, and they are distinguished by whether they act on $X$ freely or with 
fixed points as described in Table \ref{tbl:2A2B}. The outer automorphism 
of $M_{12}$ sends each of these classes to itself, and so the inclusion of 
$\Aut(M_{12})$ into $M_{24}$ sends distinct classes to distinct classes. It 
thus suffices to prove the lemma when $\Gamma\cong M_{12}$. 
	\begin{Table}[ht]
	\[ \renewcommand{\arraystretch}{1.3} 
	\renewcommand{\arraycolsep}{4mm}
	\begin{array}{c|cccc} 
	\Gamma & \2a & \2b & \3a & \3b \\\hline
	M_{12} & 2^6 & 2^4\cdot1^4 & 3^3\cdot1^3 & 3^4 \\
	M_{24} & 2^8\cdot1^8 & 2^{12} & 3^6\cdot1^6 & 3^8 
	\end{array} \]
	\caption{The table lists the number of orbits in the action on $X$ 
	by each element of order $2$ or $3$ in $\Gamma$. Thus, for example, 
	a $\2b$-element in $M_{12}$ acts with four orbits of 
	length two and four fixed points.} 
	\label{tbl:2A2B}
	\end{Table}

\noindent\textbf{(a) } Fix an element $g\in\2a$. By \cite[p. 41]{GL}, 
$C_\Gamma(g)\cong C_2\times\Sigma_5$, and the second factor must permute 
faithfully the six orbits under the action of $g$. Fix $N\le C_\Gamma(g)$ 
of order $5$, and let $h\in C_\Gamma(g)\sminus\gen{g}$ be such that 
$N\gen{h}\cong D_{10}$. Then $N\gen{gh}\cong D_{10}$, and we will be done 
upon showing that $h$ and $gh$ lie in different classes. 

Set $X_0=C_X(N)$, a subset of order $2$ whose elements are exchanged by 
$g$, and set $X_1=X\sminus X_0$. Of the two elements $h$ and $gh$, one 
fixes the two points in $X_0$ and the other exchanges them, and we can 
assume that $h$ fixes them. Hence $C_X(h)\ne\emptyset$, so $h\in\2b$. Also, 
$C_X(gh)\subseteq X_1$, and since $gh$ permutes freely 
four of the five $\gen{g}$-orbits in $X_1$, we have $|C_X(gh)|\le2$. Since 
no involution in $M_{12}$ acts with exactly two fixed points, this shows 
that $gh\in\2a$, finishing the proof of (a).

\noindent\textbf{(b) } Now fix an element $g\in\3b$. Then 
$C_\Gamma(g)\cong C_3\times A_4$ by \cite[p. 41]{GL}. Set 
$N=O_2(C_\Gamma(g))\cong E_4$. The group 
$C_\Gamma(g)/\gen{g}\cong A_4$ acts faithfully on the set of four orbits of 
$g$, so the elements of order $2$ in $N$ all act freely on $X$ 
and hence lie in class \2a. 

Fix $h\in C_\Gamma(g)$ such that $N\gen{h}\cong A_4$. Then $N\gen{gh}$ and 
$N\gen{g^2h}$ are also isomorphic to $A_4$. Also $h$ permutes freely three 
of the four $\gen{g}$-orbits in $X$, and the fourth orbit is fixed by 
exactly one of the elements $h$, $gh$, or $g^2h$. So one of these three 
elements lies in class $\3a$,and the other two in class $\3b$. 
\end{proof}

There are two special cases that we will need to consider separately. The 
statement and proof of the following proposition are based on notation 
set up in Appendices \ref{s:Todd-F2} and \ref{s:3M22}.

\begin{Prop} \label{p:M22-23}
Assume $p=2$.
\begin{enuma} 

\item If $\Gamma\cong M_{22}$ or $M_{23}$ and $A$ is the Golay module (dual 
Todd module) for $\Gamma$, then $\RR[+]{T}{A}=\emptyset$ for 
$T\in\syl2\Gamma$.

\item If $\Gamma\cong3M_{22}$ and $A$ is the $6$-dimensional simple 
$\F_4\Gamma$-module, then $\RR[+]{T}{A}=\emptyset$ for $T\in\syl2\Gamma$. 

\end{enuma}
\end{Prop}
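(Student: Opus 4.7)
The plan is to apply Proposition \ref{p:not.str.cl.3b} together with the
recursive condition ($*$) in Definition \ref{d:K&R}, as distilled in
Corollary \ref{c:not.str.cl.}. Since $p=2$ and $A$ is elementary abelian,
we have $[\tau,A]\le C_A(\tau)$ for every involution $\tau\in T$, so
Proposition \ref{p:not.str.cl.3b} yields
\[
|B|\;\ge\;|C_A(\tau)\cap[\tau,A]|\;=\;|[\tau,A]|
\]
for every triple $(\tau,B,A_*)\in\RR[+]{T}{A}$. Moreover, the condition
that $B$ be isomorphic to a subgroup of $A$ forces $B$ to be elementary
abelian. Thus if $\RR[+]{T}{A}\ne\emptyset$, Corollary \ref{c:not.str.cl.}
supplies an elementary abelian subgroup $B_0\le T$ with
$|B_0|\ge|[\tau,A]|$ for every $\tau\in B_0^\#$; in particular, if $r$
denotes the $2$-rank of $T$, then $|[\tau,A]|\le 2^r$ for some nontrivial
involution $\tau\in T$. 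The strategy is therefore to show, in each case,
that $|[\tau,A]|>2^r$ for every involution $\tau\in T$.

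\textbf{Part (a).} Since $|M_{22}|_2=|M_{23}|_2=2^7$ and $M_{22}\le
M_{23}$, the Sylow $2$-subgroups of $M_{22}$ and $M_{23}$ agree and both
have $2$-rank $r=4$. Each of $M_{22}$ and $M_{23}$ has a single conjugacy
class of involutions, so all involutions $\tau\in T$ have the same Jordan
structure on $A$, and one computation per case suffices. Using the
explicit realization of the Golay modules for $M_{22}$ and $M_{23}$ set up
in Appendix \ref{s:Todd-F2}, I would compute $\dim_{\F_2}[\tau,A]$ for an
involution $\tau$ and verify that $|[\tau,A]|>2^r=16$, producing the
required contradiction.

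\textbf{Part (b).} Here $\Gamma=3M_{22}$ acts on the $6$-dimensional
simple $\F_4\Gamma$-module $A$, an elementary abelian $2$-group of rank
$12$. The central subgroup $Z(\Gamma)\cong C_3$ acts as scalar
multiplication by a primitive cube root of unity in $\F_4$, hence
fixed-point-freely on $A$, so $T\in\syl2\Gamma$ projects isomorphically
onto a Sylow $2$-subgroup of $M_{22}$ and again has $2$-rank $r=4$. Every
involution $\tau\in T$ is $\F_4$-linear (it commutes with the central
$C_3$), so its $\F_2$-Jordan structure is determined by its $\F_4$-Jordan
structure: each $\F_4$-Jordan block of size $k$ contributes two
$\F_2$-Jordan blocks of size $k$. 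Using the presentation of $A$ recorded
in Appendix \ref{s:3M22}, I would read off the $\F_4$-Jordan type of a
lift of an involution of $M_{22}$ and verify that $|[\tau,A]|>16$.

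The main technical obstacle is the explicit determination of the Jordan
structure of an involution on each of these modules. This is precisely
what Appendices \ref{s:Todd-F2} and \ref{s:3M22} are designed to make
tractable by fixing convenient coordinates. Once $\dim_{\F_2}[\tau,A]$ is
in hand, the comparison with the $2$-rank $r=4$ is immediate, and the
contradiction with the conclusion of the first paragraph gives
$\RR[+]{T}{A}=\emptyset$ in both parts.
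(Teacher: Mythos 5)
Your opening setup via Proposition \ref{p:not.str.cl.3b} and Corollary \ref{c:not.str.cl.} is the right starting point, but the strategy you announce — show $|[\tau,A]|>2^r=16$ for every involution $\tau$ — cannot succeed, because the inequality is actually an equality. For the Golay module of $M_{22}$ or $M_{23}$, Lemma \ref{l:hexad.grp} gives $[\trsh1,A]=\gen{C_{12},C_{13},C_{14},\grfh1}\cong E_{16}$, so $|[\tau,A]|=2^4$ for every involution (there is a single class). Similarly, for the $6$-dimensional $\F_4\,3M_{22}$-module, Lemma \ref{l:3M22b}(b) gives $[\mu_{10},A]=\gen{e_2,e_3}$, which has $\F_2$-rank $4$, so again $|[\tau,A]|=2^4$. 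Since $\rk_2(\Gamma)=4$ in all three cases, you only get $|[\tau,A]|\le2^r$ with equality — no contradiction.

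The extra idea needed is to exploit that equality. Once $|B|=|[\tau,A]|=2^4$, the chain of inequalities in \eqref{e:B1+} collapses, forcing $C_A(\tau)=A_*[\tau,A]$, and since $A_*\le C_A(B)$ this bounds $\rk(C_A(B))$ from below by $\rk(A)-2\cdot\rk([\tau,A])$. There are exactly two $E_{16}$-subgroups of $T$ (namely $H_1,H_2$, resp. $P_1,P_2$); the centralizer of one of them is too small, so $B$ is pinned down uniquely. Finally, condition ($*$) in Definition \ref{d:K&R} lets one choose a specific $\tau\in B^\#$ (namely $\trsh1$, resp. $\mu_{10}$), and then the inclusion $C_A(\tau)\le C_A(B)[\tau,A]$ fails by an explicit element (Tables \ref{tbl:[x,a]}, \ref{tbl:[ch1,x]}, resp. Lemma \ref{l:3M22b}(b)). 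Without this finer analysis of the equality case, your proof has a genuine gap and does not close.
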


\begin{proof} In the first part of the proof, we consider cases (a) and (b) 
together. Assume the proposition is not true, and fix a triple 
$(\tau,B,A_*)\in\RR[+]{T}{A}$. Thus $\tau\in T$ has order $2$, $B\le T$ is 
an elementary abelian $2$-subgroup, and $A_*\le C_A(\gen{B,\tau})$ is such 
that $|B|\ge|C_{A/A_*}(\tau)|$. By Proposition \ref{p:not.str.cl.3b} and 
since $[\tau,A]\le C_A(\tau)$, we have 
	\beqq |B| \ge \bigl|[\tau,A]\bigr| \cdot \bigl|C_A(\tau) / 
	A_*[\tau,A] \bigr| \ge |[\tau,A]|. \label{e:R-ineq} \eeqq

Since $\Gamma\cong M_{22}$, $M_{23}$, or $3M_{22}$ has only one conjugacy 
class of involution, we have $|[\tau,A]|=2^4$: by Lemma \ref{l:hexad.grp} 
in case (a), and by Lemma \ref{l:3M22b}(b) in case (b). Thus $|B|\ge2^4$, with 
equality since $\rk_2(\Gamma)=4$ in all cases. So the inequalities in 
\eqref{e:R-ineq} are equalities, $C_A(\tau) = A_*[\tau,A]$, and hence 
	\beqq \rk(C_A(B))\ge \rk(A_*)\ge\rk(C_A(\tau)/[\tau,A]) 
	= \rk(A)-2\cdot\rk([\tau,A]) = \rk(A)-8. \label{e:rkA-8} \eeqq

\smallskip

\noindent\textbf{(a) } Assume $T<\Gamma$ and $A$ are as in Notation 
\ref{not:Gamma-action} and \ref{n:M22-23}. Since $H_1$ and $H_2$ are the only 
subgroups of $T\in\syl2{\Gamma_0}$ isomorphic to $E_{16}$ by Lemma 
\ref{l:hexad.grp}, $B$ must be equal to one of them. 
Since $C_A(H_1)$ has rank $1$ by Lemma \ref{l:hexad.grp} again, and 
$\rk(C_A(B))\ge\rk(A)-8\ge2$ by \eqref{e:rkA-8}, we have $B=H_2$. 


By condition ($*$) in Definition \ref{d:K&R}, each element of 
$B^\#$ can appear as the first component in an element of 
$\RR[+]{T}{A}$. So we can assume that $(\tau,B,A_*)$ was chosen such that 
$\tau=\trsh1$ (and still $B=H_2$). Hence by Tables \ref{tbl:[x,a]} and 
\ref{tbl:[ch1,x]},
	\[ \grfh2+C_{56}\in C_A(\trsh1) = A_*[\trsh1,A]
	\le C_A(H_2)[\trsh1,A] = \gen{C_{12},C_{13},C_{14},C_{15},\grfh1}, \] 
a contradiction. We conclude that $\RR[+]{T}{A}=\emptyset$. 

\smallskip

\noindent\textbf{(b) } Now assume $T<\Gamma$ and $A$ are as in Notation 
\ref{n:3M22} and \ref{n:3M22b}. By Lemma 
\ref{l:3M22b}(a), $P_1$ and $P_2$ are the only subgroups of $T$ isomorphic 
to $E_{16}$. Since $\rk(C_A(P_2))=2\cdot\dim_{\F_4}(C_A(P_2))=2$ by Lemma 
\ref{l:3M22b}(b), while $\rk(C_A(B))\ge4$ by \eqref{e:rkA-8}, we have 
$B=P_1$. By condition ($*$) in Definition \ref{d:K&R}, we can assume that 
the triple $(\tau,P_1,A_*)$ was chosen so that $\tau=\mu_{10}$. But then
	\[ \gen{e_1,e_2,e_3,e_4} = C_A(\mu_{10}) = 
	A_*[\mu_{10},A]\le C_A(P_1)[\mu_{10},A] 
	= \gen{e_1,e_2,e_3} \]
by Lemma \ref{l:3M22b}(b), a contradiction. 
\end{proof}

We now apply Corollary \ref{c:not.str.cl.2} and Lemma \ref{l:J(x)ge...}, 
together with Proposition \ref{p:M22-23}, to determine the realizability of 
$\F_p\Gamma$-modules when $O^{p'}(\Gamma)$ is a central extension of a 
Mathieu group. The following is a restatement of Theorem \ref{ThA}. 

\begin{Thm} \label{t:M11-24}
Fix a prime $p$ and a finite group $\Gamma$, and set 
$\Gamma_0=O^{p'}(\Gamma)$. Assume that $\Gamma_0$ is quasisimple, 
and that $\Gamma_0/Z(\Gamma_0)$ is one of the Mathieu groups. Let $A$ be an 
$\F_p\Gamma$-module such that $(\Gamma,A)$ is fusion realizable, and 
set $A_0=[\Gamma_0,A]/C_{[\Gamma_0,A]}(\Gamma_0)$. Then either 
\begin{enuma} 

\item $p=2$, $\Gamma\cong M_{22}$ or $M_{23}$, and $A_0$ is the Todd 
module for $\Gamma$; or 

\item $p=2$, $\Gamma\cong M_{24}$, and $A_0$ is the Todd 
module or Golay module for $\Gamma$; or 

\item $p=3$, $\Gamma\cong M_{11}$, $M_{11}\times C_2$, or $2M_{12}$, and 
$A_0$ is the Todd module or Golay module for $\Gamma_0$; or

\item $p=11$, $\Gamma_0\cong2M_{12}$ or $2M_{22}$, 
$\Gamma/Z(\Gamma_0)\cong\Aut(M_{12})\times C_5$ or $\Aut(M_{22})\times C_5$, 
and $A_0$ is a $10$-dimensional simple $\F_{11}\Gamma$-module. 

\end{enuma}
\end{Thm}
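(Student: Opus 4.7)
The plan is to split by the prime $p$. For $p\ge5$, every Sylow $p$-subgroup of a quasisimple Mathieu cover $\Gamma_0$ is cyclic of order $p$, so Lemma \ref{l:indp} applies: if $(\Gamma,A)$ is fusion realizable with $|[T,A]|>p$ for $T\in\sylp{\Gamma}$, then $|C_A(T)\cap[T,A]|=p$ and $|N_\Gamma(T)/C_\Gamma(T)|=p-1$. The second condition, combined with the structure of normalizers in Mathieu groups and the irreducible $\F_p\Gamma_0$-modules listed in \cite{modatlas}, leaves precisely the configurations of conclusion (d) at $p=11$, as extracted in \cite{indp2}. The residual cases where $|[T,A]|\le p$ correspond to modules on which $\Gamma_0$ acts essentially trivially and contribute nothing to the list.

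For $p\in\{2,3\}$, the plan is a case analysis over the quasisimple covers $\Gamma_0$ and their simple $\F_p\Gamma_0$-modules (tabulated in \cite{modatlas}). I would first reduce to the case where $A_0$ itself is simple, since Corollary \ref{c:not.str.cl.2} applies to every nontrivial composition factor. For each candidate simple $A_0$, choose an elementary abelian $B\le\Gamma$ of minimal rank and demand $\scrj_{A_0}(\tau)\le\rk(B)$ for every $\tau\in B^\#$. When $\Gamma_0\cong M_{12}$ or $M_{24}$, Lemma \ref{l:M12,M24} produces a $D_{10}\le\Gamma$ (for $p=2$) or an $A_4\le\Gamma$ (for $p=3$), pinning down the conjugacy class of the relevant $p$-elements in $B$; the Jordan-block count $\scrj_{A_0}(\tau)$ is then computed from character data via Lemma \ref{l:J(x)ge...}. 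For nearly every simple module on the list, the resulting numerical inequality fails and Corollary \ref{c:not.str.cl.2} excludes $(\Gamma,A_0)$.

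The main obstacle is the two families of modules for which Corollary \ref{c:not.str.cl.2} is not sharp enough to separate the realizable case from an unrealizable dual. The Golay modules for $M_{22}$ and $M_{23}$ at $p=2$ share dimension and involution-Jordan structure with the Todd modules, so their exclusion requires the finer set $\RR[+]{T}{A}$ of Definition \ref{d:K&R} together with the hexad combinatorics of Appendix \ref{s:Todd-F2}; this is accomplished in Proposition \ref{p:M22-23}(a). A parallel issue for $\Gamma_0\cong 3M_{22}$ and the $6$-dimensional simple $\F_4\Gamma_0$-module is handled by Proposition \ref{p:M22-23}(b) using Appendix \ref{s:3M22}. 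Once these exceptions are disposed of, the surviving modules are exactly those listed in conclusions (a)--(d), all of which are realizable: the Todd and Golay modules for $M_{22}$, $M_{23}$, $M_{24}$ at $p=2$ are realized by the fusion systems of $\Fi_{22}$, $\Fi_{23}$, $\Co_2$ (as recalled in the discussion preceding Theorem \ref{ThA}), the $p=3$ cases are realized in \cite{O-todd}, and the $p=11$ cases come from groups identified in \cite{indp2}.
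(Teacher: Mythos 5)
Your overall strategy matches the paper's (split by prime, use Lemma \ref{l:indp} plus \cite{indp2} for $p\ge5$, use Jordan-block lower bounds via Corollary \ref{c:not.str.cl.2} and Lemma \ref{l:J(x)ge...} for $p=2,3$, and invoke Proposition \ref{p:M22-23} for the Golay/$3M_{22}$ exceptions). But the proposal has two genuine gaps.

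First, you never treat the cases at $p=2$ where $Z(\Gamma_0)$ has even order, i.e., $\Gamma_0\cong 2M_{12}$ or one of $2M_{22}$, $4M_{22}$, $(2\times 3)M_{22}$, $4\cdot 3M_{22}$. Here the center acts nontrivially on a faithful $\F_2\Gamma$-module $A$, so $A\ne A_0$, the Jordan-block bounds on $A_0$ alone do not immediately rule anything out, and the reduction ``to the case where $A_0$ is simple'' fails as stated: Corollary \ref{c:not.str.cl.2} is about $A$, and passing to a $T$-subquotient requires Proposition \ref{p:not.str.cl.3c} plus a subtle comparison of Jordan structure between $A$ and $A_0$. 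The paper handles $2M_{12}$ by showing (Lemma \ref{l:pG-rep}) that any faithful $\F_2\Gamma$-module $A$ with $A_0$ a nontrivial simple $\F_2 M_{12}$-module has $\scrj_A(\tau)>\scrj_{A_0}(\tau)$ for some involution $\tau$ in each rank-$4$ elementary abelian $B\le\Gamma$, pushing past the $\rk_2(\Gamma)=4$ threshold; and it rules out the $M_{22}$-covers with nontrivial $2$-part by combining Lemma \ref{l:pG-rep}(b) with the vanishing of both $H^1(\Gamma/Z;A_0)$ and $H^1(\Gamma/Z;A_0^*)$ (\cite[Lemma 6.1]{MS}), showing no faithful indecomposable extension exists at all. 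Without these arguments you cannot conclude that the list (a)--(d) is complete.

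Second, you pin down $\Gamma_0$ and $A_0$ but never constrain $\Gamma$ itself, while conclusions (a)--(c) assert specific isomorphism types for $\Gamma$ (e.g.\ $\Gamma\cong M_{22}$, not merely $\Gamma_0\cong M_{22}$; for $p=3$, $\Gamma\cong M_{11}$, $M_{11}\times C_2$, or $2M_{12}$). The paper closes this via an absolute irreducibility argument for the Todd modules of $M_{11}$ and $2M_{12}$ (citing \cite[Lemmas 4.2 and 5.2]{O-todd}), which bounds $\Gamma$ inside the normalizer of $\Gamma_0$ in $\GL(A_0)$; you omit this step. There is also a small omission at $p=3$: the $10$-dimensional permutation quotient for $M_{11}$ has $\scrj_{A_0}(\tau)=3>\rk_3(\Gamma_0)=2$ by a direct orbit count, not by the character-theoretic bound, and your proposal does not distinguish it from the Todd module of the same dimension. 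Finally, your remark that ``the Todd and Golay modules for $M_{22}$, $M_{23}$ \dots are realized'' is not what the paper says: the Golay modules for $M_{22}$, $M_{23}$ are \emph{not} realizable (that is the content of Proposition \ref{p:M22-23}(a)), and the $\Co_2$ realization concerns the Golay module for $\Aut(M_{22})$, a group excluded from the theorem's hypotheses.
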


\begin{proof} Let $n\in\{11,12,22,23,24\}$ be such that 
$\Gamma_0/Z(\Gamma_0)\cong M_n$. Fix $T\in\sylp{\Gamma}=\sylp{\Gamma_0}$. 
We will frequently refer to Tables \ref{tbl:JA(t)-2} and \ref{tbl:JA(t)-3} 
for our lower bounds on $\scrj_A(\tau)$ for $|\tau|=p$, and they in turn 
are based on Lemmas \ref{l:M12,M24} and \ref{l:J(x)ge...} and the character 
tables in the Atlas of Brauer characters \cite{modatlas}.

\smallskip

\noindent\textbf{Case 1: } If $p>3$, then $|T|=p$ in all cases. 
So by Lemma \ref{l:indp}, we have $|N_\Gamma(T)/C_\Gamma(T)|=p-1$ and 
$|C_A(T)\cap[T,A]|=p$. In the terminology of \cite{indp2}, this translates 
to saying that $\Gamma\in\scrg_p^\wedge$ and $A$ is minimally active, and 
so the result follows from \cite[Proposition 7.1]{indp2}.

\smallskip

\noindent\textbf{Case 2: } Assume $p=2$. By Table \ref{tbl:JA(t)-2}, for 
$\tau\in\Gamma$ of order $2$, we have $\scrj_{A_0}(\tau)>\rk_2(\Gamma)$ (and 
hence $\RR[+]{T}{A_0}=\emptyset$) for each nontrivial simple 
$\F_2\Gamma_0$-module $A_0$, except when $\Gamma_0\cong M_{22}$, $M_{23}$, 
or $M_{24}$ and $A_0$ is the Todd module or Golay module. 
	\begin{Table}[ht]
	\[ \renewcommand{\arraystretch}{1.2} 
	\begin{array}{cccc|c} 
	\Gamma_0 & \rk_2(\Gamma_0) & \dim(A_0) & \tau\in & \scrj_{A_0}(\tau) 
	\\\hline
	M_{11} & 2 & >1 & \2a & 
	\scrj_{A_0}(\2a)\ge\frac25(\chi_{A_0}(1)-\chi_{A_0}(\0{5a}))\ge4 \\
	M_{12} & 3 & >1 & \2a,\2b & 
	\scrj_{A_0}(\2x)\ge\frac25(\chi_{A_0}(1)-\chi_{A_0}(\0{5a}))\ge4 \\
	M_{22} & 4 & >10 & \2a & 
	\scrj_{A_0}(\2a)\ge\frac25(\chi_{A_0}(1)-\chi_{A_0}(\0{5a}))\ge8 \\
	3M_{22} & 4 & >12 & \2a & 
	\scrj_{A_0}(\2a)\ge\frac25(\chi_{A_0}(1)-\chi_{A_0}(\0{5a}))\ge6 \\
	M_{23} & 4 & >11 & \2a & 
	\scrj_{A_0}(\2a)\ge\frac25(\chi_{A_0}(1)-\chi_{A_0}(\0{5a}))\ge8 \\
	M_{24} & 6 & >11 & \2a,\2b & 
	\scrj_{A_0}(\2x)\ge\frac25(\chi_{A_0}(1)-\chi_{A_0}(\0{5a}))\ge8 
	\end{array} \]
	\caption{In all cases, $A_0$ is an $\F_2\Gamma$-module such that 
	$C_{A_0}(\Gamma)=0$ and $[\Gamma,A_0]=A_0$, and the characters are 
	taken with respect to $\F_2$. The bounds for $\scrj_{A_0}(\tau)$ 
	all follow from Lemmas \ref{l:M12,M24}(a) and 
	\ref{l:J(x)ge...}(a).} 
	\label{tbl:JA(t)-2}
	\end{Table}

Thus if $Z(\Gamma_0)$ has odd order, then either $n\ge22$ and $A_0$ is the 
Todd module or Golay module for $\Gamma$, or $\Gamma_0\cong3M_{22}$ and 
$A_0$ is the $6$-dimensional $\F_4\Gamma_0$-module. In these cases, 
$\RR[+]{T}{A_0}=\emptyset$ by Proposition \ref{p:M22-23}, and so they are 
impossible by Propositions \ref{p:not.str.cl.3a} and \ref{p:not.str.cl.3c}. 

It remains to consider the cases where $Z(\Gamma)$ has even order. Assume 
first that $\Gamma_0\cong2M_{12}$. Then $\rk_2(\Gamma)=4$, and 
$\scrj_{A_0}(\tau)\ge4$ for each $\F_2[\Gamma/Z(\Gamma)]$-module $A_0$ with 
nontrivial action by Table \ref{tbl:JA(t)-2}. By the last statement in 
Lemma \ref{l:pG-rep} (applied with $A$ in the role of $V$), for each 
elementary abelian 2-subgroup $B\le G$ of rank $4$, since $Z(\Gamma)\le B$, 
there is $\tau\in B$ of order $2$ such that $\scrj_{A}(\tau)\ge5$. So 
Corollary \ref{c:not.str.cl.2} again applies to show that $(\Gamma,A)$ is 
not fusion realizable.

Now assume that $\Gamma_0/Z(\Gamma_0)\cong M_{22}$, and let $Z\le 
Z(\Gamma)$ be the Sylow 2-subgroup. Thus $|Z|=2$ or $4$, and 
$\rk_2(\Gamma_0)\le5$.  By Table \ref{tbl:JA(t)-2} and since 
$\scrj_{A_0}(\tau)\le\rk_2(\Gamma_0)$, either $\Gamma_0/Z\cong M_{22}$ and 
$A_0$ is its Todd module or its dual, or $\Gamma_0/Z\cong3M_{22}$ and $A_0$ 
is the $6$-dimensional $\F_4\Gamma/Z$-module. By Lemma \ref{l:pG-rep}(b) 
and since $\Gamma$ acts faithfully on $A$, there must be indecomposable 
extensions of $A_0$ by $\F_2$ and of $\F_2$ by $A_0$. Thus 
$H^1(\Gamma/Z;A_0)\ne0$ and $H^1(\Gamma/Z;A_0^*)\ne0$ (where $A_0^*$ is the 
dual module), contradicting \cite[Lemma 6.1]{MS}. We conclude that no such 
faithful $\F_2\Gamma$-modules exist. 

\smallskip

\noindent\textbf{Case 3: } Assume $p=3$. We claim that 
$\scrj_{A_0}(\tau)>\rk_3(\Gamma_0)$ (and hence $(\Gamma,A)$ is not fusion 
realizable) in all cases except when $\Gamma_0\cong M_{11}$ or $2M_{12}$ 
and $A_0$ is the Todd module for $\Gamma_0$ or its dual. This follows from 
Table \ref{tbl:JA(t)-3} except when $\Gamma_0\cong M_{11}$, 
$\dim(A_0)=10$, and $A_0\oplus\F_3$ is the $11$-dimensional permutation 
module. But in that case, $\scrj_{A_0}(\tau)=3$ whenever $|\tau|=3$ since 
$\tau$ acts on an $11$-set with three free orbits. 

	\begin{Table}[ht]
	\[ \renewcommand{\arraystretch}{1.2} 
	\begin{array}{cccc|c} 
	\Gamma & \rk_3(\Gamma) & \dim(A_0) & \tau\in & \scrj_{A_0}(\tau) 
	\\\hline
	M_{11} & 2 & \ge10~\textup{(*)} & \3a & \scrj_{A_0}(\3a) 
	\ge\frac14(\chi_{A_0}(1)-\chi_{A_0}(\0{4a}))\ge \frac52  \\
	M_{12} & 2 & >1 & \3a,\3b & 
	\scrj_{A_0}(\3x)\ge\frac14(\chi_{A_0}(1)-\chi_{A_0}(\2a))\ge3  \\
	2M_{12} & 2 & >6 & \3a,\3b & \scrj_{A_0}(\3x)
	\ge\frac14(\chi_{A_0}(1)-\chi_{A_0}(\2a))\ge\frac52 \\
	M_{22} & 2 & >1 & \3a & \scrj_{A_0}(\3a) 
	\ge\frac14(\chi_{A_0}(1)-\chi_{A_0}(\2a))\ge 4 \\
	2M_{22} & 2 & >1 & \3a & \scrj_{A_0}(\3a) 
	\ge\frac14(\chi_{A_0}(1)-\chi_{A_0}(\2a))\ge 4 \\
	M_{23} & 2 & >1 & \3a & \scrj_{A_0}(\3a) 
	\ge\frac14(\chi_{A_0}(1)-\chi_{A_0}(\2a))\ge 4 \\
	M_{24} & 2 & >1 & \3a,\3b & 
	\scrj_{A_0}(\3x)\ge\frac14(\chi_{A_0}(1)-\chi_{A_0}(\2b))\ge6 
	\end{array} \]
	\caption{In all cases, $A_0$ is an $\F_3\Gamma$-module such that 
	$C_{A_0}(\Gamma)=0$ and $[\Gamma,A_0]=A_0$, and the characters are 
	taken with respect to $\F_3$. Thus when $\Gamma\cong2M_{22}$, the 
	character values for the simple 10-dimensional 
	$\4\F_3\Gamma$-module are doubled here since it can only be 
	realized over $\F_9$. When $\Gamma\cong M_{11}$, the bounds for 
	$\scrj_{A_0}(\tau)$ apply only when $A_0$ is not the 
	$10$-dimensional permutation module. The bounds for 
	$\scrj_{A_0}(\tau)$ all follow from Lemma \ref{l:M12,M24} and Lemma 
	\ref{l:J(x)ge...}(c), except when $\Gamma\cong M_{11}$ or $2M_{12}$ 
	where \ref{l:J(x)ge...}(d) is used.} 
	\label{tbl:JA(t)-3}
	\end{Table}

Finally, if $\Gamma_0\cong M_{11}$ or $2M_{12}$ and $A$ is the Todd 
module or its dual, then $A$ is absolutely irreducible by \cite[Lemmas 4.2 
and 5.2]{O-todd}, and hence $\Gamma\cong M_{11}$, $M_{11}\times C_2$, or 
$2M_{12}$.
\end{proof}

\section{Alperin's 2-groups of normal rank 3}
\label{s:Alp}

As an example of how the results in Section \ref{s:A<|F} can be applied 
when the abelian $p$-subgroup $A<S$ is not elementary abelian, we next 
look at some $2$-groups first studied by Alperin \cite{Alperin} and O'Nan 
\cite{ONan}. These are groups $A\nsg S$ where $A\cong C_{2^n}\times 
C_{2^n}\times C_{2^n}$ and $S/A\cong D_8$, with presentation given in Table 
\ref{tbl:D8onA}. They are characterized by Alperin \cite[Theorem 
1]{Alperin} as the Sylow $2$-subgroups of groups $G$ with normal subgroup 
$E\cong E_8$, such that $O(G)=1$, $\Aut_G(E)=\Aut(E)$ and all 
involutions in $C_G(E)$ lie in $E$. Our goal is to show how results in 
Section \ref{s:A<|F} can be applied to prove in 
the context of fusion systems a theorem of O'Nan's, by showing that $A$ 
is normal in all saturated fusion systems over $S$ \cite[Lemma 1.10]{ONan}.
	\begin{Table}[ht]
	\[ \renewcommand{\arraystretch}{1.4}
	\renewcommand{\arraycolsep}{4mm}
	\begin{array}{c|cccc} 
	v & v^t & v^s & v^{s^2} & v^{st} \\\hline
	v_1 & v_3^{-1} & v_2 & v_3 & v_2^{-1} \\
	v_2 & v_2^{-1} & v_3 & v_1v_2^{-1}v_3 & v_1^{-1} \\
	v_3 & v_1^{-1} & v_1v_2^{-1}v_3 & v_1 & v_1^{-1}v_2v_3^{-1} 
	\end{array}
	\]
	\caption{Let $S=A\gen{s,t}$, where $A=\gen{v_1,v_2,v_3}\cong 
	C_{2^n}\times C_{2^n}\times C_{2^n}$, 
	the elements $s$ and $t$ act on $A$ as described in the table, 
	and also $t^2=1$ and $s^4\in\gen{v_1v_3}$. Set 
	$T=\Aut_S(A)=\gen{c_s,c_t}\cong D_8$.} 
	\label{tbl:D8onA}
	\end{Table}

Before considering the groups $A\nsg S$ directly, we must first handle the 
following, simpler case (compare with \cite[Lemma 1.7]{ONan}). 

\begin{Lem} \label{l:ON}
Fix $n\ge2$, and let $\5S=\gen{v,w,\sigma}$ be a group of order $2^{2n+2}$, 
where $\5A=\gen{v,w}\cong C_{2^n}\times C_{2^n}$, and 
$\5S=\5A\rtimes\gen{\sigma}$ where $\sigma^4=1$, $v^\sigma=w$, and 
$w^\sigma=v^{-1}$. Then $\5A$ is normal in every saturated fusion system 
over $\5S$. 
\end{Lem}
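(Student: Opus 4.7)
The plan is to show $\5A$ is weakly closed in any saturated fusion system $\calf$ over $\5S$, then to argue by contradiction (via Proposition \ref{p:not.str.cl.3a}(c)) that $\5A$ must in fact be strongly closed, so that Lemma \ref{l:s.cl.=>normal} yields normality.

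For weak closure I would prove that $\5A$ is the unique abelian subgroup of order $2^{2n}$ in $\5S$. Any other such $U$ projects nontrivially to $\5S/\5A\cong C_4$ and so contains some $u=a\sigma^k$ with $k\in\{1,2,3\}$; abelianness of $U$ forces $U\cap\5A\le C_{\5A}(u)$. Since $C_{\5A}(\sigma)=Z(\5S)$ has order $2$ and $C_{\5A}(\sigma^2)=\Omega_1(\5A)$ has order $4$, comparing with the required value $|U\cap\5A|\in\{2^{2n-1},2^{2n-2}\}$ gives a contradiction whenever $n\ge 2$, so $U=\5A$.

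Suppose for contradiction that $\5A$ is not normal in some saturated $\calf$. By Lemma \ref{l:s.cl.=>normal} it is not strongly closed, so Proposition \ref{p:not.str.cl.3a}(c) supplies $Z\le\5A$ fully centralized in $\calf$ with $\5A\nnsg C_\calf(Z)$, and for each $t\in\VV{C_\calf(Z)}{\5A}$ a subgroup $U\in\UU{C_\calf(Z)}{\5A}$ yielding a triple $(c_t^{\5A},\Aut_U(\5A),U\cap\5A)\in\RR{C_T(Z)}{\5A}$, with $T=\Aut_{\5S}(\5A)\cong C_4$. The first step is to rule out $Z\not\le Z(\5S)$: otherwise $C_T(Z)\le\gen{c_{\sigma^2}}$, but $\Omega_1(\5A)\le 2\5A=[\sigma^2,\5A]$ (valid for $n\ge 2$) forces $|C_{\5A/A_*}(c_{\sigma^2})|=4$ via Proposition \ref{p:not.str.cl.3b}, and no $B\le\gen{c_{\sigma^2}}$ satisfies the required equality $|B|=|C_{\5A/A_*}(c_{\sigma^2})|$. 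Hence $Z\le Z(\5S)$ and $C_T(Z)=T$.

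Weak closure persists in $C_\calf(Z)$, so nonnormality there gives a nonempty $\VV{C_\calf(Z)}{\5A}$; any chosen $t$ has order $2$ or $4$, the only orders realized in $\5S\sminus\5A$. If $|t|=4$, then $c_t^{\5A}$ generates $T$, forcing $\Aut_U(\5A)=T$; the containment $Z\le Z(\5S)\le[\sigma,\5A]$ then gives $|C_{\5A/A_*}(c_t^{\5A})|=2$, contradicting $|T|=4$. If $|t|=2$ and $\Aut_U(\5A)=\gen{c_{\sigma^2}}$, the same count yields $2=|B|\ne 4=|C_{\5A/A_*}(c_{\sigma^2})|$. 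In the remaining case $|t|=2$ with $\Aut_U(\5A)=T$, the group $U$ must contain an element $u$ of order $4$ projecting to $\sigma$, and restricting the witnessing morphism $U\to\5A$ to $\gen{u}$ places $u$ in $\VV{C_\calf(Z)}{\5A}$, reducing to the order-$4$ case already ruled out. The main obstacle is the numerical bookkeeping: one must use $\Omega_1(\5A)\le 2\5A$ and $Z(\5S)=C_{\5A}(\sigma)\le[\sigma,\5A]$ (both requiring $n\ge 2$) to absorb $A_*$ into $[\tau,\5A]$, so that Proposition \ref{p:not.str.cl.3b} delivers the small value $|\5A|/|[\tau,\5A]|$ in each case.
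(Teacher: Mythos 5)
Your weak-closure argument and the elimination of $Z\not\le Z(\5S)$ are fine, but the core of the proposal contains a gap that cannot be repaired within this framework.

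The error is in the $|t|=4$ case: you write ``$c_t^{\5A}$ generates $T$, forcing $\Aut_U(\5A)=T$.'' Nothing in Definition \ref{d:K&R} or in Proposition \ref{p:not.str.cl.3a} forces $\tau\in B$. Tracing the construction in Proposition \ref{p:not.str.cl.2}(b), the subgroup $U$ is built as $\varphi(U_2^*)$ with $U_2^*\le A$; the element $t$ does \emph{not} lie in $U$ (indeed $\varphi(t)\in W\le A$ while $U=\varphi(U_2^*)$), so $c_t^{\5A}\notin\Aut_U(\5A)$ in general. With $\tau=c_\sigma$, $A_*\le Z\le Z(\5S)\le[\sigma,\5A]$ and Proposition \ref{p:not.str.cl.3b} give $|B|=|\5A|/|A_*[\sigma,\5A]|=2$, which is perfectly consistent with $B=\gen{c_{\sigma^2}}$. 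There is no contradiction. More fundamentally, the set you are trying to show is empty is in fact nonempty: the family
\[
\calr \;=\; \bigl\{(c_{\sigma^2},T,1),\ (c_\sigma,\gen{c_{\sigma^2}},1),\ (c_{\sigma^3},\gen{c_{\sigma^2}},1)\bigr\}
\]
lies in $\hRR{T}{\5A}$ for $n\ge2$ (one checks $|C_{\5A}(\sigma^2)|=4=|T|$ and $|C_{\5A}(\sigma)|=2=|\gen{c_{\sigma^2}}|$, and both $C_2$ and $C_4$ embed in $\5A$), and $\calr$ satisfies condition ($*$), so $\RR{T}{\5A}\ne\emptyset$. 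Thus the purely local/numerical invariants of Section \ref{s:A<|F} do not distinguish this $\5S$ from one in which $\5A$ fails to be normal, and no amount of bookkeeping of the type you propose can close the argument.

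The paper's proof necessarily uses additional input. It applies Proposition \ref{p:not.str.cl.2}(b,c) only to produce one $U$ with $U\5A=\5S$, and then invokes Theorem \ref{t:AFT} (Alperin's fusion theorem) to find an essential $R\le\5S$ and $\alpha\in\autf(R)$ moving a subgroup $A_1\le\5A$ in $U^\calf$ to some $U_1\nleq\5A$. The key observations are that $\5A\cap R$ then contains a $\sigma$-invariant subgroup with an element of order $4$, forcing $[R,R]\ge\Omega_1(\5A)$, and that $\alpha$ sends an element of $\Omega_1(\5A)\le[R,R]$ into the coset $\sigma^2\5A$, which is disjoint from $[R,R]$ --- a contradiction, since $[R,R]$ is $\alpha$-invariant. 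This commutator argument (or, alternatively, a transfer argument as the author remarks) supplies the global obstruction that the set $\RR{T}{\5A}$ is blind to, and your proof would need to be restructured around a step of that kind.
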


\begin{proof} Assume otherwise: assume $\calf$ is a saturated fusion system 
over $\5S$ for which $\5A\nnsg\calf$. Thus some element $t\in\5S\sminus\5A$ 
is $\calf$-conjugate to an element of $\5A$, and upon replacing $t$ by 
$t^2$ if necessary, we can arrange that $t\in\sigma^2\5A$. Since 
$|C_{\5A}(\sigma)|=2$ and $|C_{\5A}(\sigma^2)|=4$, each abelian subgroup of 
$\5S$ not contained in $\5A$ has order at most $8$, and hence $\5A$ is 
weakly closed in $\calf$. 

By Proposition \ref{p:not.str.cl.2}(b,c) and since $\5A$ is weakly closed in 
$\calf$, there is $U\le \5S$ $\calf$-conjugate to a subgroup of $\5A$ such 
that $(t,U,U\cap \5A)\in\WW{\calf}{\5A}$. In particular, 
$|U\5A/\5A|=|C_{\5A/(U\cap \5A)}(t)|$. 

Since conjugation by $t$ sends each element of $\5A$ to its inverse, $U\cap 
\5A\le C_{\5A}(t)=\Omega_1(\5A)$, and hence $C_{\5A/(U\cap 
\5A)}(t)=\Omega_1(\5A/(U\cap \5A))$ has order $4$. Thus $|U\5A/\5A|=4$, and 
so there is $u\in U$ such that $u\in\sigma \5A$. 

We claim that for each $U^*\in U^\calf$, either $U^*\5A=\5S$ or $U^*\le 
\5A$. Assume otherwise: then $U^*\5A=\5A\gen{\sigma^2}$. So $U^*\cap \5A\le 
C_{\5A}(\sigma^2)=\Omega_1(\5A)$, and $U^*$ is elementary abelian since 
each element of $\sigma^2 \5A$ has order $2$. Since $U\cong U^*$ is not 
elementary abelian (recall $|u|=4$), this is impossible.

By Theorem \ref{t:AFT} (Alperin's fusion theorem), there is a subgroup 
$R\le \5S$, together with an automorphism $\alpha\in\autf(R)$ and subgroups 
$A_1$ and $U_1=\alpha(A_1)$, such that $A_1,U_1\in U^\calf$, $A_1\le \5A$, 
and $U_1\nleq \5A$. We just saw that this implies $U_1\5A=\5S$. 
So $\5A\cap R$ contains a cyclic subgroup of 
order $4$ and is normalized by $\sigma$. Hence 
$R\ge\gen{v^{2^{n-1}},(vw)^{2^{n-2}}}$, and so $[R,R]\ge\Omega_1(\5A)$. Since 
$\alpha$ sends some element of $\Omega_1(\5A)$ to an element in the coset 
$\sigma^2\5A\nsubseteq[R,R]$, this is impossible. 
\end{proof}

Lemma \ref{l:ON} can also be proven using the transfer for $\calf$ (see, 
e.g., \cite[\S I.8]{AKO}) to show that no element $x^2$, for $x\in\sigma 
\5A$, can be in the focal subgroup of $\calf$. Such an argument would be 
closer to that used by O'Nan in the proof of \cite[Lemma 1.7]{ONan}, but we 
wanted to apply the tools used elsewhere in this paper.

We now return to the groups $A\nsg S$ defined by the presentation in Table 
\ref{tbl:D8onA}. We first check that when $n\ge2$, $A$ is weakly closed in 
every saturated fusion system over $S$:

\begin{Lem}[{\cite[Lemma 1.5]{ONan}}] \label{l:Alp-w.cl.}
Let $S=A\gen{s,t}$ be an extension of the form described in Table 
\ref{tbl:D8onA}, where $n\ge2$. Then $A$ is the only abelian subgroup of 
index $8$ in $S$, and hence is weakly closed in every saturated fusion 
system over $S$.
\end{Lem}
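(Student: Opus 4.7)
Any abelian subgroup $B\le S$ of index $8$ satisfies $|B|=|A|$. If $B\ne A$, then $B\not\ge A$: otherwise $B\le C_S(A)=A$, forcing $B=A$. Hence $B\cap A<A$ and $B/(B\cap A)\cong BA/A$ embeds as an abelian subgroup of $S/A\cong D_8$, which has order $1$, $2$, or $4$. The plan is to rule out the two non-trivial options by showing that $B\cap A$ is too small, using the fact that for $b\in B$, every element of $B\cap A$ commutes with $b$, so that $B\cap A\le\bigcap_{b\in B\setminus A}C_A(b)$.

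First I would tabulate $|C_A(x)|$ for a representative of each non-trivial coset of $A$ in $S$, using the action given in Table \ref{tbl:D8onA}. A short linear-algebra calculation modulo $2^n$ should give
\[
|C_A(x)|=\begin{cases} 2^n & \text{if $xA\in\{sA,s^3A\}$,}\\ 2^{n+1} & \text{if $xA$ has order $2$ in $S/A$.}\end{cases}
\]
Next, for each of the three abelian subgroups of order $4$ in $S/A$, namely $\langle sA\rangle$, $\langle s^2A,tA\rangle$ and $\langle s^2A,stA\rangle$, I would compute the intersection $C_A(x)\cap C_A(y)$ of the centralizers of two coset representatives generating the subgroup, and verify (again by solving the linear system over $\Z/2^n$) that this intersection has order at most $4$ when $n\ge2$.

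With these centralizer bounds in hand, the two cases dispose of themselves: if $|BA/A|=4$ then $|B\cap A|=2^{3n-2}\ge 16$ for $n\ge2$, contradicting $|B\cap A|\le 4$; if $|BA/A|=2$ then $|B\cap A|=2^{3n-1}\ge 32$ for $n\ge2$, contradicting $|B\cap A|\le\max_x|C_A(x)|=2^{n+1}\le 8$. So $B=A$, proving the first statement. The weakly closed assertion is then immediate: any $\calf$-conjugate of $A$ in any saturated fusion system over $S$ is again an abelian subgroup of $S$ of index $8$, hence equals $A$.

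The main obstacle, such as it is, will be the bookkeeping for the five reflection cosets in Step~1 and the three abelian order-$4$ subgroups in Step~2; these are routine computations with $3\times 3$ integer matrices modulo $2^n$, but must all be carried out to be sure no exceptional coincidence enlarges one of the centralizers. The hypothesis $n\ge2$ is essential precisely because the worst bound $|C_A(x)\cap C_A(y)|\le 4$ already fails to beat $|B\cap A|=2^{3n-2}$ when $n=1$.
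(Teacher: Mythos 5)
Your strategy is the same as the paper's — bound $B\cap A$ above by a centralizer $C_A(x)$, bound it below via $|B|=|A|$ and $|BA/A|\le4$, and compare — but the paper does this in one line while you over-elaborate in a way that introduces an error. The paper's proof notes that $|B\cap A|\ge|A|/4=2^{3n-2}$ (since $BA/A$ is abelian of order at most $4$ in $D_8$), that $B\cap A\le C_A(x)$ for any $x\in B\sminus A$, and that Table \ref{tbl:CA(H)} gives $|C_A(x)|\le2^{n+1}$; the resulting inequality $2^{3n-2}\le2^{n+1}$ forces $n\le1$. This single comparison handles both of your cases $|BA/A|=2$ and $|BA/A|=4$ at once, with no need for your step~4.

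Step~4 as stated is actually false. You claim the intersection of the centralizers of generators of each abelian order-$4$ subgroup of $S/A$ has order at most $4$. For the two Klein-four subgroups $\gen{s^2A,tA}$ and $\gen{s^2A,stA}$ that is correct: Table \ref{tbl:CA(H)} gives $C_A(\gen{s^2,t})=\gen{v_1^{\delta}v_2^{\gee}v_3^{-\delta}}\cong C_4$ and $C_A(\gen{s^2,st})=\gen{v_1^{\gee}v_3^{\gee},v_2^{\gee}v_3^{\gee}}\cong E_4$. But for the cyclic subgroup $\gen{sA}$ you get $C_A(\gen{s})=\gen{v_1v_3}\cong C_{2^n}$, which has order $2^n>4$ as soon as $n\ge3$. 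So the ``at most $4$'' bound is wrong for $n\ge3$ in that case. The argument is nonetheless salvageable — replace ``$\le4$'' with ``$\le 2^{n+1}$'' and observe $2^{3n-2}>2^{n+1}$ for $n\ge2$ — but then you have reproduced the paper's one-line proof and the case split is pointless. Similarly, your concluding clause ``$\max_x|C_A(x)|=2^{n+1}\le8$'' holds only when $n=2$; what you actually need (and what is true for all $n\ge2$) is the comparison $2^{3n-1}>2^{n+1}$, which does not require an absolute bound of $8$.
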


\begin{proof} This follows immediately from the centralizers listed in 
Table \ref{tbl:CA(H)}, since if $A_1<S$ were abelian of index $8$ and 
$A_1\ne A$, then for $x\in A_1\sminus A$, the subgroup $C_A(x)\ge A\cap 
A_1$ would have index at most $4$ in $A$.
	\begin{Table}[ht]
	\[ \renewcommand{\arraystretch}{1.4}
	\begin{array}{c|cccccc} 
	H & \gen{t} & \gen{s^2} & \gen{st} & \gen{s} & \gen{s^2,t} 
	& \gen{s^2,st} \\\hline
	C_A(H) & \gen{v_1v_3^{-1},v_2^\gee} & \gen{v_1v_3,v_2^\gee 
	v_3^\gee} & \gen{v_1v_2^{-1},v_2^\gee v_3^\gee} & \gen{v_1v_3} 
	& \gen{v_1^\delta v_2^\gee v_3^{-\delta}} & \gen{v_1^\gee v_3^\gee, 
	v_2^\gee v_3^\gee} \\
	{}[H,A] & \gen{v_1v_3,v_2^2} & \gen{v_1v_3^{-1},v_1^2v_2^{-2}} 
	& \gen{v_1v_2,v_2^2v_3^{-2}} & \gen{v_1v_2^{-1},v_2v_3^{-1}} &  & 
	\end{array}
	\]
	\caption{Centralizers and commutators involving some of the abelian 
	subgroups $H\le\gen{s,t}$. Here, $\gee=2^{n-1}$ and 
	$\delta=2^{n-2}$.} 
	\label{tbl:CA(H)}
	\end{Table}
\end{proof}

The arguments used in the proof of the following theorem are essentially 
the same as O'Nan's (when proving Lemma 1.10 in \cite{ONan}), but 
repackaged with the help of Proposition \ref{p:not.str.cl.3a} and the 
properties of the sets $\RR{T}{A}$.

\begin{Thm}[{\cite[Lemma 1.10]{ONan}}] \label{t:Alp}
Let $S=A\gen{s,t}$ be an extension of the form described in Table 
\ref{tbl:D8onA}, where $n\ge3$. Then $A$ is normal in every saturated 
fusion system $\calf$ over $S$. 
\end{Thm}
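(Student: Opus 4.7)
The plan is to argue by contradiction. Assume $A\nnsg\calf$. Since $n\ge 3$, Lemma \ref{l:Alp-w.cl.} implies $A$ is the unique abelian subgroup of index $8$ in $S$, and in particular is weakly closed in $\calf$. Proposition \ref{p:not.str.cl.3a}(c) then supplies a subgroup $Z\le A$, fully centralized in $\calf$, with $A\nnsg\calf_0:=C_\calf(Z)$, together with triples $(c_t^A,\Aut_U(A),U\cap A)\in\RR{C_T(Z)}{A}$ with $U\cap A\le Z$ for each $t\in\VV{\calf_0}{A}$.

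The first main step is to use Proposition \ref{p:not.str.cl.3b} together with Table \ref{tbl:CA(H)} to pin down $C_T(Z)$ and $Z$. The identity $|B|=|A|/|A_*[\tau,A]|$, combined with $|[\tau,A]|=2^{2n-1}$ for each involution $\tau\in T$ and $|[c_s,A]|=2^{2n}$, translates into lower bounds on $|A_*|$ in terms of $|B|$. For involutions $\tau$ with $|B|=4$ this gives $|A_*|\ge 2^{n-1}$, while $A_*\le C_A(\gen{B,\tau})\cap Z$ together with the row of Table \ref{tbl:CA(H)} listing centralizers of Klein-four subgroups of $T$ gives $|C_A(H)|\le 4$. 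For $n\ge 4$ these bounds are incompatible with $B$ non-cyclic, and for $n=3$ a short separate argument handles the boundary case where $A_*=C_A(\gen{c_{s^2},c_{st}})$ by invoking condition $(*)$ of Definition \ref{d:K&R} on the other elements of $B^\#$. Together with $(*)$, this forces $C_T(Z)=\gen{c_s}$, and the constraint $A_*\le C_A(c_s)\cap Z$ combined with $|A_*|\ge 2^{n-1}$ then forces $Z=C_A(c_s)=\gen{v_1v_3}$.

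The second main step is a quotient argument invoking Lemma \ref{l:ON}. With $Z=\gen{v_1v_3}$, a direct computation from Table \ref{tbl:D8onA} gives $C_S(Z)=A\gen{s}$, and $C_S(Z)/Z$ is generated by $A/Z\cong C_{2^n}\times C_{2^n}$ together with $sZ$, which has order $4$ and acts by $v_1Z\mapsto v_2Z\mapsto v_3Z=(v_1Z)^{-1}$. Hence $C_S(Z)/Z$ is exactly the group $\5S$ of Lemma \ref{l:ON}. Since $Z$ is central in $\calf_0$, the quotient $\calf_0/Z$ is a saturated fusion system over $C_S(Z)/Z$, and by Lemma \ref{l:ON} the subgroup $A/Z$ is normal---hence strongly closed---in $\calf_0/Z$.

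The contradiction is then obtained as follows: pick $t\in\VV{\calf_0}{A}$ (nonempty by Proposition \ref{p:not.str.cl.2}) and a morphism $\varphi\in\Hom_{\calf_0}(\gen{t},A)$. The extension axiom applied inside $\calf_0=C_\calf(Z)$ lifts $\varphi$ to $\tilde\varphi\colon\gen{t}Z\to A$ fixing $Z$ pointwise, and the quotient $\tilde\varphi/Z$ is a morphism in $\calf_0/Z$ carrying $tZ\notin A/Z$ into $A/Z$, contradicting the strong closure of $A/Z$ in $\calf_0/Z$. The main obstacle is the first main step: pinning down $Z=\gen{v_1v_3}$ requires a somewhat delicate combination of the numerical bounds of Proposition \ref{p:not.str.cl.3b}, the centralizer data of Table \ref{tbl:CA(H)}, and condition $(*)$ of Definition \ref{d:K&R}, with a bit of extra care needed in the boundary case $n=3$.
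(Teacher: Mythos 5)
Your proposal follows the same overall route as the paper: reduce via Proposition \ref{p:not.str.cl.3a}(c) to a triple in $\RR{C_T(Z)}{A}$, use Proposition \ref{p:not.str.cl.3b} and Table \ref{tbl:CA(H)} to identify $Z$, and then push the problem into $C_\calf(Z)/Z$ and apply Lemma \ref{l:ON}. The second and third steps are fine (your ``lift then project'' contradiction is an equivalent rephrasing of the paper's observation that $A\nnsg C_\calf(Z)$ implies $A/Z$ is not strongly closed in $C_\calf(Z)/Z$).

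There is, however, a genuine gap in the first step, precisely at the line ``the constraint $A_*\le C_A(c_s)\cap Z$ combined with $|A_*|\ge 2^{n-1}$ then forces $Z=C_A(c_s)=\gen{v_1v_3}$.'' From $|A_*|\ge 2^{n-1}$ and $A_*\le Z\le\gen{v_1v_3}$ you can only conclude $Z\in\{\gen{v_1v_3},\gen{(v_1v_3)^2}\}$. The case $Z=\gen{(v_1v_3)^2}$ cannot be discarded by these bounds, and it \emph{must} be discarded: for that $Z$ one has $A/Z\cong C_{2^n}\times C_{2^n}\times C_2$, which is not the group $\5A$ of Lemma \ref{l:ON}, so the quotient reduction collapses. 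The bound $|A_*|\ge 2^{n-1}$ is too loose because it comes from $|A_*[\tau,A]|\le|A_*|\cdot|[\tau,A]|$, which ignores the intersection. Since $A_*\le\gen{v_1v_3}$ and $(v_1v_3)^{2^{n-1}}\in[\tau,A]$ for $\tau=c_{s^2}$, any nontrivial $A_*\le\gen{v_1v_3}$ meets $[\tau,A]$, so $|A_*\cap[\tau,A]|\ge2$ and hence $|A_*|=|A_*[\tau,A]|\cdot|A_*\cap[\tau,A]|/|[\tau,A]|\ge 2^n$, giving $A_*=Z=\gen{v_1v_3}$. The paper avoids this issue by working with the full force of Proposition \ref{p:not.str.cl.3b}\eqref{e:B1}: since $|B|=|C_A(\tau)\cap[\tau,A]|=4$, one gets the exact equality $A_*[\tau,A]=C_A(\tau)[\tau,A]$ and then compares $C_A(B)[\tau,A]$ with $C_A(\tau)[\tau,A]=\gen{v_1v_3,v_1^2,v_2^2}$ for each candidate $B$; that comparison both rules out the two Klein-four choices of $B$ (for all $n\ge3$ at once, with no boundary case) and pins down $A_*=\gen{v_1v_3}$ in one shot.

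Related but more minor: your claim that the Klein-four cases are ``incompatible'' for $n\ge4$ by pure counting is correct, but for $n=3$ the size bounds do leave $A_*=C_A(B)$ (order $4$) open, and the ``short separate argument via $(*)$'' you gesture at is not spelled out; here too the equality $A_*[\tau,A]=C_A(\tau)[\tau,A]$ from \eqref{e:B1} (rather than the inequality \eqref{e:B2}) is what kills this case directly, uniformly in $n$.
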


\begin{proof} Assume otherwise: assume $\calf$ is such that $A\nnsg\calf$. By 
Proposition \ref{p:not.str.cl.3a}(c) and since $A$ is weakly 
closed in $\calf$ by Lemma \ref{l:Alp-w.cl.}, there is a subgroup 
$Z\le A$ fully centralized in $\calf$ such that $A\nnsg 
C_\calf(Z)$, and such that for each 
$u\in\VV{C_\calf(Z)}{A}$ there is $U\in\UU{C_\calf(Z)}{A}$ such 
that $U\cap A\le Z$ and $(c_u^A,\Aut_U(A),U\cap A)\in\RR{T}{A}$. Set 
$\tau=c_u^A$; we can assume that $|\tau|=2$. Set $B=\Aut_U(A)$ and 
$A_*=U\cap A$. 

By Table \ref{tbl:CA(H)}, we have $|C_A(\tau)\cap[\tau,A]|=4$. So 
$|B|\ge4$ by inequality \eqref{e:B2} in Proposition \ref{p:not.str.cl.3b}, 
with equality since $T\cong D_8$ has no abelian subgroups of order $8$. 
Hence 
	\beqq C_A(B)[\tau,A] \ge A_*[\tau,A] = C_A(\tau)[\tau,A], 
	\label{e:Alp1} \eeqq
where the equality follows from \eqref{e:B1} in Proposition 
\ref{p:not.str.cl.3b}.

Since $|B|=4$, we have $c_{s^2}\in B$. So we can choose $u\in s^2A$ with 
$u\in\VV{C_\calf(Z)}{A}$ (thus $C_\calf(Z)$-conjugate to an 
element of $A$), and hence $\tau=c_u^A=c_{s^2}$. By Table 
\ref{tbl:CA(H)},
	\[ [\tau,A]=\gen{v_1v_3^{-1},v_1^2v_2^{-2}} 
	\qquad\textup{and}\qquad 
	C_A(\tau)[\tau,A]=\gen{v_1v_3,v_1^2,v_2^2}. \]
So by Table \ref{tbl:CA(H)}, point \eqref{e:Alp1} fails when 
$B=\gen{s^2,t}$ or $\gen{s^2,st}$, and holds only when $B=\gen{s}$ and 
$A_*=C_A(s)=\gen{v_1v_3}$. Since $A_*\le Z\le C_A(B)$ by assumption, 
we have $Z=\gen{v_1v_3}$.

Set $\5\calf=C_\calf(Z)/Z$, $\5A=A/Z$, and 
$\5S=C_S(Z)/Z$ (see Definition \ref{d:F/Q}). Then $A\nnsg 
C_\calf(Z)$ by assumption, hence is not strongly closed by Lemma 
\ref{l:s.cl.=>normal}, and so $A/Z$ is not strongly closed in 
$C_\calf(Z)/Z$. Thus $\5A\nnsg\5\calf$. Let $v,w,\sigma\in \5S$ be 
the classes (modulo $Z$) of $v_1,v_2,s\in S$. Then $\5A\nsg\5S$ are as 
in Lemma \ref{l:ON}, so $\5A\nsg\5\calf$ by that lemma, giving a 
contradiction. 
\end{proof}

\appendix

\section{Some lemmas in representation theory}
\label{s:JV(x)}

Recall Notation \ref{d:JV(x)}: when $V$ is an elementary abelian 
$p$-group and $\tau\in\Aut(V)$ has order $p$, we set 
	\[ \scrj_V(\tau)=\rk\bigl(C_V(\tau)\cap[\tau,V]\bigr): \]
the number of nontrivial Jordan blocks under the action of $\tau$ on $V$. 
We derive here some formulas that give lower bounds for these functions in 
terms of Brauer characters.

The first lemma gives, in certain cases, lower bounds for $\scrj_V(x)$ in 
terms of the modular character of $V$. When $q$ is a prime and $q\nmid n$, 
we let $\ord_q(n)$ denote the order of $n$ in the group $\F_q^\times$. 

\begin{Lem} \label{l:J(x)ge...}
Fix a prime $p$, an elementary abelian $p$-group $V$, and an element 
$x\in\Aut(V)$ of order $p$. Let $\chi=\chi_V$ be the modular character of 
$V$ as an $\F_p\Aut(V)$-module.
\begin{enuma} 
\item Assume $p=2$, and let $q$ be an odd prime such that $\ord_q(2)=q-1$. 
Let $a\in\Aut(V)$ be such that $|a|=q$ and $\gen{a,x}\cong 
D_{2q}$. Then 
	\[ \scrj_V(x) \ge 
	\tfrac{q-1}{2q}\bigl(\chi_V(1)-\chi_V(a)\bigr). \]

\item Let $q$ be a prime such that $p\mid(q-1)$, and let $a\in\Aut(V)$ be 
such that $|a|=q$ and $\gen{a,x}$ is nonabelian of order $pq$. Then 
	\[ \scrj_V(x) \ge \frac1{pq}\sum_{i=1}^{q-1}
	\bigl(\chi_V(1)-\chi_V(a^i)\bigr). \]

\item Assume $p=3$, and let $a\in\Aut(V)$ be such that $\gen{a,x}\cong 
A_4$ and $|a|=2$. Then 
	\[ \scrj_V(x) \ge \tfrac14\bigl(\chi_V(1)-\chi_V(a)\bigr). \]

\item Assume $p=3$, and let $a\in\Aut(V)$ be such that 
$\gen{a,x}\cong2A_4$ and $|a|=4$. Then 
	\[ \scrj_V(x) \ge \tfrac14\bigl(\chi_V(1)-\chi_V(a)\bigr). \] 
\end{enuma}
\end{Lem}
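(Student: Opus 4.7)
The plan is to treat all four parts in a common framework, with part (d) requiring an additional step. I use throughout the identity $\scrj_V(\tau) = \rk(C_V(\tau) \cap [\tau,V])$ equals the number of Jordan blocks of size at least $2$ in the action of $\tau$ on $V$, which is immediate from the observation that in a size-$k$ Jordan block the socle (1-dimensional) lies in the image of $\tau-1$ precisely when $k \geq 2$.

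Set $G = \gen{a, x}$, and choose a normal subgroup $N \nsg G$ of order coprime to $p$, containing $a$, with $G/N$ cyclic of order $p$: explicitly $N = \gen{a}$ in (a) and (b); $N = V_4 \leq A_4$ in (c); and $N = O_2(G) \cong Q_8$ in (d). In all four cases $G = N \rtimes \gen{x}$. The central idempotent $e_N = |N|^{-1} \sum_{n \in N} n \in \F_p G$ splits $V = V^N \oplus V^*$ as a direct sum of $\F_p G$-modules, where $V^*$ has no nonzero $N$-fixed vectors. Since $a \in N$ acts trivially on $V^N$, the identity $\chi_V(1) - \chi_V(a^i) = \chi_{V^*}(1) - \chi_{V^*}(a^i)$ holds for every $i$, so it suffices to bound $\scrj_{V^*}(x)$ from below.

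For parts (a), (b), (c), the Hochschild--Serre identification $\Ext^1_G(S, M) \cong H^1(G/N, \Hom_N(S, M))$ (valid since $|N|$ is invertible in $\F_p$) shows that every simple $\F_p G$-module $S$ with $S^N = 0$ is projective: $\Hom_N(S, \F_p) = 0$, and $\End_N(S)$ is either $\F_{2^{q-1}}$ with involutive action (in (a), where $H^1(C_2, \F_{2^{q-1}}) = 0$ by a Herbrand-type computation) or a free $\F_p[G/N]$-module (in (b), (c), hence cohomologically trivial). Consequently $V^*$ decomposes as a direct sum of such simples, and both $\scrj_{V^*}(x)$ and the relevant character differences can be read off summand-by-summand. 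The per-summand data are: in (a), $W \cong \F_{2^{q-1}}$ with $x$ acting by the Galois involution $\zeta \mapsto \zeta^{-1}$ gives $\scrj_W(x) = (q-1)/2$ and $\chi_W(1) - \chi_W(a) = q$; in (b), each $\Ind_{\gen{a}}^G \chi$ restricts by Mackey on $\gen{x}$ to a single size-$p$ Jordan block, contributing $1$ to $\scrj$ and $pq$ to $\sum_{i=1}^{q-1}(\chi_S(1) - \chi_S(a^i))$; in (c), the $3$-dimensional simple $U$ contributes $1$ to $\scrj$ and $4$ to $\chi_U(1) - \chi_U(a)$. Summing over summands yields (a), (b), (c), in fact as equalities.

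Part (d) is the main obstacle, since the $2$-dimensional natural $SL_2(\F_3)$-module $W_2$ is \emph{not} projective over $\F_3 G$ (already $H^1(A_4, \F_3) \neq 0$ forces $\Ext^1_G(W_2, W_2) \neq 0$), so the direct strategy fails. To sidestep this, decompose $V = V^+ \oplus V^-$ into the $\pm 1$-eigenspaces of $z = a^2 \in Z(G)$, both $G$-stable since $z$ is central. On $V^+$ the action factors through $G/\gen{z} \cong A_4$, with the image of $a$ an involution generating $A_4$ together with $\bar x$, so part (c) applies to give $\scrj_{V^+}(x) \geq \tfrac14(\chi_{V^+}(1) - \chi_{V^+}(a))$. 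On $V^-$ every composition factor is $W_2$, and since $a^2 = -1$ on $W_2$ the Brauer-character value $\chi_{W_2}(a) = i + (-i) = 0$ vanishes, so $\tfrac14(\chi_{V^-}(1) - \chi_{V^-}(a)) = n_{W_2}/2$ where $n_{W_2}$ is the number of composition factors. The long exact cohomology sequence for $\gen{x}$ applied along a composition series of $V^-$ forces $\dim C_{V^-}(x) \leq n_{W_2} \cdot \dim C_{W_2}(x) = n_{W_2}$, hence $\dim[x, V^-] \geq n_{W_2}$; writing $k_i$ for the number of size-$i$ Jordan blocks of $x$ on $V^-$ ($i \leq 3$), this reads $2k_3 + k_2 \geq n_{W_2}$, so $\scrj_{V^-}(x) = k_3 + k_2 \geq (2k_3 + k_2)/2 \geq n_{W_2}/2$. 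Adding the two bounds completes (d). The essential difficulty throughout is exactly this non-projectivity of $W_2$, which forces both the central-character decomposition and the elementary Jordan-block optimization.
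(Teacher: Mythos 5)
Your proposal reaches the correct bounds, and the core idea coincides with the paper's, but for (a)--(c) you route through representation-theoretic machinery that the paper avoids, and some of your per-summand bookkeeping is imprecise (though the conclusion survives); part (d) is essentially the paper's argument.

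For (a)--(c) the paper proves (b) directly: writing $V=C_V(a)\oplus V^*$ with $V^*$ the sum of nontrivial $a$-eigenspaces, it observes that $x$ permutes those eigenspaces over $\4\F_p$ with all orbits of size $p$ (since $\gen{x}$ acts freely on $N^\#$), so every Jordan block of $x$ on $V^*$ has length $p$, giving $\scrj_V(x)\ge\scrj_{V^*}(x)=\tfrac1p\dim V^*$; (a) is then the special case where $\chi_V(a^i)=\chi_V(a)$ for all $i$ prime to $q$, and (c) is the same count with $N=\gen{a,b}\cong E_4$. You instead argue that $V^*$ is semisimple as an $\F_pG$-module (via Hochschild--Serre and projectivity of its composition factors) and then compute summand by summand. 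That works, but it is avoidable: the free-permutation observation already pins down the Jordan structure of $x$ on $V^*$ without any semisimplicity. Moreover your per-summand accounting in (b) is off when $d:=\ord_q(p)>1$: you write that each induced simple ``contributes $1$ to $\scrj$ and $pq$ to the sum,'' but a simple $S$ with $S^N=0$ in fact has $\scrj_S(x)=\dim(S)/p$ and contributes $q\dim(S)$, and when $p\mid d$ the relevant $N$-simple $W$ is $G$-invariant and \emph{extends} rather than induces, a case your dichotomy does not treat. The ratio $\scrj_S(x)\big/\tfrac1{pq}\sum_i(\chi_S(1)-\chi_S(a^i))$ is still $1$ in every case, so the inequality holds, but as stated your computation only covers $d=1$.

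For (d) your argument matches the paper's: split $V=V_+\oplus V_-$ by the central involution $z=a^2$, use (c) on $V_+$, and on $V_-$ deduce $\dim C_{V_-}(x)\le\tfrac12\dim V_-$ from a filtration with $2$-dimensional faithful layers, then convert to a lower bound on nontrivial Jordan blocks. The only difference is cosmetic: the paper constructs such a filtration by a fixed-point count on the set of $2$-dimensional $Q_8$-submodules, whereas you invoke a composition series together with the fact that the unique simple $\F_3[2A_4]$-module with $z$ acting as $-1$ is $2$-dimensional --- a legitimate shortcut.
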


\begin{proof} \noindent\textbf{(b) } Since $\gen{a,x}$ is 
nonabelian of order $pq$, where $p\mid(q-1)$ and $|a|=q$, we have 
	\[ \dim(V/C_V(a)) = \chi_V(1) - \frac1q\sum_{i=0}^{q-1}\chi_V(a^i) 
	= \frac1q\sum_{i=1}^{q-1}(\chi_V(1)-\chi_V(a^i)). \]
The action of $x$ on $\4\F_p\otimes_{\F_p}(V/C_V(a))$ 
permutes freely the eigenspaces for $a$, corresponding to the primitive 
$q$-th roots of unity in $\4\F_p$. So all Jordan blocks 
for this action have length $p$, and the same holds for Jordan blocks 
for the action of $x$ on $V/C_V(a)$. So 
$\scrj_V(x)\ge\scrj_{V/C_V(a)}(x)=\frac1p\dim(V/C_V(a))$.

\smallskip

\noindent\textbf{(a) } Since $|a|=q$ and $\ord_q(2)=q-1$, we have 
$\chi_V(a^i)=\chi_V(a)$ for all $i$ prime to $q$. So this is a special case 
of (b). 

\smallskip

\noindent\textbf{(c) } Let $b\in\gen{a,x}\cong A_4$ be such that 
$\gen{a,b}\cong E_4$. Since $a$, $b$, and $ab$ are permuted cyclically by 
$x$, they all have the same character. Hence each of the three 
nontrivial irreducible characters for $\gen{a,b}\cong E_4$ 
appears with multiplicity
	\[ n = \tfrac13\dim(V/C_V(\gen{a,b})) = \tfrac13\bigl( \chi_V(1) - 
	\tfrac14(\chi_V(1)+3\chi_V(a)) \bigr) = \tfrac14(\chi_V(1)-\chi_V(a)) 
	. \]
Since $x$ permutes those three characters cyclically, we have 
$\scrj_V(x)\ge n$.

\smallskip

\noindent\textbf{(d) } Set $H=\gen{a,x}\cong2A_4$ where $|a|=4$, and set 
$z=a^2\in Z(H)$. Then $V=V_+\oplus V_-$ as $\F_3H$-modules, where $V_\pm$ 
are the eigenspaces for the action of $z$, and it suffices to prove the 
claim when $V=V_+$ or $V=V_-$. The case $V=V_+$ was shown in (c).

Now assume $V=V_-$, and set $m=\dim(V)=\chi_V(1)$ and $H_0=O_2(H)\cong 
Q_8$. Let $W$ be the (unique) irreducible $2$-dimensional $\F_3H_0$-module. 
Then $V|_{H_0}\cong W^{m/2}$, and $\Hom_{\F_3H_0}(W,V)\cong\F_3^{m/2}$ 
since $\End_{\F_3H_0}(W)\cong\F_3$. So there are $\frac12(3^{m/2}-1)$ 
submodules of $V|_{H_0}$ isomorphic to $W$, they are permuted by 
$\gen{x}\cong C_3$, and hence there is at least one $2$-dimensional 
$\F_3H$-submodule $W_1\le V$. By applying the same argument to $V/W_1$ and 
then iterating, we get a sequence $0=W_0<W_1<\cdots<W_k=V$ of 
$\F_3H$-submodules such that $\dim(W_i/W_{i-1})=2$ for each $1\le i\le k$. 
Then $\dim(C_{W_i/W_{i-1}}(x))=1$ for each $i$, so $\dim(C_{V}(x))\le m/2$, 
and $\dim([x,V])\ge m/2$. Each nontrivial Jordan block in $V$ has dimension 
$2$ or $3$, and intersects with $[x,V]$ with dimension $1$ or $2$, 
respectively. Thus 
	\[ \scrj_{V}(x) \ge \tfrac12\dim([x,V]) \ge \tfrac14m = 
	\tfrac14\chi_V(1) = \tfrac14(\chi_V(1)-\chi_V(a)), \]
the last equality since $\chi_V(a)=0$ (recall $a^2=z$ acts on $V$ via 
$-\Id$). 
\end{proof}

The next lemma is needed to handle $\F_p\Gamma$-modules in certain cases 
where $O_p(\Gamma)\ne1$.

\begin{Lem} \label{l:pG-rep}
Fix a prime $p$, a finite group $G$ such that $O^{p}(G)=G$, and a subgroup 
$1\ne Z\le Z(G)$ of $p$-power order. Set $\4G=G/Z$. Let $V$ be a faithful 
indecomposable $\F_pG$-module. Then either 
\begin{enuma} 
\item among the composition factors of $V$, there are at least two simple 
$\F_pG$-modules with nontrivial action of $G$; or 

\item there are submodules $0\ne V_0<V_1<V$ such that $G$ acts trivially on 
$V_0$ and on $V/V_1$, the $\F_p\4G$-module $V_1/V_0$ is simple, and 
$V_1$ and $V/V_0$ have trivial $Z$-action and are indecomposable 
$\F_p\4G$-modules.

\end{enuma}
Furthermore, in the situation of (b), for each $g\in G\sminus Z$, 
we have $\rk([h,V_1/V_0])=\rk([h,V])$ for at most one element $h\in gZ$. 
Thus if $p=2$ and $|g|=2$, there is $h\in gZ$ of order $2$ such that 
$\scrj_V(h)>\scrj_{V_1/V_0}(h)$.
\end{Lem}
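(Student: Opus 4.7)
The plan is to prove the at-most-one statement via a rank formula plus a uniqueness argument using the faithfulness of the $Z$-action on $V$, and then to deduce the involution statement by counting involutions in $gZ$. Write $W = V_1/V_0$ throughout, and fix $g \in G \sminus Z$.

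First I would establish a rank formula. Since $Z$ acts trivially on $V_1$ and on $V/V_0$, every $h = gz$ with $z \in Z$ acts as $g$ on each of $V_1$, $V/V_0$ and $W$; in particular $[h,W] = [g,W]$. Because $g$ acts trivially on $V/V_1$, $[h,V] = (h-1)V \subseteq V_1$, and the projection $V_1 \twoheadrightarrow W$ sends $[h,V]$ onto the image of $g-1 \colon V/V_0 \to V/V_0$, namely $[g, V/V_0] \subseteq W$. Since $\ker(V_1 \twoheadrightarrow W) = V_0$, rank-nullity gives
\[
\rk([h,V]) \;=\; \rk([g, V/V_0]) + \rk\bigl([h,V] \cap V_0\bigr).
\]
As $[g,W] \subseteq [g, V/V_0]$, this yields $\rk([h,V]) \ge \rk([h,W])$, with equality precisely when $[g, V/V_0] = [g,W]$ and $[h,V] \cap V_0 = 0$.

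The main step is to show these two conditions hold together for at most one $h \in gZ$. The first depends only on $g$, so assume it holds. Because $Z$ acts trivially on $V_1$ and on $V/V_0$, each $z \in Z$ gives a well-defined $\F_p$-linear map $\varphi_z \colon V/V_1 \to V_0$ by $\varphi_z(\bar v) = (z-1)v$, and $z \mapsto \varphi_z$ is an injective homomorphism $Z \hookrightarrow \Hom_{\F_p}(V/V_1, V_0)$ because $V$ is a faithful $\F_p G$-module. Suppose $h_i = gz_i$ $(i=1,2)$ both satisfy $[h_i,V] \cap V_0 = 0$. Then each $(h_i-1)V$ projects isomorphically onto $[g,W]$ via $V_1 \twoheadrightarrow W$, so after fixing any $\F_p$-splitting of that projection, $(h_i-1)V$ is the graph of a linear section $\sigma_i \colon [g,W] \to V_0$. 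A direct computation gives
\[
(h_1-1)v - (h_2-1)v \;=\; z_1v - z_2v \;=\; \varphi_{z_1}(\tilde v) - \varphi_{z_2}(\tilde v) \;=\; \varphi_{z_1z_2^{-1}}(\tilde v),
\]
where $\tilde v$ is the class of $v$ in $V/V_1$; comparing with the graph description yields $(\sigma_1 - \sigma_2)\bigl((g-1)\bar v\bigr) = \varphi_{z_1z_2^{-1}}(\tilde v)$. Varying $v$ within a class modulo $V_1$ fixes the right side, while $(g-1)\bar v$ ranges over all of $[g,W]$; so by linearity $\sigma_1 = \sigma_2$ on $[g,W]$, hence $\varphi_{z_1z_2^{-1}} \equiv 0$, and injectivity forces $z_1 = z_2$.

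For the final statement, assume $p = 2$ and $|g| = 2$. Every $h = gz$ with $z \in \Omega_1(Z)$ is an involution since $(gz)^2 = z^2 = 1$, and $|\Omega_1(Z)| \ge 2$ as $Z$ is a nontrivial $2$-group. For any involution $h$ in characteristic $2$, $(h-1)^2 = 0$ yields $[h,V] \subseteq C_V(h)$, so $\scrj_V(h) = \rk([h,V])$ and $\scrj_W(h) = \rk([h,W])$. By the at-most-one statement, at most one of the $\ge 2$ involutions in $gZ$ gives rank equality, so at least one satisfies $\scrj_V(h) > \scrj_W(h)$. The main obstacle is the uniqueness step: two subspaces of $V_1$ transverse to $V_0$ with the same image in $W$ need not coincide, so observing merely that $(h_1-1) - (h_2-1)$ is a $V_0$-valued map vanishing on $V_1$ does not suffice; the rigidity comes from combining the condition $[g,V/V_0] = [g,W]$ with the faithfulness of the $Z$-action on $V$.
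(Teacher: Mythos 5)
Your treatment of the ``Furthermore'' clause is correct and rests on the same two ingredients the paper uses: the rank inequality $\rk([h,V])\ge\rk([h,V_1/V_0])$ obtained by projecting $[h,V]\le V_1$ onto $V_1/V_0$, and the faithfulness of the $Z$-action on $V$. But the paper's execution is noticeably shorter. From the two rank equalities it deduces directly that $[h_1,V]=[h_1,V_1]=[h_2,V]$ and $[h_1,V_1]\cap V_0=0$, then picks a single $a_0\in V$ with $[z,a_0]\ne0$ (where $z=h_1^{-1}h_2$); the identity $[h_2,a_0]=z[h_1,a_0]+[z,a_0]=[h_1,a_0]+[z,a_0]$ then forces $[z,a_0]\in[h_1,V_1]\cap V_0=0$, a contradiction. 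Your ``graph of a section'' framing reaches the same contradiction but requires fixing an auxiliary splitting of $V_1\to V_1/V_0$, verifying that $z\mapsto\varphi_z$ is an injective homomorphism, and the ``vary $v$ within its $V_1$-coset'' step --- all of which a single well-chosen $a_0$ renders unnecessary. Still, the logic is sound, and your final passage from the at-most-one statement to the $p=2$ conclusion (counting involutions in $g\Omega_1(Z)$ and using $[h,V]\le C_V(h)$) matches the paper.

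The serious problem is that you prove only the ``Furthermore'' clause and omit the main dichotomy (a)-or-(b) entirely. One must show that if (a) fails, then submodules $0\ne V_0<V_1<V$ exist with \emph{all} the listed properties: $G$ acts trivially on $V_0$ and $V/V_1$, $V_1/V_0$ is a simple $\F_p\4G$-module, $Z$ acts trivially on $V_1$ and on $V/V_0$, and $V_1$ and $V/V_0$ are indecomposable as $\F_p\4G$-modules. The paper does this by taking a composition series with a unique nontrivial factor, using coprime action ($[g,V_1]\cap V_0=0$ for $p'$-elements $g$, so $[Z,[g,V_1]]=0$, whence $[Z,V_1]=0$ since $V_1/V_0$ is simple), dualizing to obtain $[Z,V/V_0]=0$, and finally invoking the indecomposability of $V$: a decomposition $V_1=W_0\oplus W_1$ with $W_0\le V_0$ would give $[G,V]\le W_1$ and hence $W_0$ would split off $V$. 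None of these steps appears in your proposal, and they do not follow from your analysis of the ``Furthermore'' clause, so the lemma as stated remains unproven.
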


\begin{proof} Assume (a) does not hold. Thus all but one of the composition 
factors in $V$ have trivial $G$-action, and there are $\F_pG$-submodules 
$V_0<V_1\le V$ such that $V_1/V_0$ is simple (hence $Z$ acts trivially) 
and all composition factors of $V_0$ and of $V/V_1$ are trivial. Since 
$G=O^p(G)$ is generated by $p'$-elements, it acts trivially on $V_0$ and on 
$V/V_1$.

Let $W\le V_1$ be the submodule generated by the $[g,V_1]$ for all 
$p'$-elements $g\in G$. For each such $g$, $[g,V_1]\cap V_0\le[g,V_1]\cap 
C_{V_1}(g)=0$ since $g$ acts trivially on $V_0$, so projection onto 
$V_1/V_0$ sends $[g,V_1]$ injectively, and $Z$ acts trivially on $[g,V_1]$ 
since it acts trivially on $V_1/V_0$. Thus $[Z,W]=0$, and $V_1=W+V_0$ since 
$V_1/V_0$ is simple and $W\nleq V_0$. So $Z$ acts trivially on $V_1$. 

By a similar argument, $Z$ acts trivially on the dual $(V/V_0)^*$, and 
hence acts trivially on $V/V_0$. Since $Z$ acts nontrivially on $V$, we 
have $V_1<V$ and $V_0\ne0$.

Assume $V_1$ is not indecomposable. Thus $V_1=W_0\oplus W_1$, where $W_0$ 
and $W_1$ are nontrivial $\F_p\4G$-submodules of $V_1$ and $W_0\le V_0$. 
The action of $G$ on $V/W_1$ is trivial (an extension of $W_0$ by $V/V_1$), 
so $[G,V]\le W_1$, and $W_0$ splits off as a direct summand of $V$, 
contradicting the assumption that $V$ be indecomposable. Thus $V_1$ is 
indecomposable as an $\F_p\4G$-module, and a similar argument involving the 
dual module $V^*$ shows that $V/V_0$ is also indecomposable, finishing 
the proof of (b).

Now fix $g\in G\sminus Z$, and assume that $h_1,h_2\in gZ$ are distinct 
elements such that $\rk([h_i,V])=\rk([h_i,V_1/V_0])$ for $i=1,2$. Set 
$z=h_1^{-1}h_2\in Z^\#$. Since $G$ acts faithfully on $V$ by 
assumption, there is some $a_0\in V$ such that $[z,a_0]\ne0$. By (b), we 
have $a_0\notin V_1$ and $[z,a_0]\in V_0$. 

Set $h=h_1$ for short, so that $h_2=zh$. Then $[h,V_1/V_0]=[hz,V_1/V_0]$, 
so $\rk([h,V])=\rk([h,V_1/V_0])=\rk([hz,V])$, and hence 
$[h,V]=[h,V_1]=[hz,V]$ and $[h,V_1]\cap V_0=0$. In particular, $[h,a_0]$ 
and $[hz,a_0]$ are both in $[h,V_1]$. Also,
	\[ [hz,a_0] = z(h(a_0)-a_0)+(z(a_0)-a_0) = z([h,a_0])+[z,a_0], \]
so $0\ne[z,a_0]\in[h,V_1]\cap V_0$, a contradiction.

The last statement now follows since if $p=2$ and $|h|=2$, then 
$\scrj_V(h)=\rk([h,V])$ and $\scrj_{V_1/V_0}(h)=\rk([h,V_1/V_0])$. 
\end{proof}

The following example shows one way to construct examples of modules of the 
type described in Lemma \ref{l:pG-rep}(b).

\begin{Ex} \label{ex:pG-ext}
Fix a prime $p$, a finite group $G$ such that $O^{p}(G)=G$, and a subgroup 
$1\ne Z\le Z(G)$ of $p$-power order. Choose $k\ge1$ such that $Z$ has 
exponent at most $p^k$. Let $H<G$ be such that no nontrivial normal 
subgroup of $G$ is contained in $H$. Set $\5V=\Z/p^k(G/H)$: the free 
$\Z/p^k$-module with basis the set $G/H$ of left cosets. Regard $\5V$ as a 
left $\Z/p^kG$-module, set $V_2=C_Z(\5V)$, and let $V\le\5V$ be such that 
$V/V_2=C_{\5V/V_2}(G)$. Set $V_0=C_V(G)=C_{V_2}(G)$ and $V_1=[G,V_2]V_0$. 
Then $V$ is a $\Z/p^kG$-module on which $G$ acts faithfully. Also, $G$ acts 
trivially on $V_0$ and on $V/V_1$, and $Z$ acts trivially on $V_1$ and on 
$V/V_0$. 

If, furthermore, $V_1<V_2$ (equivalently, if $p\bmid|G/HZ|$), then there is 
a $\Z/p^kG$-submodule $V'<V$ such that $V'>V_1$, $G$ acts faithfully on 
$V'$, and $V'/V_1\cong V/V_2$. 
\end{Ex}

\begin{proof} Set 
	\[ \sigma_G = \sum_{gH\in G/H}gH \in C_{\5V}(G)=V_0 
	\qquad\textup{and}\qquad 
	\sigma_Z= \sum_{z\in Z}zH \in C_{\5V}(Z)=V_2. \] 
Note that $Z\cap H=1$ since it is normal in $G$ and contained in $H$. 

Since no nontrivial normal subgroup of $G$ is contained in $H$, the group 
$G$ acts faithfully on $\5V$ and $G/Z$ acts faithfully on $V_2$. So $G$ 
acts faithfully on $V$ if $Z$ does. 

Fix an element $1\ne z\in Z$; we will show that $[z,V]\ne0$. Let $Z_0<Z$ 
and $x\in Z\sminus Z_0$ be such that $Z=Z_0\times\gen{x}$ and $z\notin 
Z_0$, and set $p^\ell=|x|$ (thus $\ell\le k$). Choose $\lambda\in\Z/p^k$ of 
order $p^\ell$, let $g_1,\dots,g_m\in G$ be representatives for the left 
cosets of $HZ$ in $G$, and set 
	\[ v = \sum_{i=1}^m\,\sum_{t\in Z_0}\,\sum_{s=0}^{p^\ell-1} 
	s\lambda \cdot(tx^sg_iH) \in \5V. \]
Let $z_0\in Z_0$ and $0<r<p^\ell$ be such that $z=z_0x^r$. Then 
	\[ zv = \sum_{i=1}^m\,\sum_{t\in Z_0}\,\sum_{s=0}^{p^\ell-1} 
	s\lambda \cdot(tz_0x^{s+r}g_iH) = v - r\lambda\cdot \sigma_G, \]
and $[z,v]\ne0$ since $r\lambda\ne0$. 

For each $g\in G$, let $z_1,\dots,z_m\in Z_0$ and $r_1,\dots,r_m\in\Z$ 
be such that for each $i$, $gg_iH=z_jx^{r_j}g_jH$ for some $j$. Then 
	\[ gv = \sum_{i=1}^m\,\sum_{t\in Z_0}\,\sum_{s=0}^{p^\ell-1} 
	s\lambda \cdot(tx^sgg_iH) 
	= \sum_{j=1}^m\,\sum_{t\in Z_0}\,\sum_{s=0}^{p^\ell-1} 
	s\lambda \cdot(tz_jx^{s+r_j}g_jH) = v - \sum_{j=1}^m
	r_j\lambda\cdot g_j\sigma_Z, \]
and so $[g,v]\in C_{\5V}(Z)=V_2$. Thus $v\in V$, finishing the proof that 
$Z$ acts faithfully on $V$. 

Since $[Z,[G,V]]=1$ by definition and $[Z,G]=1$, we have $[G,[Z,V]]=1$ by 
the three-subgroup lemma (see \cite[Theorem 2.2.3]{Gorenstein}). Hence 
$[Z,V]\le V_0$, so $Z$ acts trivially on $V/V_0$. 

If $V_1<V_2$, then $G$ acts trivially on $V_2/V_1$ and on $V/V_2$, and 
hence acts trivially on $V/V_1$ (recall $G$ is generated by $p'$-elements). 
So $V/V_1=(V_2/V_1)\times(V'/V_1)$ for some $\Z/p^kG$-submodule $V'<V$ 
containing $V_1$ with $V'/V_1\cong V/V_2$. Also, $Z$ acts faithfully on 
$V'$ since it acts faithfully on $V=V'+V_2$ and trivially on $V_2$, so $G$ 
acts faithfully on $V'$ since $G/Z$ acts faithfully on $[G,V_2]\le 
V_1=V'\cap V_2$.
\end{proof}

For example, when $p=2$, $G=2M_{12}$, $Z=Z(G)\cong C_2$, and $H\cong 
M_{11}$, then by Example \ref{ex:pG-ext}, there is a 
$12$-dimensional faithful $\F_2G$-module $V$ with submodules $V_0<V_1<V$, 
where $\dim(V_0)=1$, $\dim(V_1)=11$, $Z$ acts trivially on $V_1$ and on 
$V/V_0$, and where $V_1$ has index two in the $12$-dimensional permutation 
module for $G/Z\cong M_{12}$. 

There are much more general ways to construct faithful $\Z/p^kG$-modules 
$V$ with $V_0<V_1<V$ as in Lemma \ref{l:pG-rep}, starting with a given 
$\Z/p^k\4G$-module $V_1$ ($\4G=G/Z$). But the ones we have found all seem 
to require certain conditions on $H^2(\4G;V_1)$ to hold.

We end the section with the following, more technical lemma needed in 
Section \ref{s:A<|F}.

\begin{Lem} \label{l:C(A/A0)G}
Let $A$ be a finite abelian group, and fix $\alpha\in\Aut(A)$. Let $A_0\le 
A$ be such that $\alpha(A_0)=A_0$. Then 
$|C_{A/A_0}(\alpha)|\le|C_A(\alpha)|$. 
\end{Lem}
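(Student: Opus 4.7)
The plan is to use the standard commutator map $\varphi\in\End(A)$ defined by $\varphi(a)=[\alpha,a]=\alpha(a)a^{-1}$, which is a group homomorphism precisely because $A$ is abelian. By definition, $\Ker(\varphi)=C_A(\alpha)$ and $\Im(\varphi)=[\alpha,A]$. Moreover, since $\alpha$ preserves $A_0$, the subgroup $\varphi^{-1}(A_0)$ is well-defined, and an element $aA_0$ of $A/A_0$ is fixed by $\alpha$ if and only if $\varphi(a)\in A_0$. Thus $C_{A/A_0}(\alpha)=\varphi^{-1}(A_0)/A_0$.

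The key step is then to compute $|\varphi^{-1}(A_0)|$ by applying the first isomorphism theorem to the restriction $\varphi|_{\varphi^{-1}(A_0)}\colon\varphi^{-1}(A_0)\to A_0$. Its kernel is $C_A(\alpha)$ and its image is $A_0\cap[\alpha,A]$, whence
\[
|\varphi^{-1}(A_0)| = |C_A(\alpha)|\cdot|A_0\cap[\alpha,A]|.
\]
Dividing by $|A_0|$ gives
\[
|C_{A/A_0}(\alpha)| = |C_A(\alpha)|\cdot\frac{|A_0\cap[\alpha,A]|}{|A_0|} \le |C_A(\alpha)|,
\]
since $A_0\cap[\alpha,A]\le A_0$. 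There is no real obstacle here; this is just a bookkeeping argument that was already implicit in the computation \eqref{e:B3} appearing in the proof of Proposition \ref{p:not.str.cl.3b}, so the lemma can be packaged in a few lines.
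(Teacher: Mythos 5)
Your proof is correct, but it takes a genuinely different route from the paper. The paper deduces the inequality from the long exact sequence in group cohomology attached to $0\to A_0\to A\to A/A_0\to 0$ with $G=\gen{\alpha}$: one has
\[
0 \too C_{A_0}(G) \too C_A(G) \too C_{A/A_0}(G) \too H^1(G;A_0) \too \cdots,
\]
so $|C_A(G)|\ge |C_{A/A_0}(G)|\cdot|C_{A_0}(G)|/|H^1(G;A_0)|$, and then invokes the periodicity of cohomology of finite cyclic groups (the Herbrand quotient of a finite module is trivial) together with the fact that $H^2(G;A_0)$ is a quotient of $C_{A_0}(G)$ to conclude $|H^1(G;A_0)|\le|C_{A_0}(G)|$. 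Your argument avoids cohomology entirely: you apply the first isomorphism theorem to the restriction of the endomorphism $\varphi=\alpha-1$ to $\varphi^{-1}(A_0)$, noting $\Ker(\varphi)=C_A(\alpha)\le\varphi^{-1}(A_0)$ and $\varphi(\varphi^{-1}(A_0))=A_0\cap[\alpha,A]$, to obtain the exact identity $|C_{A/A_0}(\alpha)|=|C_A(\alpha)|\cdot|A_0\cap[\alpha,A]|/|A_0|$, from which the inequality is immediate. This is shorter, more elementary, and proves slightly more (an equality, not just a bound); the paper's cohomological packaging is tidier notationally but draws on heavier machinery. One small caution on your closing remark: the computation \eqref{e:B3} in Proposition \ref{p:not.str.cl.3b} is carried out under the hypothesis $A_*\le C_A(\tau)$, whereas Lemma \ref{l:C(A/A0)G} assumes only that $A_0$ is $\alpha$-invariant; your argument correctly does not need the stronger hypothesis (only its first equality of \eqref{e:B3} carries over verbatim), but it is worth saying so explicitly rather than treating the lemma as "already implicit" in that display.
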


\begin{proof} Set $G=\gen{\alpha}\le\Aut(A)$. The short exact sequence 
$0\to A_0\too A\too A/A_0\to0$ induces an exact sequence in cohomology
	\[ 0 \Right2{} C_{A_0}(G) \Right3{} C_A(G) \Right3{} C_{A/A_0}(G) 
	\Right3{} H^1(G;A_0) \Right3{} \dots, \]
and hence 
	\[ |C_A(G)| \ge |C_{A/A_0}(G)|\cdot 
	|C_{A_0}(G)| \big/ |H^1(G;A_0)|. \]
Since $G=\gen{\alpha}$ and $A_0$ is finite, we have 
$|H^1(G;A_0)|=|H^2(G;A_0)|$ where $H^2(G;A_0)$ is a quotient group of 
$C_{A_0}(G)$ (see \cite[Theorem 6.2.2]{Weibel}). So 
$|C_A(G)|\ge|C_{A/A_0}(G)|$. 
\end{proof}


\section{The Golay modules for \texorpdfstring{$M_{22}$ and $M_{23}$}
{M22 and M23}}
\label{s:Todd-F2}

We now apply results in Section 2 to prove that the Golay modules (i.e., 
dual Todd modules) for $M_{22}$ and $M_{23}$ are not fusion realizable in 
the sense of Definition \ref{d:realize}. We do this by showing that 
$\RR[+]{T}{A}=\emptyset$ (see Definition \ref{d:K&R}) whenever 
$T\in\syl2{M_n}$ ($n=22$ or $23$) and $A$ is the Golay module of $M_n$. 

We first set up our notation for handling these groups and modules. The 
notation used here for doing that is based mostly on that used by Griess 
\cite[Chapter 4--5]{Griess}. 

For a finite set $I$ and a field $K$, let $K^I$ be the 
vector space of maps $I\too K$, with canonical basis
$\{e_i\,|\,i\in I\}$. Let 
	\[ \Perm_I(K) \le \Mon^*_I(K) \le \Aut^*(K^I) \]
be the groups of permutation automorphisms, semilinear monomial 
automorphisms, and all semilinear automorphisms, respectively (i.e., linear 
with respect to some field automorphism of $K$). Thus if $|I|=n$, then 
$\Perm_I(K)\cong\Sigma_n$ and $\Mon^*_I(K)\cong 
(K^\times)^n\rtimes(\Sigma_n\times\Aut(K))$. Let 
	\[ \pi = \pi_{I,K} \: \Mon^*_I(K) \Right4{} \Perm_I(K) \]
be the canonical projection that sends a monomial automorphism to the 
corresponding permutation automorphism; thus $\Ker(\pi_{I,K})$ is the group of 
semilinear automorphisms that send each $Ke_i$ to itself.

More concretely, set 
	\[ I=\{1,2,3,4,5,6\} \qquad\textup{and}\qquad \Omega=\F_4\times I. 
	\]
Thus $\F_2^\Omega$ and $\F_4^I$ are the vector spaces of functions 
$\Omega\too\F_2$ and $I\too\F_4$, respectively. We also identify $\F_4^I$ 
with the space of 6-tuples in $\F_4$. Fix $\omega\in\F_4\sminus\F_2$, and 
let $(x\mapsto\4x)$ be the field automorphism of $\F_4$ of order $2$. Thus 
$\F_4=\{0,1,\omega,\4\omega\}$, and $\4x=x^2$ for $x\in\F_4$. 

Let $\scrh\subseteq\F_4^I$ be the hexacode subgroup: 
	\beqq \scrh=\Gen{(\omega,\4\omega,\omega,\4\omega,\omega,\4\omega), 
	(\4\omega,\omega,\4\omega,\omega,\omega,\4\omega), 
	(\4\omega,\omega,\omega,\4\omega,\4\omega,\omega), 
	(\omega,\4\omega,\4\omega,\omega,\4\omega,\omega) }_{\F_4} . 
	\label{e:H-gens} \eeqq
Thus $\scrh$ is a 3-dimensional $\F_4$-linear subspace of $\F_4^I$. When 
making computations, we will frequently refer to the following elements in 
$\scrh$:
	\beqq h_1=(1,1,1,1,0,0), \qquad h_2=(1,1,0,0,1,1), \qquad 
	h_3 = (\omega,\4\omega,1,0,1,0). \label{e:h1-h3} \eeqq

\begin{Not} \label{not:Gamma-action}
Let the group $\Gamma\defeq\F_4^I\rtimes\Mon_I^*(\F_4)$ act on 
$\Omega=\F_4\times I$ in the 
usual way: $\F_4^I$ acts via translation, $(\F_4^\times)^I$ acts via 
multiplication in each coordinate, $\Perm_I(\F_4)$ permutes the 
coordinates, and $\phi\in\Aut(\F_4)$ sends $(c,i)$ to $(\4c,i)$. This in 
turn induces an action on $\F_2^\Omega$, where $g\in\Gamma$ sends an 
element $e_{(c,i)}$ to $e_{g(c,i)}$. Equivalently, for 
$\xi\in\F_2^\Omega$ and $(c,i)\in\Omega$, define $g(\xi)$ by 
$(g(\xi))(c,i)=\xi(g^{-1}(c,i))$. 

As special cases, $\trs\eta\in\Aut(\F_2^\Omega)$ will denote translation by 
$\eta\in\F_4^I$, and $\ttt(\alpha)\in\Aut(\F_2^\Omega)$ will be the 
automorphism induced by $\alpha\in\Mon_I^*(\F_4)$. Thus 
	\[ \trs\eta(\xi)(c,i)=\xi(c-\eta(i),i) \qquad\textup{and}\qquad 
	\ttt(\alpha)(\xi)(c,i)=\xi(\alpha^{-1}(c,i)). \]
\end{Not}

Now set
	\[ \Aut^*(\scrh) \defeq 
	\bigl\{\alpha\in\Mon^*_I(\F_4) \,\big|\, 
	\alpha(\scrh)=\scrh\bigr\}. \]
By \cite[Proposition 4.5.ii]{Griess}, $\Aut^*(\scrh)\cong3\Sigma_6$. In 
other words, each permutation of $I$ is the image of some automorphism of 
$\scrh$, unique up to multiplication by $u\cdot\Id$ for some $u\in\F_4^\times$. 
More explicitly, $\Aut^*(\scrh)$ is generated by the subgroup 
	\[ \Aut^*_0(\scrh) = \Gen{(1\,2)(3\,4),(1\,2)(5\,6),(1\,3\,5)(2\,4\,6), 
	(1\,3)(2\,4), (1\,2)(3\,4)(5\,6)\phi} \cong \Sigma_4\times C_2, \]
where $\phi$ is the field automorphism 
$\phi(x_1,\dots,x_6)=(\4{x_1},\dots,\4{x_6})$, together with the elements 
	\[ \omega\cdot\Id \qquad\textup{and}\qquad
	\alpha=(1\,2\,3)\cdot\diag(1,1,1,1,\4\omega,\omega). \]

We refer to \cite[Definition 5.15]{Griess} for a definition of the Golay 
code $\scrg\le\F_2^\Omega$. Here, rather than repeat that definition, we 
give a set of generators. Define $\Grf\:\F_4^I\too\F_2^\Omega$ by setting 
	\[ \Grf(\xi) = \sum\nolimits_{i\in I} e_{(\xi(i),i)} \]
(the ``graph'' of $\xi$). Define elements in $\F_2^\Omega$: 
	\[ C_i = \sum_{c\in\F_4} e_{(c,i)} \quad \textup{(for $i\in I$)} 
	\qquad\textup{and}\qquad
	\grf{h} = \Grf(h)+\Grf(0) \quad \textup{(for $h\in\F_4^I$),} \]
and also $C_{ij}=C_i+C_j$ for distinct $i,j\in I$ and 
$C_{1234}=C_{12}+C_{34}$. Then $C_i+\Grf(0)$ and $\grf{h}$ are in $\scrg$ 
for all $i\in I$ and all $h\in\scrh$. From the ``standard basis'' for 
$\scrg$ given in \cite[5.35]{Griess}, we see that 
	\[ \scrg = \Gen{C_i+\Grf(h) \,\big|\, i\in I,~ h\in\scrh} = 
	\Gen{C_i+\Grf(0),\grf{h}, \,\big|\, i\in I,~ h\in\scrh} . \]
This is a 12-dimensional subspace of $\F_2^\Omega$, with basis consisting 
of the six elements $C_i+\Grf(0)$ for $i\in I$, together with six elements 
$\grf{h}$ for $h$ in any given $\F_2$-basis of $\scrh$. 
By \cite[Theorem 5.8]{Griess}, the weight of each element in $\scrg$ is 
$0$, $8$, $12$, $16$, or $24$.

Define $\Mat24$ to be the group of permutations of 
$\Omega$ that preserve $\scrg$, and set $\dTodd24=\scrg/\gen{e_\Omega}$: 
its Golay module. Also, define 
	\[ \Delta_1=\{(0,6)\} \qquad\textup{and}\qquad 
	\Delta_2=\{(0,6),(1,6)\}, \]
and for $i=1,2$ set 
	\[ \Mat{24}{-i} = C_{\Mat24}(\Delta_i) \qquad\textup{and}\qquad
	\dTodd{24}{-i} = \bigl\{ \xi\in\scrg \,\big|\, 
	\supp(\xi)\cap\Delta_i=\emptyset \bigr\}. \]
Thus $\dim(\dTodd24)=\dim(\dTodd23)=11$, while $\dim(\dTodd22)=10$.

Define permutations 
$\ttt_{ij},\trs{h}\in\Sigma_\Omega$ for $i\ne j$ in $I$ and $h\in\F_4^I$ by 
letting $\ttt_{ij}$ exchange the $i$-th and $j$-th columns and letting 
$\trs{h}$ be translation by $h$. More precisely,
	\[ \ttt_{ij}(c,k) = (c,\sigma(k))~ \textup{where}~ 
	\sigma=(i\,j)\in\Sigma_6
	\qquad\textup{and}\qquad
	\trs{h}(c,i)=(c+h(i),i). \]
Then $\trs{h}\in\Mat24$ for all $h\in\scrh$. By the above description of 
$\Aut^*_0(\scrh)\le\Aut^*(\scrh)$, the elements $\ttt_{12}\ttt_{34}$, 
$\ttt_{12}\ttt_{56}$, and $\ttt_{13}\ttt_{24}$ all lie in $\Mat24$.

\begin{Not} \label{n:M22-23}
Fix $n=22$ or $23$. Set $\Gamma=\Mat{}n$, and define subgroups 
	\begin{align*} 
	T &= \Gen{\trsh1, \trsh[\omega]1, \trsh3, \trsh[\omega]3, 
	\ttt_{12}\ttt_{34}, \ttt_{13}\ttt_{24}, 
	\ttt_{12}\phi} \in \syl2\Gamma \\
	H_1 &= \gen{\trsh1,\trsh[\omega]1,\ttt_{12}\ttt_{34}, 
	\ttt_{13}\ttt_{24}} \\ 
	H_2 &= \gen{\trsh1,\trsh[\omega]1, \trsh3, \trsh[\omega]3 } .
	\end{align*}
\end{Not}

In the next lemma, we list the basic properties of these subgroups 
that will be needed.

\begin{Lem} \label{l:hexad.grp}
Assume Notation \ref{n:M22-23}, with $n=22$ or $23$. Then $H_1$ and $H_2$ 
are the only subgroups of $T$ isomorphic to $E_{16}$. If we set 
$A=\dTodd{}n$, then 
	\begin{align*} 
	[\trsh1,A]&=\gen{C_{12},C_{13},C_{14},\grfh1}\cong E_{16}, \\
	C_A(H_1)&=C_A(T)=\gen{C_{1234}}, \\ 
	C_A(H_2)&=\gen{C_{12},C_{13},C_{14},C_{15}}\cong E_{16}. 
	\end{align*}
\end{Lem}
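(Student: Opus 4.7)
The statement has four parts, each reducible to concrete computations using the action formulas $\trs\eta(e_{(c,i)})=e_{(c+\eta(i),i)}$ and $\ttt_{ij}(e_{(c,k)})=e_{(c,(ij)(k))}$.

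\textbf{Isomorphism types of $H_1, H_2$ and uniqueness.} The group of translations by $\scrh$ is elementary abelian, and $H_2$ corresponds to the $\F_2$-span of $h_1,\omega h_1,h_3,\omega h_3$, which has $\F_2$-dimension $4$ since $h_1,h_3$ are $\F_4$-independent; so $H_2\cong E_{16}$. For $H_1$ the translations commute, the permutations $(1\,2)(3\,4)$ and $(1\,3)(2\,4)$ commute in $V_4\le\Sigma_4$, and a column permutation $\ttt_\sigma$ commutes with $\trs\eta$ iff $\sigma\cdot\eta=\eta$; both permutations fix $h_1=(1,1,1,1,0,0)$. For uniqueness, the first step is to verify that $Z(T)=\gen{\trsh1}$ using the conjugation rules $\ttt_\sigma\trs\eta\ttt_\sigma^{-1}=\trs{\sigma\eta}$ and $\phi\trs\eta\phi^{-1}=\trs{\phi\eta}$: $\trsh1$ is fixed by every generator of $T$, whereas $\trsh[\omega]1$ is moved by $\ttt_{12}\phi$ and $\trsh3$ by $\ttt_{12}\ttt_{34}$, and no nontranslation coset in $T/H_2$ contributes a central element. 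Any $H\le T$ with $H\cong E_{16}$ must then contain $Z(T)$, or else $HZ(T)\cong E_{32}$ would contradict $\rk_2(\Gamma)=4$. The remaining, and main, obstacle is to classify the elementary abelian subgroups of order $8$ in $T/Z(T)$ (of order $64$) and determine which lift to $E_{16}$'s: a case analysis of the action of $T/H_2$ on $H_2/Z(T)\cong E_8$, combined with squaring relations ruling out noncentral order-$4$ lifts, leaves exactly $H_1$ and $H_2$.

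\textbf{The commutator $[\trsh1, A]$.} Since $\trsh1$ fixes each $C_i$, one has $[\trsh1,C_i+\Grf(0)]=\Grf(h_1)+\Grf(0)=\grfh1$. For $h\in\scrh$, direct expansion gives
\[
[\trsh1,\grf h]=\Grf(h+h_1)+\Grf(h_1)+\Grf(h)+\Grf(0)=\sum_{i=1}^{4}h_\omega(i)\,C_i,
\]
where $h=h_0+\omega h_\omega$ with $h_0,h_\omega\in\F_2^I$ (the sum runs over $i=1,\ldots,4$ only, since $h_1$ vanishes on columns $5,6$). The map $h\mapsto h_\omega|_{\{1,\ldots,4\}}$ is $\F_2$-linear on $\scrh$, and evaluation on the basis $h_1,\omega h_1,h_2,\omega h_2,h_3,\omega h_3$ shows its image to be the $3$-dimensional even-weight subspace of $\F_2^{\{1,\ldots,4\}}$, giving $\gen{C_{12},C_{13},C_{14}}$. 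Adding $\grfh1$, which is not supported on whole columns and therefore lies outside this span, yields $[\trsh1,A]\le\gen{\grfh1,C_{12},C_{13},C_{14}}$. For equality, since $\trsh1$ is an involution in characteristic $2$, $\dim[\trsh1,A]=\dim A-\dim C_A(\trsh1)$; a direct inspection of $C_\scrg(\trsh1)$ (of dimension $8$) and of the restrictions of the functionals defining $A\le\scrg$ to this subspace (using, for the $n=22$ case, that $\grf{\omega h_2+h_3}$ is $\trsh1$-fixed with distinguishing values at $(0,6)$ and $(1,6)$) produces $\dim[\trsh1,A]=4$ in both cases. Explicit preimages like $\grf{h_3}\mapsto C_{12}$, $\grf{\omega h_3}\mapsto C_{13}$, and $\grf{\omega h_1+\4\omega h_3}\mapsto C_{14}$ confirm the reverse inclusion.

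\textbf{The centralizers $C_A(H_1), C_A(T), C_A(H_2)$.} An element of $\F_2^\Omega$ is fixed by $\trs\eta$ iff, in each column $i$, it is invariant under addition by $\eta(i)\in\F_4$. For $H_1$, the two translations force constancy on each of columns $1,2,3,4$, and the two permutations force these four constants to agree; what remains on columns $5,6$ is constrained by membership in $\scrg$ (weights in $\{0,8,12,16,24\}$) and by the support conditions defining $A$, leaving only $0$ and $C_{1234}$, so $C_A(H_1)=\gen{C_{1234}}$. Since $C_{1234}$ is fixed by the remaining generators of $T$ (translations fix every $C_i$, and $\ttt_{12}\phi$ preserves $\{1,2,3,4\}$ setwise while fixing $0\in\F_4$), also $C_A(T)=\gen{C_{1234}}$. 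For $C_A(H_2)$, $H_2$ corresponds to translations by $W:=\gen{h_1,h_3}_{\F_4}$; since $W(i)=\F_4$ for $i=1,\ldots,5$ and $W(6)=0$, an $H_2$-fixed element of $\scrg$ is constant on each of columns $1$--$5$ and free on column $6$, and the support conditions for $A$ kill any nonzero column-$6$ contribution, leaving exactly $\gen{C_{12},C_{13},C_{14},C_{15}}$.
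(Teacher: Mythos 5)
Your computations for the commutator and the three centralizers are correct and, in spirit, amount to the same verification the paper carries out — the paper simply records the key commutators in Tables \ref{tbl:[x,a]} and \ref{tbl:[ch1,x]} and reads off the answers, whereas you derive them directly from the action on $\scrg$ (the formula $[\trsh1,\grf h]=\sum_{i=1}^4 h_\omega(i)\,C_i$, the constancy of $H_1$- and $H_2$-fixed vectors on certain columns, and the weight spectrum of the Golay code). The ``explicit preimages'' sentence is unnecessary: your dimension count $\dim[\trsh1,A]=4$ together with $[\trsh1,A]\le\gen{C_{12},C_{13},C_{14},\grfh1}$ already forces equality, and in fact those preimages do not by themselves give $\grfh1$, only $C_{12},C_{13},C_{14}$; the dimension argument is what actually closes that loop.

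Where you genuinely diverge from the paper is the uniqueness of $H_1$ and $H_2$. The paper uses the much more economical observation that $T/H_1\cong D_8$ together with $\rk(C_{H_1}(x))=2$ for $x\in T\sminus H_1$, which immediately forces $|H\cap H_1|=4$ and $HH_1/H_1$ to be one of the two $E_4$'s in $D_8$, and reduces the problem to two small cosets. You instead compute $Z(T)=\gen{\trsh1}$, force $Z(T)\le H$ via $\rk_2(\Gamma)=4$, and propose to classify the $E_8$'s in the order-$64$ group $T/Z(T)$ and determine which lift. That is a valid strategy but significantly more laborious, and you do not actually perform the case analysis — you only assert its outcome. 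Since this is the hardest part of the lemma (the commutator and centralizer facts really are routine once the code action is set up), that missing case analysis is the one substantive gap. If you want to keep your framework, at minimum you should reduce further (e.g., via the $D_8$-quotient as the paper does) before the enumeration, because as stated the classification of $E_8$'s in a group of order $64$ plus the lifting analysis is a nontrivial amount of unwritten work.

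Two small points of precision. First, in the $C_A(H_1)$ step you should note explicitly that $C_\scrg(H_1)=\gen{C_{1234},C_{56}}$ (the column-$5,6$ part is forced to $0$ or $C_{56}$ by the minimum weight $8$ of $\scrg$), and only then does the support condition at $(0,6)$ kill the $C_{56}$-part. Second, in the $C_A(H_2)$ step, the phrase ``the support conditions for $A$ kill any nonzero column-$6$ contribution'' is slightly misleading: vanishing at one or two points of column $6$ alone does not kill the column; it is the combination of $\eta(0,6)=0$ (so $|\eta|\le 3$) with the Golay weight constraint that forces $\eta=0$, after which $\xi\in\scrg\cap\gen{C_1,\dots,C_5}$ gives the even-weight space.
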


\begin{proof} The first statement is well known and easily checked. Note, 
for example, that $T/H_1\cong D_8$, and that $C_{H_1}(x)$ has rank $2$ for 
$x\in T\sminus H_1$. So if $E_{16}\cong H\le T$ and $H\ne H_1$, then 
$HH_1=H_1\gen{\trsh3,\trsh[\omega]3}$ or 
$H_1\gen{\trsh3,\ttt_{12}\phi}$, and from this one easily reduces to 
the case $H=H_2$. (Note that all elements of order $2$ in $H_1H_2$ lie in 
$H_1\cup H_2$.) 

The statements about commutators and centralizers follow from Tables 
\ref{tbl:[x,a]} and \ref{tbl:[ch1,x]}. 
\end{proof}

\begin{Table}[ht]
\[ \renewcommand{\arraystretch}{1.2} \addtolength{\arraycolsep}{1mm}
\begin{array}{r|ccccccc} 
x & C_{1234} & C_{12} & C_{13} & C_{15} & \grfh1 & \grfh2+C_{56} & 
\grf{h_3+\omega h_2}+C_{56} \\\hline
[\trsh1,x] & 0 & 0 & 0 & 0 & 0 & 0 & 0 \\ 
{}[\trsh[\omega]1,x] & 0 & 0 & 0 & 0 & C_{1234} & C_{12} & C_{23} \\ 
{}[\trsh3,x] & 0 & 0 & 0 & 0 & C_{12} & C_{12} & C_{25} \\
{}[\trsh[\omega]3,x] & 0 & 0 & 0 & 0 & C_{13} & C_{15} & C_{35} \\
{}[\ttt_{12}\ttt_{34},x] & 0 & 0 & C_{1234} & C_{12} & 0 & 0 & \grfh1 \\
{}[\ttt_{13}\ttt_{24},x] & 0 & C_{1234} & 0 & C_{13} & 0 & \grfh1 & \grfh1 \\
{}[\ttt_{12}\phi,x] & 0 & 0 & C_{12} & C_{12} & 0 & 0 & \grfh2+C_{56} 
\end{array} \]
\caption{Table of commutators $[g,x]=g(x)-x$ for $g\in T$ and 
$x\in\dTodd23$. The first six elements in the top row form a 
basis for $C_{\dTodd22}(\trsh1)$, and together with the seventh they form a 
basis for $C_{\dTodd23}(\trsh1)$.} 
\label{tbl:[x,a]} 
\end{Table}

\begin{Table}[ht]
\[ \renewcommand{\arraystretch}{1.5}
\begin{array}{r|cccc} 
x & \grfh[\omega]1 & \grfh3 & \grfh[\omega]3 & \Grf(\omega h_2)+C_1 
\\\hline
[\trsh1,x] & C_{1234} & C_{12} & C_{13} & \grfh1+C_{12} 
\end{array}
\]
\caption{The classes of these four elements $x$ form a basis for 
$\dTodd{}n/C_{\dTodd{}n}(\trsh1)$.} 
\label{tbl:[ch1,x]}
\end{Table}


\section{The 6-dimensional module for \texorpdfstring{$3M_{22}$}{3M22}}
\label{s:3M22}

\newcommand{\hh}{\mathfrak{h}}
\renewcommand{\3}{\textbf{\underline3}}

We again fix an element $\omega\in\F_4\sminus\F_2$, and let $(a\mapsto\4a)$ 
denote the field automorphism of $\F_4$. Thus $\F_4=\{0,1,\omega,\4\omega\}$. 
We also use the bar over matrices to denote the field automorphism applied 
to the entries; i.e., $\4{(a_{ij})}=(\4{a_{ij}})$. Let $\Tr\:\F_4\too\F_2$ be 
the trace: $\Tr(a)=a+\4a$.

Set $V=\F_4^3$ and $A=\F_4^6$, where elements of $V$ are written as column 
matrices $\colthree{a}bc$ for $a,b,c\in\F_4$, and elements of $A$ are 
written as column matrices $\coltwo{u}v$ for $u,v\in V$. Let 
$\gen{-,-}$ be the hermitian form on $A$ defined by 
	\[ \left\langle \Coltwo{u}v, \Coltwo{x}y \right\rangle 
	= \Tr(u^t\4y+v^t\4x). \]

The description here of the action of $\Gamma=3M_{22}$ on $A$ is based on 
that in \cite[Chapter 2]{bensonthesis} and in \cite[p. 39]{atlas}, 
originally due to Benson and others. An element denoted 
\fbox{$\begin{smallmatrix}r&s&t\\x&y&z\end{smallmatrix}$} in 
\cite{bensonthesis} or \texttt{(rx~sy~tz)} in \cite{atlas} is written here 
$\Coltwo{u}v$ where $u=\colthree{r}st$ and $v=\colthree{r+x}{s+y}{t+z}$.

For $i,k=1,2,3$ and $j=1,2$, define 
	\[ b_{ijk} = \begin{cases} 
	\omega^j & \textup{if $i=k$} \\
	1 & \textup{if $i\ne k$,}
	\end{cases} \qquad\textup{and}\qquad
	b_{ij} = \Colthree{b_{ij1}}{b_{ij2}}{b_{ij3}}\in V; \]
and set $\scrb=\{\gen{b_{ij}}\,|\,i=1,2,3,~j=1,2\}$. The following lemma 
is easily checked.

\begin{Lem} \label{l:B&U}
Consider the hermitian form $\hh\:V\times V\too\F_4$ defined by 
$\hh(v,w)=\4v^tw$. Define elements $u_1,\dots,u_6\in V$ by setting 
	\[ u_1=\colthree100, \quad u_2=\colthree010,\quad 
	u_3=\colthree001,\quad u_4=\colthree111,\quad 
	u_5=\colthree1\omega{\4\omega},\quad 
	u_6=\colthree1{\4\omega}\omega, 
	\]
and set $\scru=\{\gen{u_i}\,|\,1\le i\le6\}$. Then the members of $\scru$ 
are the only $1$-dimensional subspaces of $V$ not orthogonal to any member 
of $\scrb$, and the members of $\scrb$ are the only $1$-dimensional 
subspaces of $V$ not orthogonal to any member of $\scru$. Hence for 
$D\in\GL_3(4)$, the action of $D$ on $V$ permutes the members of $\scru$ if 
and only if the action of $\4D^t$ on $V$ permutes the members of $\scrb$.
\end{Lem}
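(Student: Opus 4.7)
The plan is to first establish the two orthogonality characterizations (one per direction), essentially a finite check on the 21 one-dimensional subspaces of $V$, and then deduce the $\GL_3(4)$ statement by a clean adjointness argument for $\hh$.

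First I would enumerate the 21 lines of $V$ according to the number of nonzero coordinates: there are $3$ coordinate lines ($\langle u_1\rangle,\langle u_2\rangle,\langle u_3\rangle$), $9$ ``biaxial'' lines with exactly two nonzero entries, and $9$ ``triaxial'' lines $\langle (1,x,y)\rangle$ with $x,y\in\F_4^\times$. A short check shows that these $9$ triaxial lines are partitioned into $\{u_4,u_5,u_6\}\subset\scru$ and the six lines of $\scrb$ (after suitable rescaling of the $b_{ij}$). Thus $\scru$ (respectively $\scrb$) is a concrete subset of the lines of $V$.

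Next I would verify (by direct computation, simplified by the obvious $\Sigma_3$-symmetry permuting the three coordinates, which stabilizes both $\scru$ and $\scrb$) that $\hh(u_i,b_{jk})\ne0$ for all $i\in\{1,\dots,6\}$ and all $(j,k)\in\{1,2,3\}\times\{1,2\}$; this is only a handful of pairings up to symmetry. Then I would show that every line \emph{not} in $\scru$ is orthogonal to some $b\in\scrb$: a biaxial line $\langle(a,b,0)\rangle$ is handled by choosing $b_{jk}$ among $\{b_{1\cdot},b_{2\cdot}\}$ so as to force $\bar a\alpha+\bar b\beta=0$ for some unit $\alpha,\beta\in\{1,\omega,\4\omega\}$, and each $b_{jk}\in\scrb$ itself is orthogonal to some other $b_{j'k'}\in\scrb$ (for example $\hh(b_{11},b_{21})=\4\omega+\omega+1=0$). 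Symmetrically, every line not in $\scrb$ is orthogonal to some $u\in\scru$: coordinate and biaxial lines with zero in the $i$-th slot are orthogonal to $\langle u_i\rangle$, and $u_4,u_5,u_6$ form an orthogonal triple under $\hh$ (using $1+\omega+\4\omega=0$). Together, these computations establish the two characterizations in the lemma.

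For the final assertion, the key identity is purely formal: for any $D\in\GL_3(4)$ and any $v,w\in V$,
\[
\hh(Dv,w)=\overline{Dv}^t w=\bar v^t\,\4D^t w=\hh(v,\4D^t w),
\]
so $Dv\perp w\iff v\perp\4D^t w$. Assume $\4D^t$ permutes $\scrb$. Then for any $u\in\scru$ and $b\in\scrb$, $\hh(Du,b)=\hh(u,\4D^t b)\ne0$ by the first characterization, so $Du$ is orthogonal to no element of $\scrb$, whence $Du\in\scru$. Conversely, if $D$ permutes $\scru$, then for $b\in\scrb$ and $u\in\scru$, $\hh(u,\4D^t b)=\hh(Du,b)\ne0$, so $\4D^t b$ is orthogonal to no element of $\scru$, and by the second characterization $\4D^t b\in\scrb$.

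The main obstacle is bookkeeping: although each individual orthogonality computation is trivial, one must cover all $15$ lines outside $\scru$ (for one direction) and all $15$ lines outside $\scrb$ (for the other) without redundancy. Using the $\Sigma_3$-coordinate action and the field automorphism $(\cdot)\mapsto\overline{(\cdot)}$ (which swaps the pairs $b_{i1}\leftrightarrow b_{i2}$ and fixes $\{u_1,u_2,u_3\}$ while swapping $u_5\leftrightarrow u_6$) reduces the verification to a handful of representative cases, after which the abstract duality step above is immediate.
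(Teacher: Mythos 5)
The paper gives no proof at all for this lemma, stating only ``The following lemma is easily checked,'' so your verification supplies exactly the routine finite check the authors had in mind. Your two orthogonality characterizations are correct (I verified the representative pairings, including the nontrivial ones $\hh(b_{11},b_{21})=\bar\omega+\omega+1=0$ and the mutual orthogonality of $u_4,u_5,u_6$), your observation that $\scru\cup\scrb$ accounts for exactly the nine ``triaxial'' lines is right, and the closing adjointness identity $\hh(Dv,w)=\hh(v,\4D^tw)$ together with the two characterizations gives the $\GL_3(4)$ equivalence cleanly; the exploitation of the $\Sigma_3$-coordinate symmetry and the Frobenius $b_{i1}\leftrightarrow b_{i2}$, $u_5\leftrightarrow u_6$ to trim the casework is a sensible way to organize what the paper leaves to the reader.
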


Define matrices 
	\[ M_{10} = \mxthree000010001, \quad 
	M_{20} = \mxthree100000001, \quad 
	M_{01} = \mxthree011101110, \quad 
	M_{02} = \mxthree0\omega{\4\omega}{\4\omega}0\omega\omega{\4\omega}0,\]
and set $M_{00}=0$, $M_{03}=M_{01}+M_{02}$, $M_{30}=M_{10}+M_{20}$, and 
$M_{ij}=M_{i0}+M_{0j}$ for $i,j=1,2,3$. In other words, if we set 
$\3=\{0,1,2,3\}$ and regard it as an elementary abelian $2$-group via 
bitwise sum, then $((i,j)\mapsto M_{ij})$ is a homomorphism from 
$\3\times\3$ to $M_3(\F_4)$. 

Finally, set 
	\[ N_{ij}=I+M_{ij} \qquad ((i,j)\in\3\times\3). \]
Note that 
	\beqq N_{i0}=\4{u_i}u_i^t \qquad\textup{and}\qquad 
	N_{0i}=\4{u_{i+3}}u_{i+3}^t \qquad\textup{for all $i=1,2,3$.} 
	\label{e:Ai0} \eeqq

\begin{Not} \label{n:3M22}
Define maximal isotropic subspaces $X_{ij}\le A$ (for $i,j=0,1,2,3$) and 
$Y_{ij}\le A$ (for $i=1,2,3$ and $j=1,2$) as follows: 
	\[ X_{ij} = \left\{ \left. \Coltwo{N_{ij}v}v
	\,\right|\, 
	v\in V \right\} \quad\textup{and}\quad 
	Y_{ij} = \left\{\left. \Coltwo{u}{b_{ij}\4{b_{ij}}^tu} \,\right|\, u\in 
	V \right\} . \]
Set $\scrx=\{X_{ij}\,|\,i,j=0,1,2,3\}$ and 
$\scry=\{Y_{ij}\,|\,i=1,2,3,~j=1,2\}$. Let $\Gamma\le\Aut(A)$ be the group 
of unitary automorphisms of $A$ that permute the members of 
$\scrx\cup\scry$. 
\end{Not}

The members of $\scrx\cup\scry$ are all totally isotropic since the 
matrices $N_{ij}$ and $b_{ij}\4{b_{ij}}^t$ are hermitian for all $i,j$. 
Following \cite{bensonthesis} and \cite{atlas}, we arrange them 
diagrammatically as follows:
	\beqq \renewcommand{\arraystretch}{1.5}
	\renewcommand{\arraycolsep}{2mm}
	\begin{array}{|cc|cc|cc|} \hline
	 & & X_{00} & X_{01} & X_{02} & X_{03} \\
	Y_{12} & Y_{11} & X_{10} & X_{11} & X_{12} & X_{13} \\\hline
	Y_{22} & Y_{21} & X_{20} & X_{21} & X_{22} & X_{23} \\
	Y_{32} & Y_{31} & X_{30} & X_{31} & X_{32} & X_{33} \\\hline
	\end{array} \label{e:MOG} \eeqq

\begin{Not} \label{n:3M22b}
For $M\in M_3(\F_4)$ and $D\in\GL_3(\F_4)$, 
define $\varphi_M,\psi_D\in\Aut(A)$ by setting
	\[ \varphi_M\left(\Coltwo{u}v\right) = \Mxtwo{I}C0I\Coltwo{u}v 
	\qquad\textup{and}\qquad 
	\psi_D\left(\Coltwo{u}v\right) = \Mxtwo{D}00{\4D^{-t}}\Coltwo{u}v 
	\]
where $(-)^{-t}$ means transpose inverse. Set 
	\[ D_0 = \mxthree100001010, \qquad D_1 = \mxthree100101110,\qquad
	D_2 = \mxthree011101001,\qquad D_3 = 
	\mxthree1000{\omega}000{\4\omega}; \]
and set 
	\beq \mu_{ij}=\varphi_{M_{ij}} \qquad\textup{and}\qquad 
	\delta_i=\psi_{D_i} \tag{for $i,j=0,1,2,3$}. \eeq
Also, define the following subgroups of $\Aut_{\F_4}(A)$ (in fact, of 
$\Gamma$): 
	\begin{align*} 
	H &= N_\Gamma(\scry) = N_\Gamma(\scrx) 
	& P_1 &= \{\mu_{ij}\,|\,i,j=0,1,2,3\} \\
	H_0 &= C_\Gamma(\scry) 
	& P_2 &= \gen{\mu_{10},\mu_{01},\delta_0,\delta_1} \\
	\Gamma_0 &= C_\Gamma(\scrx\cup\scry) 
	& T &= P_1P_2\gen{\delta_2} = P_1\gen{\delta_0,\delta_1,\delta_2} . 
	\end{align*}
\end{Not}

Note that $\varphi_M$ is unitary whenever $\4M^t=M$, and $\psi_D$ is 
unitary for all $D\in\GL_3(4)$. In particular, the $\mu_{ij}$ and the 
$\delta_i$ are all unitary. 

Most of the information about $\Gamma$ and its action on $A$ in the following 
lemma is well known and implicit in Chapter 2 of \cite{bensonthesis}, but 
we try here to make more explicit some of the details in the proofs.

\begin{Lem} \label{l:3M22c}
Set $A_0=\left\{\left.\coltwo{w}0\,\right|\,w\in V\right\}$. 
Set $\Delta=\gen{D_0,D_1,D_2,D_3}\le\GL_3(4)$, and set 
$\psi_\Delta=\gen{\delta_0,\delta_1,\delta_2,\delta_3} 
=\{\psi_D\,|\,D\in\Delta\}\le\Aut(A)$. Then 
\begin{enuma} 

\item $\Gamma\cong3M_{22}$ and $T\in\syl2\Gamma$; 

\item $\Delta\cong\psi_\Delta\cong3A_6$; 

\item $H_0=P_1\times\Gamma_0$ where 
$P_1=\bigl\{\varphi\in\Gamma\,\big|\,\varphi|_{A_0}=\Id\bigr\} \cong E_{16}$ and 
$\Gamma_0=\gen{\omega\cdot\Id_A}$; and 

\item $H=\{\varphi\in\Gamma\,|\,\varphi(A_0)=A_0\} = P_1\psi_\Delta$.

\end{enuma}
\end{Lem}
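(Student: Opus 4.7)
The plan is to verify the four parts in the order (c), (d), (b), (a). The core tool throughout is the block decomposition of a unitary automorphism with respect to the maximal isotropic $A_0$: if $\varphi\in\Aut(A)$ is unitary and preserves $A_0$, then $\varphi=\psi_D\cdot\varphi_C$ uniquely with $D\in\GL_3(\F_4)$ and hermitian $C\in M_3(\F_4)$, and $\varphi|_{A_0}=\Id$ exactly when $D=I$.

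For part (c), the inclusion $P_1\Gamma_0\le H_0$ comes down to three checks: each $\mu_{ij}$ fixes $\coltwo{w}{0}$ pointwise (immediate); $\mu_{ij}(Y_{kl})=Y_{kl}$ for all $(i,j),(k,l)$, which amounts to the scalar identities $\4{b_{kl}}^t M_{ij}b_{kl}=0$; and $\omega\Id_A$ visibly stabilizes every subspace. For the reverse inclusion, $\varphi\in H_0$ fixes each intersection $Y_{ij}\cap Y_{kl}$, these are $1$-dimensional $\F_4$-subspaces of $A_0$, and as $(i,j),(k,l)$ vary they span $A_0$; so $\varphi|_{A_0}$ must be an $\F_4$-scalar, which by unitarity lies in $\{1,\omega,\4\omega\}$. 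After multiplying by a suitable element of $\Gamma_0$ we may assume $\varphi|_{A_0}=\Id$, so $\varphi=\varphi_C$ with $C$ hermitian; the condition $\varphi_C(X_{00})\in\scrx$ then forces $I+C=N_{ij}$ for some $(i,j)$, i.e.\ $C=M_{ij}$. Part (d) runs in parallel: any $\varphi\in H$ has block form $\varphi=\psi_D\varphi_C$, and $\varphi\in\Gamma$ iff both $\varphi_C\in\Gamma$ (so $\varphi_C\in P_1$ by (c)) and $\psi_D\in\Gamma$; the latter condition is equivalent, via the analysis of $\psi_D(Y_{kl})\in\scry$ and Lemma \ref{l:B&U}, to $D$ permuting $\scru$, i.e.\ $D\in\Delta$.

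For part (b), $\Delta$ is by definition the stabilizer in $\GL_3(\F_4)$ of the set $\scru$. The kernel of the induced action on $\scru$ consists of scalars fixing every line, namely $Z(\GL_3(\F_4))\cong C_3$ generated by $\omega\Id$. Computing the permutations of $\scru$ induced by $D_0,D_1,D_2,D_3$ shows the image in $\Sigma_6$ is $A_6$, and exhibiting an element of order $9$ in $\Delta$ lying over a $3$-cycle in $A_6$ rules out the split extension, giving $\Delta\cong\psi_\Delta\cong 3A_6$. For part (a), (c) and (d) give $|H|=|P_1|\cdot|\psi_\Delta|=16\cdot 1080=17280$. A further orbit–stabilizer argument — either on the $\Gamma$-orbit of $A_0$, which turns out to have size $77$, or on the orbit of $X_{00}$ inside $\scrx\cup\scry$ — yields $|\Gamma|=1330560=|3M_{22}|$. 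For the abstract identification, the action of $\Gamma/\Gamma_0$ on $\scrx\cup\scry$ is $2$-transitive with point-stabilizer of shape $2^4{:}A_6$, and the classical uniqueness of $M_{22}$ among simple groups of order $443520$ with such an action yields $\Gamma/\Gamma_0\cong M_{22}$ and hence $\Gamma\cong 3M_{22}$. Finally $T\in\syl2\Gamma$ follows from $|T|=|P_1|\cdot|\gen{\delta_0,\delta_1,\delta_2}|=16\cdot 8=128=|\Gamma|_2$.

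The main obstacle will be the detailed bookkeeping inside part (c), in particular the family of identities $\4{b_{kl}}^t M_{ij}b_{kl}=0$ and the span argument for the pairwise intersections $Y_{ij}\cap Y_{kl}$ inside $A_0$; both reduce to finite computations in $\F_4$, but are lengthy (the additive structure on $(i,j)\in\3\times\3$ helps collapse them somewhat). An alternative shortcut for part (a), bypassing the direct order count, is to invoke the explicit construction of the $6$-dimensional $\F_4(3M_{22})$-module in \cite[Ch.~2]{bensonthesis}, which supplies the isomorphism $\Gamma\cong 3M_{22}$ more directly.
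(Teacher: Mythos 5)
Your treatment of parts (c) and (d) runs along exactly the same lines as the paper's proof: the reduction to the scalar identities $\4{b_{k\ell}}^t M_{ij}b_{k\ell}=0$ via Lemma \ref{l:B&U}, the observation that $\mu_{ij}(X_{k\ell})=X_{k+i,\ell+j}$, the span argument for the pairwise intersections $Y\cap Y'$ inside $A_0$, and the block decomposition of any $\eta\in H$ as $\psi_D\cdot\varphi_C$ with $D$ permuting $\scru$. The overall structure (prove $P_1$ and $\Gamma_0$ generate $H_0$, then use the block form to get $H=P_1\psi_\Delta$) is the paper's.

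For part (b), your identification of the image of $\Delta$ in $\Sigma_6$ as $A_6$ and of the kernel as $Z(\GL_3(\F_4))\cong C_3$ is fine, but the proposed device for ruling out the split extension --- ``exhibiting an element of order $9$ in $\Delta$ lying over a $3$-cycle'' --- cannot work: $3A_6$ has no elements of order $9$ (its Sylow $3$-subgroup is the extraspecial group $3^{1+2}$ of exponent $3$, the one appearing inside $\SL_3(4)$ as the normalizer of a maximal torus). Indeed $D_3$ and the $D_4$ inducing $(1\,2\,3)$ both have order $3$, as you can check directly. What actually proves nonsplitness in the paper is exhibiting $\omega\cdot I$ as a commutator $[D_3,D_4]\in[\Delta,\Delta]$; together with perfectness of $A_6$ this shows $\Delta$ is perfect, hence is the unique perfect central extension of $A_6$ by $C_3$.

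Part (a) is where your route diverges the most, and it has a gap. Your orbit--stabilizer argument to compute $|\Gamma|$ directly is circular: to know that the $\Gamma$-orbit of $A_0$ ``turns out to have size $77$'' you would already need $|\Gamma|$, and the orbit of $X_{00}$ is just all of $\scrx\cup\scry$ (size $22$), whose stabilizer you do not yet know. You also misidentify the point-stabilizer of the $22$-point action as having shape $2^4{:}A_6$; that shape belongs to $H/\Gamma_0$, the stabilizer of $A_0$ (equivalently, of the partition $\scrx\sqcup\scry$, a hexad), whereas the stabilizer of a single point $X\in\scrx\cup\scry$ maps injectively into $\PGL_3(4)$ (since any $\varphi$ fixing $X$ pointwise lies in $\Gamma_0$). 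The paper sidesteps any direct order computation: it cites $3$-transitivity from Benson, uses the injection of the point-stabilizer into $\PGL_3(4)$ to get the \emph{divisibility} $|\Gamma/\Gamma_0|\mid 22\cdot|\PGL_3(4)|=2^7\cdot3^3\cdot5\cdot7\cdot11$, and then invokes the classification of $2$-transitive groups of degree $22$ (only $M_{22}$, $A_{22}$, and their automorphism groups), from which $M_{22}$ is forced by the $2$-part bound. If you want to avoid that classification, a cleaner fix would be to show directly that the point-stabilizer modulo $\Gamma_0$ equals $\PSL_3(4)$ (not just embeds), which again requires more than you have written.
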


\begin{proof} For each $i=1,2,3$ and $j=1,2$, 
	\beqq Y_{ij}\cap A_0 = \{\coltwo{u}0 \,|\, u\in V,~ \4{b_{ij}}^tu=0 
	\} = \{\coltwo{u}0 \,|\, u\in b_{ij}^\perp \} \label{e:Yij.A0} 
	\eeqq
in the notation of Lemma \ref{l:B&U}. Thus 
$\dim_{\F_4}(Y\cap A_0)=2$ for $Y\in\scry$, and distinct members of 
$\scry$ have distinct intersections with $A_0$. So for each pair $Y\ne Y'$ in 
$\scry$, we have $Y\cap Y'\le A_0$ where $\dim(Y\cap Y')=1$, and the set of 
all such intersections generates $A_0$. 

Thus each $\varphi\in H$ sends $A_0$ to itself. If $\varphi\in H_0$, then 
$\varphi$ sends each of the 1-dimensional subspaces $Y\cap Y'$ to itself 
(for $Y\ne Y'$ in $\scry$), and hence 
$\varphi|_{A_0}\in\gen{\omega\cdot\Id_{A_0}}$. 

By definition, $X\cap A_0=0$ for each $X\in\scrx$. So if $\varphi\in\Gamma$ 
is such that $\varphi(A_0)=A_0$, then $\varphi$ permutes the members of 
$\scrx$ and those of $\scry$, and hence lies in $H$. If $\varphi|_{A_0}\in 
\gen{\omega\cdot\Id_{A_0}}$, then since the intersections $Y\cap A_0$ for 
$Y\in\scry$ are all distinct, $\varphi$ sends each member of $\scry$ to 
itself and hence lies in $H_0$. To summarize, we have now shown that 
	\beqq H = \bigl\{\varphi\in\Gamma\,\big|\,\varphi(A_0)=A_0 \bigr\} 
	\qquad\textup{and}\qquad
	H_0 = \bigl\{\varphi\in\Gamma\,\big|\,
	\varphi|_{A_0}\in\gen{\omega\cdot\Id_{A_0}} \bigr\}. \label{e:H,H0} 
	\eeqq

\smallskip

\noindent\textbf{(b) } Each of the matrices $D_i$ for $i=0,1,2,3$ permutes the 
members of $\scru=\{u_i\,|\,1\le i\le6\}$, and does so via the permutations 
	\beqq D_0:~(2\,3)(5\,6), \qquad D_1:~(1\,4)(2\,3), \qquad 
	D_2:~(1\,2)(3\,4), \qquad D_3:~(4\,5\,6). \label{e:Di-perm} \eeqq
These generate the group of all even permutations of the set $\scru$. In 
particular, there is a matrix $D_4\in\Delta$ that induces the permutation 
$(1\,2\,3)$, and by considering its action on the $u_i$ for $1\le i\le4$, 
we see that $D_4=\mxthree00rr000r0$ for some $r\in\F_4^\times$. 

We claim that 
	\beqq \Delta = \{D\in\GL_3(4) \,|\, D(\scru)=\scru \}. 
	\label{e:Delta(U)} \eeqq
To see this, assume $D\in\GL_3(4)$ permutes the $\gen{u_i}$. Since all even 
permutations of $\scru$ are realized by elements in $\Delta$, there is 
$D'\equiv D$ (mod $\Delta$) that 
sends each of the subspaces $\gen{u_1}$, $\gen{u_2}$, 
$\gen{u_3}$, $\gen{u_4}$ to itself. But then $D'$ must have the form 
$s\cdot I$ for $s\in\F_4^\times$. Since 
	\[ \mxthree{\omega}000\omega000\omega = 
	\left[ \mxthree1000{\omega}000{\4\omega},\mxthree00rr000r0\right] 
	\in [\Delta,\Delta], \]
this proves \eqref{e:Delta(U)}, and also shows that $\Delta\cong3A_6$. 

The isomorphism $\psi_\Delta\cong\Delta$ follows directly from the 
definitions.

\smallskip

\noindent\textbf{(c) } We first check, for each $i,j=0,1,2,3$, $k=1,2,3$, 
and $\ell=1,2$, that $\mu_{ij}(Y_{k\ell})=Y_{k\ell}$. This means 
showing, for $u\in V$, that 
	\[ b_{k\ell}\4{b_{k\ell}}^tu = 
	b_{k\ell}\4{b_{k\ell}}^t(u+M_{ij}b_{k\ell}\4{b_{k\ell}}^tu); \]
i.e., that $\4{b_{k\ell}}^tM_{ij}b_{k\ell}=0$. It suffices to do this when 
$ij=0$ and $(i,j)\ne(0,0)$. In all such cases, by \eqref{e:Ai0}, there is 
$\gen{c_{ij}}\in\scru$ such that $c_{ij}\4{c_{ij}}^t=I+M_{ij}$. So it 
suffices to show that 
	\[ (\4{b_{k\ell}}^tc_{ij})\cdot\4{(\4{b_{k\ell}}^tc_{ij})} = 
	\4{b_{k\ell}}^tb_{k\ell} = 1; \]
equivalently, that $b_{k\ell}\not\perp c_{ij}$ --- which follows from Lemma 
\ref{l:B&U}. 

For the same automorphism $\mu_{ij}$ with matrix $\mxtwo{I}{M_{ij}}0I$, 
an element $\coltwo{N_{k\ell}u}u\in X_{k\ell}$ is sent to 
$\coltwo{N_{k\ell}u+M_{ij}u}u$. Since $N_{k\ell}+M_{ij}=N_{k+i,\ell+j}$ 
where sums of indices are taken bitwise, this shows that 
$\mu_{ij}(X_{k\ell})=X_{k+i,\ell+j}$. So $\mu_{ij}$ permutes the 
members of $\scrx$, finishing the proof that $\mu_{ij}\in H_0\le\Gamma$.

Conversely, for each $\varphi\in\Gamma$ such that $\varphi|_{A_0}=\Id$, 
$\varphi$ induces the identity on $A/A_0$ since it is unitary and $A_0$ is 
a maximal isotropic subgroup, so $\varphi$ has matrix $\mxtwo{I}M0I$ for 
some $M\in M_3(\F_4)$. Thus $\varphi=\varphi_M$ (see Notation 
\ref{n:3M22b}). Let $(i,j)$ be such that $\varphi(X_{00})=X_{ij}$; then 
$N_{00}+M=I+M=N_{ij}$, so $M=M_{ij}$, and $\varphi=\mu_{ij}\in P_1$. We 
now conclude that 
	\[ P_1 = \bigl\{\varphi\in\Gamma\,\big|\, \varphi|_{A_0}=\Id 
	\bigr\}. \]

By \eqref{e:H,H0}, $\varphi\in H_0$ implies that 
$\varphi|_{A_0}\in\gen{\omega\cdot\Id_{A_0}}$, and hence that $\varphi\in 
P_1\times\gen{\omega\cdot\Id_A}$. Thus $H_0\le 
P_1\times\gen{\omega\cdot\Id_A}$, and we already proved the opposite 
inclusion. Also, $\gen{\omega\cdot\Id_A}\le\Gamma_0\le H_0$, and 
$\Gamma_0\cap P_1=1$ since $P_1$ acts faithfully on $\scrx$. So 
$\Gamma_0=\gen{\omega\cdot\Id_A}$.

\smallskip

\noindent\textbf{(d) } Fix $D\in\Delta$; we will show that $\psi_D\in H$. 
Let $\rho_D\:M_3(\F_4)\too M_3(\F_4)$ be the homomorphism 
$\rho_D(M)=DM\4D^t$. Since $D$ permutes the members of $\scru$ by 
\eqref{e:Di-perm}, $\rho_D$ permutes the set
	\[ \{u\4u^t\,|\,\gen{u}\in\scru\} = 
	\{N_{10},N_{20},N_{30},N_{01},N_{02},N_{03}\} \]
(see \eqref{e:Ai0}). This, together with the relations 
$N_{ij}+N_{k\ell}+N_{mn}=N_{i+k+m,j+\ell+n}$ (where indices are added 
bitwise) shows that $\rho_D$ permutes the set of all $N_{ij}$ for 
$i,j=0,1,2,3$. (Note, for example, that $N_{00}=N_{10}+N_{20}+N_{30}$.) 
If $i,j,k,\ell$ are such that $\rho_D(N_{ij})=N_{k\ell}$, then 
	\[ \psi_D(X_{ij}) 
	= \left\{ \left. \Coltwo{DN_{ij}u}{\4D^{-t}u} 
	\,\right|\, u\in V \right\} 
	= \left\{ \left. \Coltwo{DN_{ij}\4D^tv}v \,\right|\, v\in V 
	\right\} = X_{k\ell}, \]
and thus $\psi_D$ permutes the members of $\scrx$.

By Lemma \ref{l:B&U} and since $D$ permutes the members of $\scru$, the 
matrix $\4D^t$ permutes the members of $\scrb$. So for each $i,j$ there is 
$k,\ell$ such that $\4D^tb_{k\ell}\in\gen{b_{ij}}$, and hence 
	\begin{multline*} 
	\psi_D(Y_{ij}) 
	= \left\{ \left. \Coltwo{Du}{\4D^{-t}b_{ij}\4{b_{ij}}^tu} 
	\,\right|\, u\in V \right\} 
	= \left\{ \left. \Coltwo{v}{\4D^{-t}b_{ij}\4{b_{ij}}^tD^{-1}v} 
	\,\right|\, v\in V \right\} \\[1mm]
	= \left\{ \left. \Coltwo{v}{b_{k\ell}\4{b_{k\ell}}^tv} 
	\,\right|\, v\in V \right\} = Y_{k\ell}.
	\end{multline*}
Thus $\psi_D$ also permutes the members of $\scry$, and it follows that 
$\psi_D\in H$. 

Conversely, for each $\eta\in H$, $\eta(A_0)=A_0$ by \eqref{e:H,H0}, and 
$\eta|_{A_0}$ permutes the subspaces $Y\cap A_0$ for $Y\in\scry$. So $\eta$ 
has matrix of the form $\mxtwo{D}X0{\4D^t}$, where $D$ permutes the 
subspaces $b^\perp\le V$ for all $\gen{b}\in\scrb$ by \eqref{e:Yij.A0}, and 
hence permutes the members of $\scru$ by Lemma \ref{l:B&U}. So by 
\eqref{e:Delta(U)}, there is $\delta\in\psi_\Delta$ such that 
$\eta|_{A_0}=\delta|_{A_0}$. Then $\delta^{-1}\eta\in P_1$ by (c), and 
$\eta\in P_1\psi_\Delta$. This finishes the proof that $H=P_1\psi_\Delta$. 

\smallskip

\noindent\textbf{(a) } Set $\Gamma^*=\Gamma/\Gamma_0$, regarded as a group 
of permutations of the set $\scrx\cup\scry$. By \cite[Theorem 
2.3]{bensonthesis}, $\Gamma^*$ acts 3-transitively on the set 
$\scrx\cup\scry$. It is well known (see, e.g., \cite[p. 235]{Pogorelov}) 
that the only finite groups that act $2$-transitively on a set of order 
$22$ are $M_{22}$, $A_{22}$, and their automorphism groups. So once we have 
shown that $T\in\syl2\Gamma$ and $|T|=2^7$, it will then follow that 
$\Gamma^*\cong M_{22}$, and that $\Gamma$ is a central extension of 
$\Gamma_0\cong C_3$ by $\Gamma^*$. 

Recall that $T=P_1\gen{\delta_0,\delta_1,\delta_2}$, where by 
\eqref{e:Di-perm}, the action of the $\delta_i$ on $\scry$ generates a 
subgroup of $\Sigma_6$ isomorphic to $D_8$. Hence $T/P_1\cong D_8$, and 
$|T|=2^7$. Alternatively, one can describe $T$ by looking at the subgroup 
of $\Aut(A_0)$ generated by restrictions of its elements.

Under the action of $\Gamma^*$, the stabilizer of a subspace 
$X\in\scrx\cup\scry$ acts $\F_4$-linearly on $X$. If $\varphi\in\Gamma$ is 
such that $\varphi|_X=\Id_X$, then $\varphi$ sends each member of 
$\scrx\cup\scry$ to itself since their intersections with $X$ are distinct, 
and hence $\varphi\in\Gamma_0$. The point stabilizer for the action of 
$\Gamma^*$ on $\scrx\cup\scry$ is thus isomorphic to a subgroup of $\PGL_3(4)$, 
and hence the order of $\Gamma^*$ divides 
$22\cdot|\PGL_3(4)|=2^7\cdot3^3\cdot5\cdot7\cdot11=3\cdot|M_{22}|$. So 
$T\in\syl2\Gamma$ and $\Gamma^*\cong M_{22}$. Finally, $\Gamma$ is the 
nonsplit central extension of $\Gamma_0\cong C_3$ by $\Gamma^*$ since it 
contains $\psi_D\cong3A_6$ by (b,d). 
\end{proof}

Thus $P_1=O_2(H)$, where $H\cong E_{16}\rtimes3A_6$ is a hexad subgroup of 
$\Gamma\cong3M_{22}$. One can also show that $P_2=O_2(K)$ where 
$K=N_\Gamma(\{Y_{11},Y_{12}\})$ is a duad subgroup of $\Gamma$. 
Equivalently, $K\cong C_3\times(E_{16}\rtimes\Sigma_5)$ is the group of 
elements of $\Gamma$ that permute the five $2\times2$ blocks in diagram 
\eqref{e:MOG}; i.e., send the four members of each such block to those in 
another block.

The next lemma collects some technical properties of the action of 
$\Gamma$ on $A$. 

\begin{Lem} \label{l:3M22b}
Let $\{e_1,e_2,\dots,e_6\}$ be the canonical basis for $A=\F_4^6$. Then for 
$P_1,P_2,T\le\Gamma$ and $\mu_{10}\in P_1\cap P_2$ as defined in 
Notation \ref{n:3M22b}, 
\begin{enuma} 
\item $P_1$ and $P_2$ are the only subgroups of $T$ isomorphic to $E_{16}$; 
and 
\item $C_A(\mu_{10})=\gen{e_1,e_2,e_3,e_4}$, 
$[\mu_{10},A]=\gen{e_2,e_3}$, $C_A(P_1)=\gen{e_1,e_2,e_3}$, 
$C_A(P_2)=\gen{e_2+e_3}$.
\end{enuma}
\end{Lem}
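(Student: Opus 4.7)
The plan is to dispatch (b) first as a direct matrix computation, then handle (a) by analyzing the image of a candidate $E\cong E_{16}$ in the quotient $T/P_1\cong D_8$.

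For part (b), I would work in the basis $e_1,\dots,e_6$ and use the explicit matrix formulas. Since $\mu_{10}(\binom{u}{v})=\binom{u+M_{10}v}{v}$ with $M_{10}=\mathrm{diag}(0,1,1)$, a direct check shows $\mu_{10}$ fixes $e_1,e_2,e_3,e_4$, while $\mu_{10}(e_5)=e_5+e_2$ and $\mu_{10}(e_6)=e_6+e_3$; this gives both $C_A(\mu_{10})=\gen{e_1,e_2,e_3,e_4}$ and $[\mu_{10},A]=\gen{e_2,e_3}$. For $C_A(P_1)$, each $\varphi_M$ fixes $\binom{u}{v}$ iff $Mv=0$, and since $\bigcap_{i,j}\ker(M_{ij})=0$ (already $\ker(M_{10})\cap\ker(M_{20})\cap\ker(M_{30})=0$), one gets $C_A(P_1)=A_0=\gen{e_1,e_2,e_3}$. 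Finally, $\delta_0=\psi_{D_0}$ acts on $A$ by swapping $e_2\leftrightarrow e_3$ and $e_5\leftrightarrow e_6$ (since $D_0^t=D_0$), so $C_A(\mu_{10},\mu_{01},\delta_0)=\gen{e_1,e_2+e_3}$. A quick calculation with $D_1$ shows $\delta_1(e_1)=e_1+e_2+e_3$ and $\delta_1$ swaps $e_2\leftrightarrow e_3$, ruling out $e_1$, whence $C_A(P_2)=\gen{e_2+e_3}$.

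For part (a), the key structural input is that $P_1\trianglelefteq T$ (being $O_2(H_0)$ by (c)) and $T/P_1\cong D_8$ with generators $\bar\delta_0,\bar\delta_2$ of order $2$ satisfying $\bar\delta_1=(\bar\delta_0\bar\delta_2)^2$; this follows by computing the permutation actions of the $\delta_i$ on $\scry$ given in \eqref{e:Di-perm}. Now let $E\le T$ with $E\cong E_{16}$ and set $\bar E=EP_1/P_1$, an elementary abelian subgroup of $D_8$. The conjugation formula $\psi_D\varphi_M\psi_D^{-1}=\varphi_{DM\bar D^t}$ reduces the action of any $e\in E\setminus P_1$ on $P_1$ to the action of its image in $T/P_1$, so $E\cap P_1\le C_{P_1}(\bar E)$.

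The case analysis then runs as follows. If $\bar E=1$ then $E\le P_1$ and equals $P_1$ by order. If $|\bar E|=2$, then $|E\cap P_1|=8$, so $|C_{P_1}(\bar g)|\ge 8$ for the nontrivial element $\bar g\in\bar E$. A direct calculation of $D_iM_{j\ell}\bar D_i^t$ for $i=0,1,2$ shows $C_{P_1}(\delta_0)=\gen{\mu_{10},\mu_{01}}$, $C_{P_1}(\delta_1)=\gen{\mu_{10},\mu_{01}}$, and $C_{P_1}(\delta_2)=\gen{\mu_{30},\mu_{01}}$, each of order $4$; since these represent the three $D_8$-classes of involutions, no element of $D_8^\#$ centralizes an index-$2$ subgroup of $P_1$, ruling out this case. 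Finally, if $|\bar E|=4$ then $\bar E$ is one of the two Klein four subgroups of $D_8$, namely $\gen{\bar\delta_0,\bar\delta_1}$ or $\gen{\bar\delta_1,\bar\delta_2}$. For the second, $C_{P_1}(\delta_1)\cap C_{P_1}(\delta_2)=\gen{\mu_{10},\mu_{01}}\cap\gen{\mu_{30},\mu_{01}}=\gen{\mu_{01}}$ has order $2$, forcing $|E|\le 8$, a contradiction. For the first, $C_{P_1}(\delta_0)\cap C_{P_1}(\delta_1)=\gen{\mu_{10},\mu_{01}}$, forcing $E\cap P_1=\gen{\mu_{10},\mu_{01}}$; any lift $e\in E$ of $\bar\delta_0$ has the form $\mu\delta_0$ with $(\mu\delta_0)^2=\mu\cdot(\delta_0\mu\delta_0^{-1})=\mu^2=1$ iff $\mu\in C_{P_1}(\delta_0)=E\cap P_1$, so we may replace $e$ by $\delta_0$; similarly for $\bar\delta_1$. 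Hence $E=\gen{\mu_{10},\mu_{01},\delta_0,\delta_1}=P_2$.

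The main obstacle is the bookkeeping in computing $C_{P_1}(\delta_i)$ for $i=0,1,2$, i.e.\ carrying out the nine conjugation calculations $D_iM_{j\ell}\bar D_i^t$ (three for each of the four $\F_2$-generators $\mu_{10},\mu_{20},\mu_{01},\mu_{02}$ of $P_1$) needed to fix bases for all three centralizers. Everything else is routine: the reduction via $\bar E=EP_1/P_1$, the Klein-four classification in $D_8$, and the lifting argument from $\bar E$ to $E$ using the squaring formula in a semidirect product.
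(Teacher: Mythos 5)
Your proof is correct, and it follows the same underlying strategy as the paper: reduce modulo $P_1$ to $T/P_1\cong D_8$, bound $E\cap P_1$ by centralizers of the $\bar\delta_i$ in $P_1$, and run a case analysis on $\bar E\le D_8$. The difference is one of self-containedness: the paper disposes of (a) with a bare reference to Lemma \ref{l:hexad.grp}, which is the statement of the identical fact for the (isomorphic) Sylow $2$-subgroup of $M_{22}$ presented via $\trs{h}$ and $\ttt_{ij}$ --- so the reader is implicitly being asked to transport the argument across the unwritten isomorphism, or to run the same sketch again in the new coordinates. You instead carry out the computation from scratch inside $3M_{22}$'s $6$-dimensional module, computing $D_iM_{j\ell}\4D_i^t$ directly; this is more work but makes no appeal to the $M_{22}$ case. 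For (b) both proofs are the same direct matrix check.

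Two small points worth tidying. First, the displayed chain $(\mu\delta_0)^2=\mu\cdot(\delta_0\mu\delta_0^{-1})=\mu^2$ is off: the second equality holds only \emph{after} you know $\mu\in C_{P_1}(\delta_0)$, which is what you are trying to deduce. What you want is: $(\mu\delta_0)^2=\mu\cdot\delta_0\mu\delta_0^{-1}$ (using $\delta_0^2=1$), and since $P_1$ is elementary abelian this equals $1$ iff $\delta_0\mu\delta_0^{-1}=\mu^{-1}=\mu$, i.e.\ iff $\mu\in C_{P_1}(\delta_0)$. The conclusion $E\le P_2$ then follows without any ``replacing'' of lifts, since $C_{P_1}(\delta_0)=C_{P_1}(\delta_1)=\gen{\mu_{10},\mu_{01}}\le P_2$ already forces both lifts into $P_2$. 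Second, \eqref{e:Di-perm} records the permutation action of the $D_i$ on $\scru$, not on $\scry$; but since $\psi_\Delta\cap P_1=1$ and the action of a $2$-group on a $6$-set is faithful modulo the odd center, either action identifies $\gen{\bar\delta_0,\bar\delta_1,\bar\delta_2}$ with the same $D_8\le\Sigma_6$, so the relation $\bar\delta_1=(\bar\delta_0\bar\delta_2)^2$ is unaffected. Neither slip affects the substance of the argument.
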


\begin{proof} For point (a), see Lemma \ref{l:hexad.grp}. Point (b) 
follows easily from the above descriptions of the actions. 
\end{proof}


\end{document}